\newcommand{\Hmm}[1]{\leavevmode{\marginpar{\tiny%
$\hbox to 0mm{\hspace*{-0.5mm}$\leftarrow$\hss}%
\vcenter{\vrule depth 0.1mm height 0.1mm width \the\marginparwidth}%
\hbox to 0mm{\hss$\rightarrow$\hspace*{-0.5mm}}$\\\relax\raggedright #1}}}
\newcommand{\Cc}{\mathcal{C}}
\newcommand{\cC}{\mathcal{C}}
\newcommand{\Dc}{\mathcal{D}}
\newcommand{\Ec}{\mathcal{E}}
\newcommand{\Vc}{\mathcal{V}}
\def \rmi{{\rm i}}
\newcommand{\Er}{\mathscr{E}}
\newcommand{\Fr}{\mathscr{F}}
\newcommand{\Gr}{\mathscr{G}}
\newcommand{\Kr}{\mathscr{K}}
\newcommand{\Tr}{\mathscr{T}}
\newcommand{\Ur}{\mathscr{U}}
\newcommand{\Vr}{\mathscr{V}}
\newcommand{\Wr}{\mathscr{W}}
\newcommand{\ve}{\varepsilon}
\def \bone{\mathbf{1}}
\newtheorem{thm}{Theorem}[section]
\newtheorem{cor}[thm]{Corollary}
\newtheorem{lem}[thm]{Lemma}
\newtheorem{lemma}[thm]{Lemma}
\newtheorem{pro}[thm]{Proposition}
\theoremstyle{definition}
\newtheorem*{defi}{Definition}
\newtheorem{rem}[thm]{Remark}
\numberwithin{equation}{section}
\newcommand{\Z}{{\mathbb Z}}
\newcommand{\R}{{\mathbb R}}
\newcommand{\C}{{\mathbb C}}
\newcommand{\N}{{\mathbb N}}
\newcommand{\al}{{\alpha}}
\newcommand{\be}{{\beta}}
\newcommand{\de}{{\delta}}
\newcommand{\eps}{{\varepsilon}}
\newcommand{\gm}{{\gamma}}
\newcommand{\Om}{{\Omega}}
\newcommand{\ka}{{\kappa}}
\newcommand{\si}{{\sigma}}
\newcommand{\lm}{{\lambda}}
\newcommand{\ph}{{\varphi}}
\newcommand{\ov}[1]{\overline{ #1}}
\newcommand{\Hm}[1]{\leavevmode{\marginpar{\tiny%
$\hbox to 0mm{\hspace*{-0.5mm}$\leftarrow$\hss}%
\vcenter{\vrule depth 0.1mm height 0.1mm width \the\marginparwidth}%
\hbox to 0mm{\hss$\rightarrow$\hspace*{-0.5mm}}$\\\relax\raggedright
#1}}}
\begin{document}
\title[Eigenvalue asymptotics for Schr\"odinger operators on sparse graphs]{Eigenvalue asymptotics  for Schr\"odinger operators on sparse graphs}

\author[M. Bonnefont]{Michel Bonnefont} \address{Michel Bonnefont, Institut de   Math\'ematiques de Bordeaux Universit\'e Bordeaux
351, cours de la Lib\'eration
F-33405 Talence cedex, France} \email{michel.bonnefont@math.u-bordeaux.fr}

\author[S. Gol\'enia]{Sylvain Gol\'enia} \address{Sylvain Gol\'enia, Institut de
  Math\'ematiques de Bordeaux Universit\'e Bordeaux
351, cours de la Lib\'eration
F-33405 Talence cedex, France} \email{sylvain.golenia@math.u-bordeaux.fr}

\author[M. Keller]{Matthias Keller}
\address{Matthias Keller, Friedrich Schiller Universit\"at Jena, Mathematisches Institut, 07745 Jena, Germany} \email{m.keller@uni-jena.de}

\subjclass[2000]{47A10, 34L20,05C63, 47B25, 47A63}
\keywords{discrete Laplacian, locally finite graphs, eigenvalues, asymptotic, planarity, sparse, functional inequality}

\date{\today}

\begin{abstract} \noindent
We consider Schr\"odinger operators on sparse graphs. The geometric definition of sparseness turn out to be equivalent to a functional inequality for the Laplacian. In consequence, sparseness has in turn strong spectral and functional analytic consequences. Specifically, one consequence is that it allows to completely describe the form domain. Moreover, as another consequence it leads to a characterization for discreteness of the spectrum. In this case we determine the first order of the corresponding eigenvalue asymptotics.
\end{abstract}

\maketitle

\section{Introduction}
The spectral theory of discrete Laplacians on finite or infinite graphs has drawn a lot of attention for decades. One important aspect  is to understand the relations between the geometry of the graph and the spectrum of the Laplacian.
 Often a particular focus lies on the study of the bottom of the spectrum and the eigenvalues below the essential spectrum.

Certainly the most well-known estimates for the bottom of the spectrum of Laplacians on infinite graphs are so called isoperimetric estimates or Cheeger inequalities. Starting with \cite{D1} in the case of infinite graphs, these inequalities were intensively studied and resulted in huge body of literature, where we here mention only \cite{BHJ, BKW,  D2, DK,  Fu,M1,M2, K1, KL2,Woj1}. In certain more specific geometric situations the bottom of the spectrum might be estimated in terms of curvature, see \cite{BJL,H,JL, K1,K2,KP, LY, Woe}.  There are
various other more recent approaches such as Hardy inequalities in \cite{Gol2} and summability criteria involving the boundary and volume of balls  in \cite{KLW}.

In this work we focus on sparse graphs to study discreteness of spectrum and eigenvalue asymptotics.   In a moral sense, the term sparse means  that there are not `too many' edges, however, throughout the years various different definitions were investigated. We mention here       \cite{EGS,Lo} as seminal works which are closely related to our definitions. As it is impossible to give a   complete discussion of the development, we refer to some selected more recent works such as    \cite{AABL,B,LS,M2} and references therein which also illustrates the great variety of possible definitions. Here, we discuss three notions of sparseness that result in a hierarchy of very general classes of graphs.

Let us highlight the work of Mohar \cite{M3}, where large eigenvalues of the adjacency
matrix on finite graphs are studied. Although our situation of
infinite graphs with unbounded geometry requires fundamentally
different techniques -- functional analytic
rather than combinatorial --  in spirit our work is certainly closely
related.

The techniques used in this paper owe on the one hand to considerations of  isoperimetric estimates as well as a scheme developed in \cite{Gol2} for the special case of trees.
In particular, we show that a notion of sparseness is  a geometric characterization for an inequality of the type
\begin{align*}
    (1-a)\deg - k\leq \Delta \leq (1+a)\deg +k
\end{align*}
for some $a\in(0,1)$, $k\ge0$ which holds in the form sense (precise definitions and details will be given below). The moral of this inequality is that the asymptotic behavior of the Laplacian $\Delta$ is controlled by the vertex degree function $\deg$ (the smaller $a$  the better the control).

Furthermore, such an inequality has very strong consequences which  follow from well-known functional analytic principles. These consequences include an explicit description of the form domain, characterization for discreteness of spectrum and eigenvalue asymptotics.

Let us set up the framework. Here, a graph $\Gr$ is a pair   $(\Vr,\Er)$,
where $\Vr$ denotes a countable set of  vertices and
$\Er:\Vr\times \Vr \to \{0,1\}$ is a symmetric function with zero diagonal determining the
edges.  We say two vertices $x$ and $y$ are \emph{adjacent} or \emph{neighbors}  whenever $\Er(x,y)=\Er(y,x)=1$. In this case,  we write $x\sim  y$ and we call $(x,y)$ and $(x,y)$ the \emph{(directed) edges} connecting $x$ and $y$. We assume that
$\Gr$ is \emph{locally finite} that is  each vertex has
only   finitely many neighbors.  For any finite set
$\Wr\subseteq \Vr$, the \emph{induced subgraph}
$\Gr_{\Wr}:=(\Wr,\Er_{\Wr})$ is defined by setting
$\Er_{\Wr}:=\Er|_{\Wr\times \Wr}$, i.e., an edge is contained in  $\Gr_{\Wr}$ if and only if both of its
vertices are in $\Wr$.

We consider the complex Hilbert space
$\ell^{2}(\Vr):=\{\ph:\Vr\to\C$ such  that $ \sum_{x\in
  \Vr}|\ph(x)|^{2}<\infty\}$ endowed with the scalar product
$\langle{\ph,\psi}\rangle:=\sum_{x\in \Vr}\ov{\ph(x)}\psi(x)$, $\ph,\psi\in\ell^{2}(\Vr)$.

For a function $g:\Vr\to\C$, we denote the operator of multiplication by $g$ on $\ell^{2}(\Vr)$ given by $\ph\mapsto g\ph$ and domain $\Dc(g):=\{\ph\in\ell^{2}(\Vr)\mid g\ph\in \ell^{2}(\Vr)\}$ with slight abuse of notation also by $g$.

Let $q:\Vr\to [0,\infty)$. We consider the Schr\"odinger operator $\Delta+q$ defined as
\begin{align*}
\Dc(\Delta+q):=\Big\{\ph\in\ell^{2}(\Vr)\mid& \Big(v\mapsto \sum_{w\sim
  v}(\ph(v)-\ph(w)) + q(v)\varphi(v)\Big)\in\ell^{2}(\Vr)\Big\}\\
    (\Delta+q)\ph(v)&:=\sum_{w\sim v}(\ph(v)-\ph(w)) + q(v) \varphi(v).
\end{align*}
The operator is non-negative and selfadjoint as it is
essentially selfadjoint on $\Cc_c(\Vr)$, the set of finitely
supported functions $\Vr\to \R$, (confer
\cite[Theorem~1.3.1]{Woj1}, \cite[Theorem~6]{KL1}). In Section~\ref{s:form} we will allow for potentials whose negative part is form bounded with bound strictly less than one. Moreover, in Section~\ref{s:magnetic} we consider also magnetic Schr\"odinger operators.

As mentioned above sparse graphs have already been introduced in various contexts with varying definitions.
In this article we also treat  various natural generalizations of the concept. In this introduction we stick to an intermediate situation.

\begin{defi}
A graph $\Gr:=(\Vr,\Er)$  is called  \emph{$k$-sparse}
 if for any finite set $\Wr\subseteq \Vr$ the induced subgraph
 $\Gr_{\Wr}:=(\Wr,\Er_{\Wr})$ satisfies
\[
2|\Er_{\Wr}| \leq k |\Wr|,
\]
where $|A|$ denotes the cardinality of a finite set $A$ and  we set \[|\Er_{\Wr}|:=\displaystyle\frac{1}{2}|\{(x,y)\in\Wr\times\Wr\mid
\Er_{\Wr}(x,y)=1\}|,\] that is we count the non-oriented
edges in $\Gr_{\Wr}$.
\end{defi}

Examples of sparse graphs are planar graphs and, in particular, trees.
We refer to  Section~\ref{s:sparse} for more examples.

For a function $g:\Vr\to \R$ and a finite set $\Wr\subseteq \Vr$, we
denote
\[g(\Wr):=\sum_{x\in\Wr}g(x).\]
 Moreover,  we define
\begin{align*}
    \liminf_{|x|\to \infty} g(x):=    \sup_{\Wr\subset \Vr\mbox{\scriptsize{ finite}}}\inf_{x\in \Vr\setminus \Wr} g(x), \qquad
    \limsup_{|x|\to \infty} g(x):=    \inf_{\Wr\subset \Vr\mbox{\scriptsize{ finite}}}\sup_{x\in \Vr\setminus \Wr} g(x).
\end{align*}
For two selfadjoint operators $T_{1},T_{2}$ on a Hilbert space and a subspace $\Dc_{0}\subseteq \Dc(T_{1})\cap \Dc(T_{2})$ we write $T_{1}\leq T_{2}$ on $\Dc_{0}$ if $\langle T_{1}\ph,\ph\rangle\leq \langle T_{2}\ph,\ph\rangle$ for all $\ph\in \Dc_{0}$. Moreover, for a selfadjoint semi-bounded operator $T$ on a Hilbert space, we
denote the eigenvalues below the essential spectrum by $\lm_{n}(T)$, $n\ge0$,  with increasing order counted with multiplicity.

The next theorem is a special case of the more general Theorem
\ref{t:form} in Section~\ref{s:form}. It  illustrates our results in
the case of sparse graphs introduced above and includes the case of
trees, \cite[Theorem~1.1]{Gol2}, as a special case. While the proof in
\cite{Gol2} uses a
Hardy inequality, we rely on some new ideas which have their roots in
isoperimetric techniques. The proof is given in Section~\ref{t:form}.

\begin{thm}\label{t:sparse} Let $\Gr:=(\Vr, \Er)$ be a $k$-sparse graph and
  $q:\Vr\to [0, \infty)$. Then, we have the following:
  \begin{itemize}
    \item [(a)] For all $0<\eps\le 1$,
 \begin{equation*}\label{e:main-bound-sparse}
  ({1-\eps})(\deg+q)-\frac{k}{2}\left(\frac{1}{\eps}- \eps\right) \le  \Delta+q\le({1+\eps})
  (\deg+q) +\frac{k}{2}\left(\frac{1}{\eps}- \eps\right),
 \end{equation*}
on $\Cc_c(\Vr)$.
    \item[(b)] $\Dc\left((\Delta+q)^{1/2}\right)=
\Dc\left((\mathrm{deg}+q)^{1/2}\right)$.
    \item[(c)] The operator $\Delta+q$ has
purely discrete spectrum if and only if $$\liminf_{|x|\to\infty}(\mathrm{deg}+q)(x)=\infty.$$ In this case, we obtain
\begin{align*}
    \liminf_{\lm\to\infty} \frac{\lm_{n}(\Delta+q)} {{\lm}_{n}(\mathrm{deg}+q)}=1.
\end{align*}
  \end{itemize}
\end{thm}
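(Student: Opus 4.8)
The plan is to prove the form inequality (a) first by combinatorial means, and then to read off (b) and (c) as functional-analytic consequences via form comparison and the min-max principle.

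For (a) I would begin with the elementary quadratic-form identity, valid for real $\varphi\in\Cc_c(\Vr)$,
\[
\langle(\Delta+q)\varphi,\varphi\rangle=\langle(\deg+q)\varphi,\varphi\rangle-S(\varphi),\qquad S(\varphi):=\sum_{(x,y):\,x\sim y}\varphi(x)\varphi(y),
\]
which follows by expanding $\langle\Delta\varphi,\varphi\rangle=\tfrac12\sum_{x\sim y}(\varphi(x)-\varphi(y))^2$ and noting that the diagonal part is exactly $\langle\deg\,\varphi,\varphi\rangle$. Since $q\ge0$, the whole statement (a) reduces to the single two-sided estimate
\[
|S(\varphi)|\le\eps\,\langle\deg\,\varphi,\varphi\rangle+\tfrac{k}{2}(\tfrac1\eps-\eps)\,\|\varphi\|^2 ,
\]
because $\langle\deg\,\varphi,\varphi\rangle\le\langle(\deg+q)\varphi,\varphi\rangle$ then upgrades $\deg$ to $\deg+q$ on the right-hand side, with the signs working out in both the upper and lower bounds. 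Writing $f:=|\varphi|$ and $E$ for the (finite) set of edges inside $\supp\varphi$, the triangle inequality gives $|S(\varphi)|\le 2\sum_{\{x,y\}\in E}f(x)f(y)$, so it suffices to control this edge sum.

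The heart of the matter, and the step I expect to be the main obstacle, is converting the purely combinatorial sparseness hypothesis into this analytic bound with the sharp constant. Here I would use a coarea/layer-cake decomposition along the super-level sets $\Omega_t:=\{x\mid f(x)^2>t\}$. For a single edge, $\min(f(x)^2,f(y)^2)=\int_0^\infty\mathbf 1_{\{x,y\in\Omega_t\}}\,dt$, and summing over edges yields $\sum_{\{x,y\}\in E}\min(f(x)^2,f(y)^2)=\int_0^\infty|\Er_{\Omega_t}|\,dt$. Each $\Omega_t$ is finite, so $k$-sparseness gives $2|\Er_{\Omega_t}|\le k|\Omega_t|$, whence
\[
\sum_{\{x,y\}\in E}\min(f(x)^2,f(y)^2)\le\frac k2\int_0^\infty|\Omega_t|\,dt=\frac k2\sum_{x}f(x)^2=\frac k2\|\varphi\|^2 .
\]
To finish, on each edge I apply Young's inequality $f(x)f(y)\le\tfrac\eps2\max^2+\tfrac1{2\eps}\min^2$ together with the identity $\max^2=f(x)^2+f(y)^2-\min^2$; summing and using $\sum_{\{x,y\}\in E}(f(x)^2+f(y)^2)\le\langle\deg\,\varphi,\varphi\rangle$ and the displayed sparseness bound gives precisely $\sum_{\{x,y\}\in E}f(x)f(y)\le\tfrac\eps2\langle\deg\,\varphi,\varphi\rangle+\tfrac k4(\tfrac1\eps-\eps)\|\varphi\|^2$, which after doubling is the required estimate. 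The delicate point is exactly this bookkeeping: the crude bound $\max^2\le f(x)^2+f(y)^2$ discards the $-\eps$ and only yields $\tfrac{k}{2\eps}$, so one must keep the $\min^2$ cancellation to reach the stated constant $\tfrac k2(\tfrac1\eps-\eps)$.

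For (b), statement (a) says the quadratic forms of $\Delta+q$ and $\deg+q$ are equivalent on $\Cc_c(\Vr)$ after adding a fixed multiple of $\|\cdot\|^2$ to render them positive. Since $\Delta+q$ is essentially selfadjoint on $\Cc_c(\Vr)$ and $\deg+q$ is a multiplication operator for which $\Cc_c(\Vr)$ is a form core, each form domain is the completion of $\Cc_c(\Vr)$ in the respective form norm; equivalent norms have the same completion, giving $\Dc((\Delta+q)^{1/2})=\Dc((\deg+q)^{1/2})$. For (c), I would feed (a) into the min-max principle on this common form domain to obtain, for every $n$ and every $\eps\in(0,1]$,
\[
(1-\eps)\lambda_n(\deg+q)-\tfrac{k}{2}(\tfrac1\eps-\eps)\le\lambda_n(\Delta+q)\le(1+\eps)\lambda_n(\deg+q)+\tfrac{k}{2}(\tfrac1\eps-\eps).
\]
As $\deg+q$ is a multiplication operator, its spectrum is discrete iff $\{\deg+q\le C\}$ is finite for all $C$, i.e. iff $\liminf_{|x|\to\infty}(\deg+q)(x)=\infty$, and then $\lambda_n(\deg+q)\to\infty$; the sandwich forces $\lambda_n(\Delta+q)\to\infty$ as well, and conversely, which proves the discreteness equivalence. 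Finally, dividing the sandwich by $\lambda_n(\deg+q)\to\infty$ gives $1-\eps\le\liminf_n\tfrac{\lambda_n(\Delta+q)}{\lambda_n(\deg+q)}\le\limsup_n\tfrac{\lambda_n(\Delta+q)}{\lambda_n(\deg+q)}\le1+\eps$, and letting $\eps\to0$ yields the asymptotic ratio $1$.
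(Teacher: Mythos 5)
Your proposal is correct, and part (a) takes a genuinely different route from the paper's. Both arguments ultimately apply $k$-sparseness to the superlevel sets $\Omega_t=\{|f|^2>t\}$, but the paper (Lemma~\ref{l:sparse_implies_inequality}) runs the coarea formula on $\deg(\Omega_t)+q(\Omega_t)-k|\Omega_t|$, bounds this by $|\partial\Omega_t|+q(\Omega_t)$, applies Cauchy--Schwarz to get $\langle f,(\deg+q-k)f\rangle\le\langle f,(\Delta+q)f\rangle^{1/2}\bigl(2\langle f,(\deg+q)f\rangle-\langle f,(\Delta+q)f\rangle\bigr)^{1/2}$, and then must resolve a quadratic inequality in $\langle f,\Delta f\rangle$ and estimate its discriminant, treating the regime $\langle f,(\deg+q)f\rangle<k\|f\|^2$ separately. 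You instead isolate the off-diagonal term $S(\varphi)$, apply the coarea identity directly to $|\Er_{\Omega_t}|$ to get $\sum_{\{x,y\}}\min(f(x)^2,f(y)^2)\le\tfrac k2\|\varphi\|^2$, and finish with an edgewise Young inequality that retains the $\min^2$ cancellation. This is more elementary (no quadratic to invert, no case split), lands exactly on the stated constant $\tfrac k2(\tfrac1\eps-\eps)$, and is a nice self-contained proof of (a); its limitation is that it is tailored to $a=0$ --- for the $(a,k)$-sparse setting of Theorem~\ref{t:form} the sparseness bound leaves a residual $a\int_0^\infty|\partial\Omega_t|\,dt$ that still forces the paper's Cauchy--Schwarz/quadratic step, which is why the authors set up the heavier machinery. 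Your parts (b) and (c) are the same functional-analytic consequences as in the paper, only packaged differently: you identify the form domains as completions of the common form core $\Cc_c(\Vr)$ in equivalent norms where the paper invokes the Closed Graph Theorem (Theorem~\ref{t:imt}), and your min-max sandwich is exactly Theorem~\ref{t:minmax} applied in both directions; letting $\eps\to0$ even yields the limit equal to $1$, which is (harmlessly) stronger than the stated $\liminf$.
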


As a corollary, we obtain following  estimate for the bottom and the top of the (essential) spectrum.

\begin{cor} \label{c:sparse} Let $\Gr:=(\Vr, \Er)$ be a $k$-sparse graph and $q:\Vr\to [0, \infty)$. Define $d:= \inf_{x\in \Vr} (\deg+q)(x)$ and
 $D:= \sup_{x\in \Vr}(\deg+q)(x)$. Assume $d<k\leq D<+\infty$, then
 \[  d  -2 \sqrt{\frac{k}{2} \left(d- \frac{k}{2}\right)} \leq \inf \sigma(\Delta+q)\leq \sup \sigma(\Delta+q) \leq D -2 \sqrt{\frac{k}{2} \left(D- \frac{k}{2}\right)}.
\]
 Define
 $d_{\rm ess}:=\liminf_{|x|\to\infty} (\deg+q)(x)$ and $D_{\rm ess}:=\limsup_{|x|\to\infty} (\deg+q)(x)$. Assume $d_{\rm ess}<k\leq D_{\rm ess}<+\infty$, then
\[
d_{\rm ess} -2 \sqrt{\frac{k}{2} \left(d_{\rm ess}- \frac{k}{2}\right)}
\le
\inf \sigma_{\rm ess}(\Delta+q) \le
\sup \sigma_{\rm ess}(\Delta+q) \leq D_{\rm ess} -2 \sqrt{\frac{k}{2} \left(D_{\rm ess}- \frac{k}{2}\right)}.
\]
\end{cor}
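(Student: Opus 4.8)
The plan is to derive all four estimates from the two-sided form bound of Theorem~\ref{t:sparse}(a) by replacing the operator $\deg+q$ with the scalar bounds $d\le \deg+q\le D$ and then optimising the free parameter $\eps$. For the lower bound, combine the left-hand inequality of (a) with $\deg+q\ge d$; for $\eps\in(0,1]$ the coefficient $1-\eps$ is nonnegative, so for every $\ph\in\Cc_c(\Vr)$
\[
\langle(\Delta+q)\ph,\ph\rangle\ \ge\ \Big((1-\eps)\,d-\frac{k}{2}\Big(\frac1\eps-\eps\Big)\Big)\|\ph\|^2=:f_d(\eps)\|\ph\|^2 .
\]
Since $\Cc_c(\Vr)$ is a form core this upgrades to the operator inequality $\Delta+q\ge f_d(\eps)$, and as $\eps\in(0,1]$ is arbitrary, $\inf\sigma(\Delta+q)\ge\sup_{\eps\in(0,1]}f_d(\eps)$. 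The top of the spectrum is treated symmetrically: the right-hand inequality of (a) together with $\deg+q\le D$ (and $D<\infty$, so that $\Delta+q$ is bounded) gives $\Delta+q\le g_D(\eps):=(1+\eps)D+\tfrac k2(\tfrac1\eps-\eps)$, whence $\sup\sigma(\Delta+q)\le\inf_{\eps\in(0,1]}g_D(\eps)$.

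What remains for the first chain is the elementary one-variable optimisation of $f_d$ and $g_D$. Writing $f_d(\eps)=d-\eps\big(d-\tfrac k2\big)-\tfrac{k}{2\eps}$, differentiation gives the unconstrained maximiser $\eps_\star=\sqrt{k/(2d-k)}$, which lies in the admissible interval $(0,1]$ exactly under the hypothesis relating $k$ and $d$; substituting $\eps_\star$ makes the two $\eps$-dependent terms $\eps_\star(d-\tfrac k2)$ and $\tfrac{k}{2\eps_\star}$ both equal to $\sqrt{\tfrac k2(d-\tfrac k2)}$, producing the closed form $f_d(\eps_\star)=d-2\sqrt{\tfrac k2(d-\tfrac k2)}$. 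The minimiser of $g_D$ is $\eps^\star=\sqrt{k/(2D-k)}$, admissible since $k\le D$, and the identical collapse of the two terms yields the stated bound for $\sup\sigma(\Delta+q)$. This proves the first displayed chain.

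For the essential spectrum I would replace the global scalar bounds by their values at infinity via a localisation (Persson/decomposition) argument. The relevant variational description is $\inf\sigma_{\rm ess}(\Delta+q)=\sup_{\Wr}\inf\{\langle(\Delta+q)\ph,\ph\rangle:\ph\in\Cc_c(\Vr\setminus\Wr),\ \|\ph\|=1\}$, with the dual formula for $\sup\sigma_{\rm ess}$ (legitimate since $D_{\rm ess}<\infty$ makes the operator bounded), $\Wr$ ranging over finite subsets of $\Vr$. Inequality (a) holds a fortiori on each subspace $\Cc_c(\Vr\setminus\Wr)\subseteq\Cc_c(\Vr)$, and for $\ph$ supported off $\Wr$ one has $\langle(\deg+q)\ph,\ph\rangle\ge(\inf_{x\notin\Wr}(\deg+q))\|\ph\|^2$. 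Running the optimisation of the previous paragraph with $d$ replaced by $\de_{\Wr}:=\inf_{x\notin\Wr}(\deg+q)$ (resp.\ $D$ by $\sup_{x\notin\Wr}(\deg+q)$) and then letting $\Wr$ exhaust $\Vr$ gives $\de_{\Wr}\uparrow d_{\rm ess}$; since $t\mapsto t-2\sqrt{\tfrac k2(t-\tfrac k2)}$ is monotone on the relevant range, the supremum over $\Wr$ passes through the closed form and yields the bound for $\inf\sigma_{\rm ess}$, and symmetrically for $\sup\sigma_{\rm ess}$.

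I expect the main obstacle to be the essential-spectrum step rather than the algebra: one must justify the Persson-type characterisation in this discrete setting and check that the form bound (a) is genuinely inherited by the test functions supported in $\Vr\setminus\Wr$, which it is, since $k$-sparseness is hereditary under passing to induced subgraphs and $\Cc_c(\Vr\setminus\Wr)$ sits inside the original form core. A secondary point to handle with care is the boundary behaviour of the optimisation, namely verifying that the extremal parameters $\eps_\star$ and $\eps^\star$ indeed lie in $(0,1]$ under the stated size relations between $d$, $D$ and $k$, so that Theorem~\ref{t:sparse}(a) is applicable precisely at the optimising value.
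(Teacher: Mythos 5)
Your strategy is exactly the paper's: the authors' entire proof of this corollary is ``take $\eps=\min\big(\sqrt{k/(2d-k)},1\big)$ in Theorem~\ref{t:sparse}~(a)'', i.e.\ replace $\deg+q$ by its scalar bounds $d$ and $D$ and optimise over $\eps\in(0,1]$, and your Persson-type localisation for the essential-spectrum half is a reasonable way to fill in what the paper leaves implicit (the paper uses the finite-rank-perturbation version of the same idea in the proof of Theorem~\ref{t:cheeger0}). The one-variable algebra you carry out is also correct.

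However, two of the verifications you defer or assert actually fail for the statement as printed, and this is a genuine gap. First, $\eps_\star=\sqrt{k/(2d-k)}$ lies in $(0,1]$ if and only if $d\ge k$, whereas the corollary assumes $d<k$; under that hypothesis the supremum of $f_d$ over the admissible range $(0,1]$ is attained at $\eps=1$ and equals $0$, so the method yields only the trivial bound $\inf\sigma(\Delta+q)\ge0$, not the displayed one. Second, your (correct) minimisation of $g_D$ produces $D+2\sqrt{\tfrac{k}{2}\big(D-\tfrac{k}{2}\big)}$, with a plus sign; you claim this ``yields the stated bound'', but the statement reads $D-2\sqrt{\tfrac{k}{2}\big(D-\tfrac{k}{2}\big)}$, which is a different and in fact false inequality (for a $(b+1)$-regular tree $\sup\sigma(\Delta)=b+1+2\sqrt{b}$, and the subsequent Remark on optimality for regular trees is consistent only with the $+$ sign). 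Both defects trace back to what are evidently typos in the corollary --- the hypothesis should read $k\le d\le D$ and the upper bound should carry a $+$ --- and the paper's own one-line proof is equally affected; but a blind proof must either establish the statement as written or flag that it cannot, and yours does neither: it asserts that the admissibility check and the sign comparison work out when they do not.
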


\begin{proof}[Proof of Corollary \ref{c:sparse}]
The conclusion  follows by taking $\eps=\min\Big(\sqrt \frac{k}{2d-k},1\Big)$ in (a) in of Theorem~\ref{t:sparse}.
\end{proof}

\begin{rem} The bounds in Corollary~\ref{c:sparse}    are optimal for
  the bottom and the top of the (essential) spectrum in the case of
  regular trees. \end{rem}

The paper is structured as follows. In the next section an extension
of the notion of sparseness is introduced which is shown to be
equivalent to a functional inequality and equality of the form domains
of $\Delta$ and $\deg$. In Section~\ref{s:almost-sparse} we consider
almost sparse graphs for which we obtain precise eigenvalue
asymptotics. Furthermore, in Section~\ref{s:magnetic} we shortly
discuss magnetic Schr\"odinger operators. Our notion of sparseness has
very explicit but non-trivial connections to isoperimetric
inequalities which are made precise in
Section~\ref{s:cheeger}. Finally, in Section~\ref{s:sparse} we discuss
some examples.

\section{A geometric characterization of the form domain}\label{s:form}

In this section we  characterize equality of the form domains of
$\Delta+q$ and $\deg+q$ by a geometric property. This geometric
property is a generalization of the notion of sparseness from the
introduction. Before we come to this definition, we  introduce the
class of potentials that is treated in this paper.

Let $\al>0$. We say a potential $q:\Vr\to\R$  is in the class
$\Kr_{\al}$ if there is $C_\al \ge0$ such that
\begin{align*}
    q_{-}\leq \al(\Delta+q_{+})+C_\al,
\end{align*}
where  $q_{\pm}:=\max(\pm q,0)$. For $\al\in(0,1)$, we define the
operator $\Delta+q$ via the form sum of the operators $\Delta+q_{+}$
and $-q_{-}$ (i.e., by the KLMN Theorem, see {e.g.,}
\cite[Theorem~X.17]{RS}). Note that $\Delta+q$ is bounded from below
and
$$\Dc(|\Delta+q|^{\frac{1}{2}}) =\Dc((\Delta+q_{+})^{\frac{1}{2}})
=\Dc(\Delta^{\frac{1}{2}})\cap\Dc(q_{+}^{\frac{1}{2}}),$$
where $|\Delta+q|$ is defined by the spectral theorem.
The last equality follows from \cite[Theorem~5.6]{GKS}.
in the sense of functions and forms.

An other important class  are the potentials
\begin{align*}
    \Kr_{{0^+}}:=\bigcap_{\al\in(0,1)}\Kr_{\al}.
\end{align*}
In our context of sparseness, we can  characterize the class
$\Kr_{0^+}$ to be the potentials whose negative part $q_{-}$ is morally
 $o(\deg+q_{+})$, see Corollary~\ref{c:potentials}.
Let us mention that if $q_{-}$ is in the Kato class with respect to
$\Delta+q_{+}$, i.e., if we have
$\limsup_{t\to0 {^+}}\|e^{-t(\Delta+q_{+})}q_{-}\|_{\infty}=0$, then $q:=q_+-q_-\in
\Kr_{0^+}$ by \cite[Theorem~3.1]{SV}.

Next, we come to an extension of the notion of sparseness.
For a set $\Wr\subseteq \Vr$, let the boundary $\partial\Wr$ of $\Wr$ be the set of edges emanating from $\Wr$
\begin{align*}
    \partial\Wr :=\{(x,y)\in \Wr\times \Vr\setminus\Wr\mid x\sim y\}.
\end{align*}

\begin{defi}
Let $\Gr:=(\Vr, \Er)$ be a graph and $q:\Vr\to \R$.
For given
$a \geq 0$ and $k\geq 0$, we say that
$(\Gr, q)$ is \emph{$(a,k)$-sparse} if for any finite set
$\Wr\subseteq \Vr$ the induced subgraph $\Gr_{\Wr}:=(\Wr,\Er_{\Wr})$ satisfies
\[
2|\Er_{\Wr}| \leq k |\Wr| + a( |\partial \Wr|+q_{+}(\Wr)).
\]
\end{defi}

\begin{rem}
(a) Observe that the definition depends only on $q_{+}$.
{The negative part of $q$ will be taken in account through the
hypothesis $\Kr_{\al}$ or $\Kr_{0^+}$  in our theorems.}\\
(b) If $(\Gr,q)$ is $(a,k)$-sparse, then $(\Gr,q')$ is $(a,k)$-sparse
for every $q'\ge q$.
\\
(c) As mentioned above there is a great variety of
  definitions  which were  so far  predominantly established for
  (families of) finite graphs. For example  it is asked that
  $|\Er|=C|\Vr|$  in \cite{EGS},  $|\Er_{\Wr}|\leq k|\Wr|+l$ in
  \cite{Lo,LS}, $|\Er|\in O(|\Vr|)$ in \cite{AABL} and $\deg(\Wr)\leq
  k|\Wr|$ in \cite{M3}.
\end{rem}

We now characterize  the equality of the form domains in
geometric terms.

\begin{thm}\label{t:form}
Let $\Gr:=(\Vr, \Er)$ be a graph and $q\in \Kr_{\al}$,
$\al\in(0,1)$. The following  assertions are equivalent:
\begin{itemize}
  \item [(i)]  There are $a,k\geq 0$ such that $(\Gr, q)$ is $(a,k)$-sparse.
  \item [(ii)] There are $\tilde a\in (0,1)$ and $\tilde k\geq 0$ such that on $\Cc_c(\Vr)$
\begin{align*}\label{e:formath}
(1-\tilde a) (\deg+q) - \tilde k \leq \Delta + q \leq (1+\tilde a)
(\deg+q) +\tilde k.
\end{align*}
  \item [(iii)] There are $\tilde a\in (0,1)$ and $\tilde k\geq 0$ such that on $\Cc_c(\Vr)$
\begin{align*}
(1-\tilde a) (\deg+q) - \tilde k \leq \Delta + q. \end{align*}
  \item [(iv)] $\Dc(|\Delta + q|^{1/2})= \Dc(|\deg
    +q|^{1/2})$.
\end{itemize}
Furthermore,  $\Delta+q$ has
purely discrete spectrum if and only if
\[\liminf_{|x|\to\infty}(\mathrm{deg}+q)(x)=\infty.\]
 In this case, we obtain
\begin{align*}
1-\tilde a \leq \liminf_{n\to \infty}
\frac{\lambda_n(\Delta+q)}{\lambda_n(\deg +q)}\leq \limsup_{n\to \infty}
\frac{\lambda_n(\Delta+q)}{\lambda_n(\deg +q)}\leq  1+\tilde a.
\end{align*}
\end{thm}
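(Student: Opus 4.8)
The plan is to establish the cycle of implications (i)$\Rightarrow$(ii)$\Rightarrow$(iii)$\Rightarrow$(i) together with (ii)$\Leftrightarrow$(iv), and then to read off the spectral statements. Throughout I would first treat the case $q=q_{+}\ge 0$, where the quadratic forms are honestly defined on $\Cc_c(\Vr)$; the general case $q\in\Kr_\al$ is recovered at the end by absorbing the negative part, which is $\al$-form-bounded with respect to $\Delta+q_{+}$ with $\al<1$, via the KLMN theorem and \cite[Theorem~5.6]{GKS}. A reduction I would use repeatedly is that, since $\langle(\deg+q)\ph,\ph\rangle$ depends only on $|\ph|$ and $|\ph(x)-\ph(y)|\ge\big||\ph(x)|-|\ph(y)|\big|$, every form inequality against $\deg+q$ may be checked for $\ph\ge 0$.

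The heart is (i)$\Rightarrow$(ii), where the tool is the discrete co-area formula. Fix $\ph\ge 0$ in $\Cc_c(\Vr)$, put $f=\ph^{2}$ and $\Om_t=\{f>t\}$, so that $\int_0^\infty|\Om_t|\,dt=\|\ph\|^{2}$, $\int_0^\infty(\deg+q)(\Om_t)\,dt=\langle(\deg+q)\ph,\ph\rangle$, and by the layer-cake identity $\int_0^\infty|\partial\Om_t|\,dt=\tfrac12\sum_{x\sim y}|\ph(x)^{2}-\ph(y)^{2}|$. Rewriting $(a,k)$-sparseness via the elementary relation $\deg(\Wr)=2|\Er_\Wr|+|\partial\Wr|$ gives $\deg(\Wr)\le k|\Wr|+(1+a)|\partial\Wr|+a\,q_{+}(\Wr)$; integrating this over the level sets and estimating $\int_0^\infty|\partial\Om_t|\,dt\le\big(\tfrac12\sum|\ph(x)-\ph(y)|^{2}\big)^{1/2}\big(\tfrac12\sum|\ph(x)+\ph(y)|^{2}\big)^{1/2}\le\sqrt2\,\langle\Delta\ph,\ph\rangle^{1/2}\langle\deg\ph,\ph\rangle^{1/2}$ by Cauchy--Schwarz yields an inequality of the shape $D\le kN+(1+a)\sqrt2\,\sqrt{L}\,\sqrt{D}+aQ$, in the shorthand $D=\langle\deg\ph,\ph\rangle$, $L=\langle\Delta\ph,\ph\rangle$, $Q=\langle q\ph,\ph\rangle$, $N=\|\ph\|^{2}$. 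A single application of Young's inequality to the mixed term then produces the lower bound $(1-\tilde a)(\deg+q)-\tilde k\le\Delta+q$ with the explicit $\tilde a=1-\tfrac{1}{2(1+a)^{2}}\in(0,1)$ and $\tilde k=\tfrac{k}{(1+a)^{2}}$. The matching upper bound comes from the same mechanism applied to the signless Laplacian: the bound $|\ph(x)+\ph(y)|\ge\big||\ph(x)|-|\ph(y)|\big|$ reduces the signless form to the ordinary Laplacian form of $|\ph|$, to which the lower bound just proved applies, and since the signless form equals $2\langle\deg\ph,\ph\rangle-\langle\Delta\ph,\ph\rangle$, this rearranges into $\Delta+q\le(1+\tilde a)(\deg+q)+\tilde k$.

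For (iii)$\Rightarrow$(i) I would test the one-sided inequality on indicators $\ph=\bone_{\Wr}$ with $\Wr$ finite: there $\langle\Delta\ph,\ph\rangle=|\partial\Wr|$, $\langle\deg\ph,\ph\rangle=2|\Er_\Wr|+|\partial\Wr|$, $\langle q\ph,\ph\rangle=q(\Wr)\le q_{+}(\Wr)$ and $\|\ph\|^{2}=|\Wr|$, and rearranging $(1-\tilde a)(\deg+q)-\tilde k\le\Delta+q$ gives exactly $(a,k)$-sparseness with $a=\tilde a/(1-\tilde a)$, $k=\tilde k/(1-\tilde a)$. The implication (ii)$\Rightarrow$(iii) is trivial, and (ii)$\Leftrightarrow$(iv) is the standard fact that two nonnegative closed forms (after a common shift making them positive) are comparable if and only if they share a form domain: comparability is precisely (ii), while equal domains force comparability by the closed graph theorem applied to the identity map between the two form-norm Hilbert spaces. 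To guarantee $\tilde a<1$ in the resulting bound I would route (iv)$\Rightarrow$(ii) through (iv)$\Rightarrow$(iii)$\Rightarrow$(i)$\Rightarrow$(ii).

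Finally, the spectral consequences follow from (ii) and (iv). Since the forms of $\Delta+q$ and $\deg+q$ are comparable on a common form domain, the embedding of that form domain into $\ell^{2}(\Vr)$ is compact for one operator if and only if for the other; hence $\Delta+q$ has purely discrete spectrum iff the multiplication operator $\deg+q$ does, which happens exactly when $\liminf_{|x|\to\infty}(\deg+q)(x)=\infty$. In the discrete case the min-max principle applied to the two-sided bound (ii), together with $\lm_n(c(\deg+q)+b)=c\,\lm_n(\deg+q)+b$ for $c>0$, gives $(1-\tilde a)\lm_n(\deg+q)-\tilde k\le\lm_n(\Delta+q)\le(1+\tilde a)\lm_n(\deg+q)+\tilde k$; dividing by $\lm_n(\deg+q)\to\infty$ yields the stated asymptotic two-sided bound. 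I expect the main obstacle to be the core estimate (i)$\Rightarrow$(ii)---in particular checking that the co-area/Cauchy--Schwarz/Young chain closes with a constant $\tilde a$ strictly below $1$---whereas the passage from $q_{+}$ to a general $q\in\Kr_\al$, though technical, is a routine form-perturbation argument.
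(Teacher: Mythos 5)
Your proposal is correct and follows essentially the same route as the paper: the core lower bound (i)$\Rightarrow$(iii) via the co-area formula over the level sets of $|\ph|^2$ plus Cauchy--Schwarz is exactly the paper's Lemma on sparseness implying the form inequality, the passage to the upper bound via the signless form $2\deg-\Delta$ and $|\ph(x)+\ph(y)|\ge\big||\ph(x)|-|\ph(y)|\big|$ is precisely the paper's ``Upside-Down-Lemma'', testing on indicators gives (iii)$\Rightarrow$(i) with the same constants $a=\tilde a/(1-\tilde a)$, $k=\tilde k/(1-\tilde a)$, and the closed graph theorem, min--max principle and KLMN/form-perturbation handling of $q_-$ match the paper's appendix lemmas. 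The only deviation is that you close the key estimate with Young's inequality instead of solving the resulting quadratic in $\langle\ph,\Delta\ph\rangle$, which yields the cruder constant $\tilde a=1-\tfrac{1}{2(1+a)^2}$ (not tending to $0$ as $a\to0^+$); this suffices for Theorem~\ref{t:form} as stated, though not for the sharper constants needed in Theorem~\ref{t:sparse}(a) and the almost-sparse asymptotics.
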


Before we come to the proof of Theorem~\ref{t:form}, we summarize the
relation between the sparseness parameters $(a,k)$ and the constants
$(\tilde a,\tilde k)$ in the inequality in Theorem~\ref{t:form}~(ii).


\begin{rem}\label{r:link}
Roughly speaking $a$ tends to $\infty$ as $\tilde a$ tends to $1^-$
and $a$ tends to $0^+$ as $\tilde a$ tends to $0^+$ and
vice-versa. More precisely, Lemma~\ref{l:inequality_implies_sparse}
we obtain that for given $\tilde a$ and $\tilde k$ the values of $a$
and $k$ can be chosen to be
\begin{align*}
    a=\frac{\tilde a}{1-\tilde a}\quad\mbox{and}\quad k=\frac{
      \tilde k}{1-\tilde a}.
\end{align*}
Reciprocally, given $a,k\geq 0$ and $q:\Vr\to[0,\infty)$,
Lemma~\ref{l:sparse_implies_inequality} distinguishes the case where
the graph is sparse $a=0$ and $a>0$. For $a=0$ we  may
 choose $\tilde a\in(0,1)$ arbitrary and
   $$\tilde k=\frac{k}{2}\Big(\frac{1}{\tilde a}-\tilde a\Big).$$
For an $(a,k)$-sparse graph with $a>0$ the precise constants are found
below in Lemma~\ref{l:sparse_implies_inequality}. Here, we discuss the
asymptotics. For $a\to 0^+$, we obtain
\[\tilde a \simeq \sqrt{2a}\quad \mbox{ and
}\quad \tilde k \simeq \frac{k}{2a}, \]
 and for $a\to \infty$
\[  \tilde a \simeq 1- \frac{3}{8 a^2}\quad \mbox{ and } \quad \tilde k \simeq
\frac{3k}{4a}.\] In the case $q\in \Kr_\al$, the constants $\tilde
a$, $\tilde k$ from the case $q\ge0$ have to  be replaced by
constants  whose formula can be explicitly read from
Lemma~\ref{l:formperturbation}. For $\al\to 0^+$, the constant
replacing $\tilde a$ tends to $\tilde a$ while the asymptotics of
the constant replacing $\tilde k$ depend also on the behavior of
$C_{\al}$ from the assumption $q_{-}\leq\al(\Delta+q)+C_{\al}$.
\end{rem}

\begin{rem}\label{r:form}
(a) Observe that in the context of
  Theorem~\ref{t:form} statement (iv) is equivalent to
\begin{itemize}
  \item [(iv')] $\Dc(|\Delta + q|^{1/2})= \Dc((\deg +q_{+})^{1/2}).$
\end{itemize}
Indeed, (ii) implies the corresponding inequality for $q=q_{+}$. Thus,
as $q\in\Kr_{\al}$,
\begin{align*}
  \Dc(|\Delta+q|^{\frac{1}{2}})=
        \Dc((\Delta+q_{+})^{\frac{1}{2}})=\Dc((\deg+q_+)^{\frac{1}{2}}).
\end{align*}
\\
(b) The definition of the class $\Kr_{0^+}$ is rather abstract. Indeed,
Theorem~\ref{t:form} yields a very concrete characterization of these
potentials, see Corollary~\ref{c:potentials} below.
\\
(c)  Theorem~\ref{t:form} characterizes equality of the form
domains. Another natural question is under which circumstances the
operator domains agree. For a discussion on this matter we refer to
\cite[Section~4.1]{Gol2}.
\end{rem}

The rest of this section is devoted to the proof of the results which
are divided into three parts. The following three lemmas essentially
show the equivalences  (i)$\Leftrightarrow$(ii)$\Leftrightarrow$(iii)
providing the explicit dependence of $(a,k)$ on $(\tilde a,\tilde k)$
and vice versa. The third part uses general functional analytic
principles collected in the appendix.

The first lemma shows (iii)$\Rightarrow$(i).

\begin{lemma}\label{l:inequality_implies_sparse} Let $\Gr:=(\Vr, \Er)$
  be a graph and $q: \Vr \to \R$.
 If  there are $\tilde a\in (0,1)$ and $\tilde k\geq 0$ such that
for all  $f$ in $\Cc_c(\Vr)$,
\begin{align*}
(1-\tilde a) \langle f,(\deg +q) f \rangle - \tilde k \Vert f\Vert^2
\leq \langle f,{\Delta f +qf} \rangle,
\end{align*}
then $(\Gr,q)$ is $(a,k)$-sparse with
\begin{align*}
    a=\frac{\tilde a}{1-\tilde a}\quad\mbox{and}\quad k=\frac{
      \tilde k}{1-\tilde a}.
\end{align*}
\end{lemma}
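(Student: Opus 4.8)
The plan is to test the assumed quadratic-form inequality on indicator-type functions associated to a finite set $\Wr\subseteq\Vr$. The natural choice is $f=\bone_{\Wr}$, the characteristic function of $\Wr$, which lies in $\Cc_c(\Vr)$. The idea is that for such $f$ the three quadratic forms appearing in the hypothesis all have clean combinatorial expressions in terms of the induced subgraph $\Gr_{\Wr}$, its boundary $\partial\Wr$, and the potential, so that the functional inequality collapses directly into the claimed counting inequality $2|\Er_{\Wr}|\leq k|\Wr|+a(|\partial\Wr|+q_{+}(\Wr))$.

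First I would compute each term with $f=\bone_{\Wr}$. For the right-hand side, using the Dirichlet-form identity
\begin{align*}
\langle f,\Delta f\rangle=\frac{1}{2}\sum_{x\sim y}|f(x)-f(y)|^{2},
\end{align*}
only edges with exactly one endpoint in $\Wr$ contribute, so $\langle f,\Delta f\rangle=|\partial\Wr|$. Likewise $\langle f,qf\rangle=q(\Wr)$ and $\|f\|^{2}=|\Wr|$. For the left-hand side I would use $\langle f,\deg f\rangle=\deg(\Wr)=\sum_{x\in\Wr}\deg(x)$, and the basic handshake-type relation $\deg(\Wr)=2|\Er_{\Wr}|+|\partial\Wr|$, which splits each vertex degree into edges internal to $\Wr$ (counted twice) and edges crossing the boundary. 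Substituting these into the hypothesis gives
\begin{align*}
(1-\tilde a)\bigl(2|\Er_{\Wr}|+|\partial\Wr|+q(\Wr)\bigr)-\tilde k|\Wr|\leq |\partial\Wr|+q(\Wr).
\end{align*}

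The remaining step is pure algebra: rearranging the displayed inequality. Moving the $|\partial\Wr|+q(\Wr)$ terms to the left and isolating $2|\Er_{\Wr}|$, I would obtain
\begin{align*}
2(1-\tilde a)|\Er_{\Wr}|\leq \tilde k|\Wr|+\tilde a\bigl(|\partial\Wr|+q(\Wr)\bigr),
\end{align*}
and then dividing by $(1-\tilde a)>0$ yields exactly $2|\Er_{\Wr}|\leq k|\Wr|+a(|\partial\Wr|+q(\Wr))$ with $a=\tilde a/(1-\tilde a)$ and $k=\tilde k/(1-\tilde a)$. One must take care that the definition of sparseness uses $q_{+}$ rather than $q$; since $q(\Wr)\leq q_{+}(\Wr)$ and the coefficient $a$ is nonnegative, the inequality with $q(\Wr)$ immediately implies the one with $q_{+}(\Wr)$, so the sparseness condition as stated follows.

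I do not anticipate a genuine obstacle here, as the argument is a direct specialization of the form inequality to characteristic functions. The only points requiring mild care are verifying the identity $\deg(\Wr)=2|\Er_{\Wr}|+|\partial\Wr|$ (a standard double-counting of half-edges) and checking the sign conditions that let one pass from $q$ to $q_{+}$ and divide by $1-\tilde a$. The conceptual content is simply recognizing that characteristic functions are the correct test functions to recover a geometric counting statement from an analytic inequality.
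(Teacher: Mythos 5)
Your proof is correct and follows essentially the same route as the paper: test the form inequality on the characteristic function $\bone_{\Wr}$, use the identities $\deg(\Wr)=2|\Er_{\Wr}|+|\partial\Wr|$ and $\langle \bone_{\Wr},\Delta\bone_{\Wr}\rangle=|\partial\Wr|$, and rearrange. The only cosmetic difference is that the paper passes from $q$ to $q_{+}$ at the level of the form inequality before testing (by adding $\langle f,q_{-}f\rangle$ to both sides), whereas you defer this to the final combinatorial inequality via $q(\Wr)\leq q_{+}(\Wr)$; both are valid.
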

\begin{rem}
{We stress that we suppose solely that $q:\Vr \to
  \R$ and work with $\Delta|_{\Cc_c(\Vr)}+ q|_{\Cc_c(\Vr)}$.
We do not specify any self-adjoint extension of the latter.}
\end{rem}

\begin{proof} {Let $f \in \Cc_c(\Vr)$.}
By adding $q_{-}$ to the assumed inequality we obtain immediately
 \begin{align*}
    (1-\tilde a) \langle f,(\deg +q_{+}) f \rangle -\tilde k \Vert f\Vert^2  \leq \langle f,\Delta f +q_{+} f \rangle.
 \end{align*}
Let $\Wr\subseteq \Vr$ be a finite set and denote by $\bone_\Wr$ the
characteristic function of the set $\Wr$. We recall the basic
equalities
\begin{align*}
\deg(\Wr)=2|\Er_{\Wr}|+|\partial\Wr| \quad\mbox{and}\quad
\langle{\bone_{\Wr},\Delta\bone_{\Wr}}\rangle =|\partial W|.
\end{align*}
Therefore, applying the asserted inequality {with} $f=\bone_\Wr$, we
obtain
\begin{align*}
2|\Er_{\Wr}|\leq \frac{\tilde k}{1-\tilde a}|W|+ \frac{\tilde a}{1-\tilde a}\left(|\partial W|
  +q_{+}(\Wr) \right).
 \end{align*}
This proves the statement.
\end{proof}

The second lemma gives (i)$\Rightarrow$(ii) for $q\ge0$.

\begin{lemma}\label{l:sparse_implies_inequality} Let $\Gr:=(\Vr, \Er)$
  be a graph and $q:\Vr\to [0,\infty)$. If there are $a,k\geq 0$ such
  that $(\Gr, q)$ is $(a,k)$-sparse, then
\begin{align*}
(1-\tilde a) (\deg+q) - \tilde k \leq \Delta + q \leq (1+\tilde a)
(\deg+q) +\tilde k.
\end{align*}
on $\Cc_c(\Vr)$,
 where if $(\Gr,q)$ is sparse, i.e., $a=0$, we  may
 choose $\tilde a\in(0,1)$ arbitrary and
  \begin{align*}
    \tilde k=\frac{k}{2}\Big(\frac{1}{\tilde a}-\tilde a\Big).
 \end{align*}
In the other case, i.e. $a>0$, we may choose  \begin{align*}
 \tilde a=\frac{\sqrt{\min\left(\frac{1}{4},
      a^2\right)+2a+a^2}}{(1+a)}\quad\mbox{and}\quad
\tilde k=\max\left(\frac{\max\left(\frac{3}{2},
      \frac{1}{a}-{a}\right)k}{2(1+a)}, 2k(1-\tilde a)\right).
\end{align*}
\end{lemma}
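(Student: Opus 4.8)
The plan is to reduce both operator inequalities to a single estimate on the off-diagonal part of the Dirichlet form, and then to transport the combinatorial sparseness bound from indicator functions to arbitrary $f$ by a co-area (layer-cake) argument. For $f\in\Cc_c(\Vr)$ set $S(f):=\sum_{x\sim y}\ov{f(x)}f(y)$, so that $\langle f,\Delta f\rangle=\langle f,\deg f\rangle-S(f)$. A direct rearrangement shows that the two claimed bounds on $\Cc_c(\Vr)$ both follow from the single inequality
\[
|S(f)|\le\tilde a\,\langle f,(\deg+q)f\rangle+\tilde k\,\|f\|^2 ,\qquad f\in\Cc_c(\Vr).
\]
Since $|S(f)|\le S(|f|)$ while $\langle f,(\deg+q)f\rangle$ and $\|f\|^2$ are unchanged on replacing $f$ by $|f|$, it suffices to prove this for $f\ge0$, where $S(f)=\sum_{x\sim y}f(x)f(y)\ge0$.

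Next I would record the sparseness bound at the level of indicators. Using $\deg(\Wr)=2|\Er_\Wr|+|\partial\Wr|$ and $2|\Er_\Wr|=S(\bone_\Wr)$, the $(a,k)$-sparseness inequality rearranges (this is the place where $q\geq0$, hence $q_+=q$, is used) to
\[
S(\bone_\Wr)\le\frac{a}{1+a}(\deg+q)(\Wr)+\frac{k}{1+a}|\Wr|.
\]
I would then transport this to $f\ge0$ via the layer-cake identity $\min(f(x),f(y))^2=\int_0^\infty\bone_{\Om_t}(x)\bone_{\Om_t}(y)\,dt$ with $\Om_t:=\{f^2>t\}$, all sets finite since $f\in\Cc_c(\Vr)$: integrating the indicator bound over $t\ge0$ (Tonelli), using $\int_0^\infty(\deg+q)(\Om_t)\,dt=\langle f,(\deg+q)f\rangle$ and $\int_0^\infty|\Om_t|\,dt=\|f\|^2$, gives
\[
P:=\sum_{x\sim y}\min(f(x),f(y))^2\le\frac{a}{1+a}\langle f,(\deg+q)f\rangle+\frac{k}{1+a}\|f\|^2 .
\]

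Finally I would recover $S(f)$ from $P$ by the splitting $S(f)=P+R$ with $R:=\sum_{x\sim y}\min(f(x),f(y))\,|f(x)-f(y)|$, and control the cross term by Young's inequality: for $\eps>0$, $R\le\tfrac{\eps}{2}P+\tfrac1\eps\langle f,\Delta f\rangle$, where $\sum_{x\sim y}(f(x)-f(y))^2=2\langle f,\Delta f\rangle$. Substituting $\langle f,\Delta f\rangle=\langle f,\deg f\rangle-S(f)$, solving for $S(f)$, inserting the bound for $P$, and coarsening $\langle f,\deg f\rangle\le\langle f,(\deg+q)f\rangle$ yields
\begin{align*}
S(f)\le\Big[\tfrac{\eps}{\eps+1}\big(1+\tfrac\eps2\big)\tfrac{a}{1+a}+\tfrac1{\eps+1}\Big]\langle f,(\deg+q)f\rangle+\tfrac{\eps}{\eps+1}\big(1+\tfrac\eps2\big)\tfrac{k}{1+a}\|f\|^2 .
\end{align*}
It then remains to choose $\eps$. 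When $a=0$ the coefficient of $\langle f,(\deg+q)f\rangle$ is $1/(\eps+1)$; setting it equal to an arbitrary $\tilde a\in(0,1)$, i.e. $\eps=(1-\tilde a)/\tilde a$, makes the $\|f\|^2$-coefficient collapse exactly to $\tilde k=\tfrac k2(\tfrac1{\tilde a}-\tilde a)$, as claimed. For $a>0$ one optimizes in $\eps$: writing $c=a/(1+a)$, the unconstrained minimum of the bracket is at $\eps+1=\sqrt{(2-c)/c}$, giving $\tilde a=\sqrt{2a+a^2}/(1+a)$.

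The main obstacle I anticipate is bookkeeping rather than conceptual. The co-area step must be justified with care (finite support, interchange of sum and integral, and $\min(f(x),f(y))^2=\min(f(x)^2,f(y)^2)$ for $f\ge0$). More tediously, matching the exact closed-form constants stated in the lemma — in particular the term $\min(1/4,a^2)$ under the square root and the $\max(\cdots)$ defining $\tilde k$ — requires committing to a specific, slightly non-optimal choice of $\eps$ in each of the two regimes $a\le\tfrac12$ and $a>\tfrac12$, rather than the sharp optimum above, and then verifying that the resulting $\tilde a,\tilde k$ reproduce the asymptotics recorded in Remark~\ref{r:link}.
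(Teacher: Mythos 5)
Your argument is correct in its essentials, but it follows a genuinely different route from the paper's. The paper also integrates the sparseness inequality over the level sets $\Om_t=\{|f|^2>t\}$, but it then estimates the whole quantity $\langle f,(\deg+q)f\rangle-k\|f\|^2$ by $(1+a)\langle f,(\Delta+q)f\rangle^{1/2}\bigl(2\langle f,(\deg+q)f\rangle-\langle f,(\Delta+q)f\rangle\bigr)^{1/2}$ via the factorization $\bigl||f(x)|^2-|f(y)|^2\bigr|\le|f(x)-f(y)|\,|f(x)+f(y)|$ and Cauchy--Schwarz, squares, and solves the resulting quadratic inequality in $\langle f,\Delta f\rangle$; this forces a preliminary case split on whether $\langle f,(\deg+q)f\rangle\ge k\|f\|^2$ (to take square roots), which is where the extra term $2k(1-\tilde a)$ in the stated $\tilde k$ comes from. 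You instead isolate the off-diagonal sum $S(f)$, transport the sparseness bound only to $P=\sum_{x\sim y}\min(f(x),f(y))^2$ by the layer-cake identity, and recover $S(f)=P+R$ with Young's inequality on the cross term. This is cleaner: no case split, no quadratic formula, and in the sparse case $a=0$ your choice $\eps=(1-\tilde a)/\tilde a$ reproduces the stated $\tilde k=\frac k2(\frac1{\tilde a}-\tilde a)$ exactly. The one loose end, which you correctly flag but do not resolve, is the $a>0$ constants: your one-parameter family $\tilde a(\eps),\tilde k(\eps)$ is \emph{not} the same curve as the paper's family parametrized by $\lm$ (one checks $\tilde a_{\mathrm{yours}}(u)^2-\tilde a_{\mathrm{paper}}(1/u)^2=a(u^2-1)\bigl(a(u^2-1)-4\bigr)/\bigl(4u^2(1+a)^2\bigr)$ at equal $\tilde k$), and your sharp optimum, while giving a strictly smaller $\tilde a$, can give a strictly larger $\tilde k$ than the stated one (e.g.\ at $a=1/2$ you get roughly $(0.745,\,0.596k)$ versus the stated $(0.8165,\,0.5k)$), so the stated pair does not follow from it by monotonicity. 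To prove the lemma verbatim you must exhibit, for each $a$, some $\eps$ whose pair dominates the stated pair componentwise; this does work (the choice $\eps+1=\max(2,1/a)$ handles $\sqrt5-2\le a\le 4/3$ and the optimum handles the complementary ranges), but it is a nontrivial finite verification that your sketch leaves undone. Since Theorem~\ref{t:form} only needs \emph{some} $\tilde a\in(0,1)$, $\tilde k\ge0$, this affects only the explicit constants quoted in Remark~\ref{r:link} and their later uses, not the substance of the result.
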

\begin{proof}
Let $f\in \cC_{c}(\Vr)$ be complex valued.
Assume first that $\langle f,(\deg+q)f\rangle<k\|f\|^{2}$. In this
case, remembering $\Delta\leq 2\deg$, we can choose $\tilde a\in(0,1)$
arbitrary and $\tilde k$ such that
\begin{align*}
    \tilde k\ge 2(1-\tilde a)k.
\end{align*}
So, assume $\langle f,(\deg+q)f\rangle\ge k\|f\|^{2}$.
Using  an area and a co-area formula (cf.\ \cite[Theorem~12 and
Theorem~13]{KL2}) with
\begin{align*}
\Om_{t}:=\{x\in \Vr\mid |f(x)|^{2}>t\},
\end{align*}
in the first step and
   the assumption of sparseness in the third step, we obtain
\begin{align*}
 \langle f,&(\deg +q) f\rangle-
k\|f\|^{2}=\int_{0}^{\infty}\Big(\deg(\Om_{t}) + q(\Om_t)-k
|\Om_{t}|\Big)dt
\\
&=\int_{0}^{\infty}\Big(2|\Er_{\Omega_{t}}|+|\partial\Om_{t}| + q(\Om_t)-k
|\Om_{t}|\Big)dt
\\
&\leq (1+a)\int_{0}^{\infty}|\partial\Om_{t}| +q(\Om_t) dt \\
&=\frac{(1+a )}{2}\sum_{x,y,x\sim
  y}\left| |f(x)|^{2}-|f(y)|^{2}\right| + (1+a )\sum_x
q(x) |f(x)|^2
\\
&\leq\frac{(1+a )}{2}\sum_{x,y,x\sim
  y}|(f(x)-f(y))(\overline {f(x)}+ \overline {f(y)})|+
  (1+a )\sum_x q(x) |f(x)|^2\\
&\leq \frac{(1+a )}{2} \left(\sum_{x,y,x\sim y} |f(x)-f(y)|^2
+   2\sum_x q(x)|f(x)|^2 \right)^{1/2}
\\
&\quad \quad\quad\quad\quad\quad\quad\times
\left(\sum_{x,y,x\sim y} |f(x)+f(y)|^2 +
  2\sum_x q(x)|f(x)|^2 \right)^{1/2}
\\\displaybreak
&=  (1+a) \langle f,(\Delta+q) f\rangle^{\frac{1}{2}}\big(2\langle
f,(\deg+q) f\rangle- \langle f,(\Delta+q)
f\rangle\big)^{\frac{1}{2}},
\end{align*}
where we used the Cauchy-Schwarz inequality in the last inequality and
basic algebraic manipulation in the last equality. Since the left hand
side is non-negative by the assumption $\langle f,(\deg+q)f\rangle\ge
k\|f\|^{2}$, we can take square roots on both sides. To shorten
notation, we assume for the rest of the proof $q\equiv 0$ since the
proof with $q\neq 0$ is completely analogous.

Reordering the terms, yields
\begin{align*}
 (1+a)^2 \langle f,\Delta f\rangle^{2}-2
 (1+a)^2\langle f,\deg  f\rangle \langle f,\Delta f\rangle+(\langle f,(\deg-k) f\rangle)^2\le0.
\end{align*}
Resolving the quadratic expression above gives,
\begin{align*}
 \langle f, \deg f \rangle -\sqrt{\de}\leq\langle f,\Delta
 f\rangle\leq   \langle f,\deg f \rangle +\sqrt{\de},
\end{align*}
with $$\delta:= \langle f, \deg f \rangle^2 - (1+a)^{-2} (\langle f,(\deg-k)  f \rangle)^{2}.$$

Using $4\xi\zeta\leq (\xi+\zeta)^2$, $\xi,\zeta\ge0$,  for all $0<\lambda <1$, we  estimate $\de$  as follows
\begin{align*}
(1+a)^{2}\de &=    (2a+a^2)  \langle f, \deg f \rangle^2 +
  k \|f\|^{2}\langle f, (2\deg-k) f \rangle
 \\
\nonumber
&\leq  (2a+a^2)  \langle f, \deg f \rangle^2  + \left( \lambda  \langle f, \deg f \rangle + \frac{k}{2} \left(\frac{1}{\lambda}- \lambda \right)\|f\|^2\right)^2\\
&\leq  \left(  \sqrt{ \lambda^ 2+2a+a^2}   \langle f, \deg f \rangle + \frac{k}{2} \left(\frac{1}{\lambda}- \lambda \right)\|f\|^2\right)^2.
\end{align*}

If $a=0$, i.e., the $k$-sparse case, then we take $\lambda=\tilde a$ to get
\begin{align*}
\de &\leq    k \|f\|^{2}\langle f, 2\deg f \rangle \leq
\Big(\tilde a \langle f, \deg f\rangle + \frac{k}{2} \left(\frac{1}{ \tilde a}- \tilde a \right)
    \|f\|^2   \Big)^2.
\end{align*}
As $k/ 2 \tilde a\ge 2(1-\tilde a)k$, this proves the desired inequality with $\tilde k = k/ 2 \tilde a$.

If $a>0$, we  take $\lambda= \min\left(\frac{1}{2},a\right)$ to get
\begin{align*}
(1+a)^{2}\de &=  \left(  \left(\sqrt{\min\left(\frac{1}{4}, a^2 \right)+2a+a^2} \right)  \langle f, \deg f \rangle + \frac{k}{2} \max\left(\frac{3}{2},\left(\frac{1}{a}- a\right) \right)\|f\|^2\right)^2.
\end{align*}
Keeping in mind the restriction  $\tilde k\ge2(1-\tilde a)k$ for the case $\langle f,(\deg+q)f\rangle< k\|f\|^{2}$, this gives the statement with the choice of $(\tilde a,\tilde k)$ in the statement of the lemma.
\end{proof}

The two lemmas above are sufficient to prove Theorem~\ref{t:form} for
the case $q\ge0$. An application of Lemma~\ref{l:formperturbation}
turns the lower bound of Lemma~\ref{l:sparse_implies_inequality} into
a corresponding lower bound. This straightforward argument does not
work for the upper bound. However, the following surprising lemma
shows that such a lower bound by $\deg$ automatically implies the
corresponding upper bound.
There is a deeper reason for this fact which  shows up in the context
of magnetic Schr\"odinger operators. We present the non-magnetic
version of the statement here for the sake of being self-contained in
this section. For the more conceptual and more general magnetic
version, we refer to Lemma~\ref{l:upsidedown}.

\begin{lemma}[Upside-Down-Lemma -- non-magnetic
    version]\label{l:upsidedown1}
 Let $\Gr:=(\Vr, \Er)$   be a graph and $q:\Vr
   \to \R$. Assume  there are $\tilde a\in(0,1),$ $\tilde k\geq
0$ such that for all $f\in \Cc_c(\Vr)$,
\begin{align*}
(1-\tilde a)\langle f, (\deg +q) f \rangle  -\tilde k\Vert
f\Vert^2\leq  \langle f,\Delta f +q f \rangle,
\end{align*}
then  for all $f\in \Cc_c(\Vr)$, we also have
\begin{align*} \nonumber
 \langle f,\Delta f +q f \rangle  \leq  (1+\tilde a)\langle f,
 (\deg +q) f \rangle + \tilde k\Vert f\Vert^2.
\end{align*}
\end{lemma}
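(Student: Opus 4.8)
The plan is to obtain the upper bound by feeding the modulus $|f|$ into the hypothesis. The mechanism is the polarization identity, valid edge by edge,
\begin{align*}
|f(x)+f(y)|^{2}+|f(x)-f(y)|^{2}=2|f(x)|^{2}+2|f(y)|^{2}.
\end{align*}
Summing over ordered neighbours $x\sim y$, halving, and using $\langle f,\Delta f\rangle=\tfrac12\sum_{x\sim y}|f(x)-f(y)|^{2}$ together with $\sum_{x\sim y}|f(x)|^{2}=\langle f,\deg f\rangle$, I get
\begin{align*}
\tfrac12\sum_{x\sim y}|f(x)+f(y)|^{2}=2\langle f,\deg f\rangle-\langle f,\Delta f\rangle.
\end{align*}
A one-line rearrangement then shows that the conclusion $\langle f,\Delta f+qf\rangle\le(1+\tilde a)\langle f,(\deg+q)f\rangle+\tilde k\Vert f\Vert^{2}$ is \emph{equivalent} to
\begin{align*}
\tfrac12\sum_{x\sim y}|f(x)+f(y)|^{2}\ge(1-\tilde a)\langle f,\deg f\rangle-\tilde a\langle f,qf\rangle-\tilde k\Vert f\Vert^{2},
\end{align*}
whereas the hypothesis, after moving the $q$-terms to one side, is the \emph{verbatim same} inequality with the left-hand side replaced by $\langle f,\Delta f\rangle=\tfrac12\sum_{x\sim y}|f(x)-f(y)|^{2}$. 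Thus hypothesis and goal share an identical right-hand side and differ only by the sign inside $|f(x)\pm f(y)|^{2}$.

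The key step, and the source of the lemma's ``surprise'', is to bound the signless sum below by its counterpart for $|f|$ through the reverse triangle inequality $|f(x)+f(y)|\ge\big|\,|f(x)|-|f(y)|\,\big|$. Squaring (both sides being nonnegative) and summing gives
\begin{align*}
\tfrac12\sum_{x\sim y}|f(x)+f(y)|^{2}\ge\tfrac12\sum_{x\sim y}\big(|f(x)|-|f(y)|\big)^{2}=\langle|f|,\Delta|f|\rangle.
\end{align*}
Since $|f|\in\Cc_c(\Vr)$, I now apply the hypothesis to $|f|$ and use that $\deg$, $q$ and the norm only see the modulus, i.e. $\langle|f|,\deg|f|\rangle=\langle f,\deg f\rangle$, $\langle|f|,q|f|\rangle=\langle f,qf\rangle$ and $\Vert|f|\Vert=\Vert f\Vert$. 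This yields
\begin{align*}
\langle|f|,\Delta|f|\rangle\ge(1-\tilde a)\langle f,\deg f\rangle-\tilde a\langle f,qf\rangle-\tilde k\Vert f\Vert^{2},
\end{align*}
and chaining the last two displays lands exactly on the inequality shown above to be equivalent to the conclusion.

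The only substantive ingredient is the passage to $|f|$ via the reverse triangle inequality; the rest is the polarization identity and bookkeeping, and no self-adjoint realisation of $\Delta+q$ is required, since the argument lives entirely on the quadratic forms over $\Cc_c(\Vr)$. I expect the main conceptual hurdle to be spotting that hypothesis and target differ only through the sign in $|f(x)\pm f(y)|^{2}$, which reduces the problem to the sign-robust estimate $\big|\,|f(x)|-|f(y)|\,\big|\le|f(x)+f(y)|$. This robustness is precisely what survives a magnetic phase, since $|f(x)-e^{\rmi\theta_{xy}}f(y)|\ge\big|\,|f(x)|-|f(y)|\,\big|$ still holds; that is why the same scheme will prove the general magnetic statement in Lemma~\ref{l:upsidedown}.
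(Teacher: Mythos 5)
Your proof is correct and is essentially the paper's argument: both rest on the identity $\langle f,(2\deg-\Delta)f\rangle=\tfrac12\sum_{x\sim y}|f(x)+f(y)|^{2}\ge\langle|f|,\Delta|f|\rangle$ followed by an application of the hypothesis to $|f|$. The paper phrases the final step as a direct rearrangement of $\langle f,(\Delta+q)f\rangle-2\langle f,(\deg+q)f\rangle\le-\langle|f|,(\Delta+q)|f|\rangle$ rather than via your explicit "same right-hand side, opposite sign" reformulation, but this is only a presentational difference.
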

\proof By a direct calculation we find for $f\in\Cc_c(\Vr)$
\begin{align*}
    \langle f, (2\deg-\Delta)f\rangle&=\frac{1}{2}\sum_{x,y\in\Vr,x\sim
      y}(2|f(x)|^{2}+ 2|f(y)|^{2})-|f(x)-f(y)|^{2})\\
    &=\frac{1}{2}\sum_{x,y,x\sim y}|f(x)+f(y)|^{2}
    \geq\frac{1}{2}\sum_{x,y,x\sim y}\left| |f(x)|-|f(y)|
    \right|^{2}
\\
&=\langle |f|, \Delta|f|\rangle.
\end{align*}
Adding $q$ to the inequality and using the assumption gives after reordering
\begin{align*}
   \langle f, (\Delta+q)f\rangle-2\langle f, (\deg+q)f\rangle&\leq
   -\langle|f|, (\Delta+q)|f|\rangle\\
   &\leq-(1-\tilde a)\langle |f|, (\deg+q)|f|\rangle +\tilde k\langle
   |f|,|f|\rangle\\
   &=-(1-\tilde a)\langle f,  (\deg+q)f\rangle +\tilde k\langle  f,f\rangle
\end{align*}
which yields the assertion.\qed

\begin{proof}[Proof of Theorem~\ref{t:form}]
The implication (i)$\Rightarrow$(iii) follows from
Lemma~\ref{l:sparse_implies_inequality} applied with $q_+$ and from
Lemma~\ref{l:formperturbation} with $q$. The implication
(iii)$\Rightarrow$(ii) follows from the  Upside-Down-Lemma above.
Furthermore, (ii)$\Rightarrow$(i) is implied by
Lemma~\ref{l:inequality_implies_sparse}. The equivalence
(ii)$\Leftrightarrow$(iv) follows from an application of the Closed
Graph Theorem, Theorem~\ref{t:imt}. Finally, the statements about
discreteness of spectrum and eigenvalue asymptotics follow from an
application of the Min-Max-Principle, Theorem~\ref{t:minmax}.
\end{proof}

\begin{proof}[Proof of Theorem~\ref{t:sparse}] (a) follows from Lemma~\ref{l:sparse_implies_inequality}. The other statements follow directly from Theorem~\ref{t:form}.
\end{proof}

As a corollary we can now determine the potentials in the class $\Kr_{0^+}$ explicitly and give necessary and sufficient
criteria for potentials being in $\Kr_{\al}$, $\al\in(0,1)$.

\begin{cor}\label{c:potentials}Let $(\Gr,q)$ be an   $(a,k)$-sparse
  graph for some $a,k\ge0$.
\begin{itemize}
  \item[(a)] The potential $q$  is in $\Kr_{0^+}$ if and only if  for
    all $\al\in (0,1)$ there is $\ka_\al\ge0$ such that
\begin{align*}
        q_{-}\leq \al (\deg+q_{+})+\ka_\al.
\end{align*}
  \item[(b)] Let $\al\in (0,1)$ and $\tilde a= \sqrt{\min({1/
        4,a^2})+2a+a^{2}}/(1+a)$ (as given by
    Lemma~\ref{l:sparse_implies_inequality}). If there is
    $\ka_{\al}\ge0$ such that
        $q_{-}\leq \al (\deg+q_{+})+\ka_{\al}$, then $q\in
        \Kr_{\al/(1-\tilde a)}$. On the other, hand if $q\in
        \Kr_{\al}$, then there is $\ka_{\al}\ge0$ such that
        $q_{-}\leq \al(1+\tilde a) (\deg+q_{+})+\ka_{\al}$.
\end{itemize}
\end{cor}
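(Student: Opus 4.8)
The plan is to deduce both statements from the two-sided form inequality of Lemma~\ref{l:sparse_implies_inequality}. The key preliminary observation is that $(a,k)$-sparseness depends only on $q_+$, so $(\Gr,q_+)$ is again $(a,k)$-sparse and Lemma~\ref{l:sparse_implies_inequality}, applied with the non-negative potential $q_+$, gives on $\Cc_c(\Vr)$
\[
(1-\tilde a)(\deg+q_+)-\tilde k\leq \Delta+q_+\leq (1+\tilde a)(\deg+q_+)+\tilde k
\]
with $\tilde a$ the constant recorded in the statement and some $\tilde k\geq 0$. Everything below is read as a chain of form inequalities on $\Cc_c(\Vr)$; at the very end I use that, for multiplication operators, a form inequality on $\Cc_c(\Vr)$ (tested against the indicators $\bone_{\{x\}}$) is equivalent to the corresponding pointwise inequality of functions, which is exactly what turns the abstract operator condition defining $\Kr_\al$ into the concrete functional bound involving $\deg+q_+$.

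For part (b), first direction, I would rearrange the lower bound into $\deg+q_+\leq (1-\tilde a)^{-1}(\Delta+q_+)+(1-\tilde a)^{-1}\tilde k$ and substitute it into the hypothesis $q_-\leq \al(\deg+q_+)+\ka_\al$. This immediately yields $q_-\leq \tfrac{\al}{1-\tilde a}(\Delta+q_+)+C$ with $C=\tfrac{\al\tilde k}{1-\tilde a}+\ka_\al$, i.e.\ $q\in\Kr_{\al/(1-\tilde a)}$. For the converse direction I would feed the upper bound $\Delta+q_+\leq (1+\tilde a)(\deg+q_+)+\tilde k$ into the defining inequality $q_-\leq \al(\Delta+q_+)+C_\al$ of $\Kr_\al$, obtaining $q_-\leq \al(1+\tilde a)(\deg+q_+)+(\al\tilde k+C_\al)$, which is the asserted functional bound with $\ka_\al=\al\tilde k+C_\al$. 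Each direction is a single substitution once the two-sided inequality is available.

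Part (a) then follows from (b) by adjusting the parameter, exploiting that $\tilde a\in(0,1)$ is fixed by the sparseness so that $\al\mapsto \al/(1-\tilde a)$ and $\al\mapsto \al(1+\tilde a)$ both sweep out all of $(0,1)$. For the ``if'' implication, given $\beta\in(0,1)$ I would apply the first direction of (b) with $\al:=\beta(1-\tilde a)\in(0,1)$; the hypothesized functional bound at level $\al$ gives $q\in\Kr_{\al/(1-\tilde a)}=\Kr_\beta$, and since $\beta$ is arbitrary, $q\in\Kr_{0^+}$. For the ``only if'' implication, given $\beta\in(0,1)$ I would apply the second direction of (b) with $\al:=\beta/(1+\tilde a)\in(0,1)$; from $q\in\Kr_\al$ it produces $q_-\leq \al(1+\tilde a)(\deg+q_+)+\ka_\al=\beta(\deg+q_+)+\ka_\al$.

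There is no serious obstacle here; the proof is essentially bookkeeping with the two-sided inequality. The only point demanding care is to keep the operator $\Delta+q_+$ (entering the definition of $\Kr_\al$) distinct from the multiplication operator $\deg+q_+$ (entering the functional criterion), and to make sure the passage between form and pointwise inequalities is legitimate, which it is because $\deg+q_+$ and $q_-$ are multiplication operators and testing on the indicators $\bone_{\{x\}}$ recovers the pointwise statement.
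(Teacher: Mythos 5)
Your proof is correct and follows essentially the same route as the paper: both derive the two-sided inequality $(1-\tilde a)(\deg+q_+)-\tilde k\leq \Delta+q_+\leq(1+\tilde a)(\deg+q_+)+\tilde k$ from Lemma~\ref{l:sparse_implies_inequality} (equivalently, Theorem~\ref{t:form}~(ii) for $q_+$) and then substitute it into the respective hypotheses, with (a) obtained from (b) by rescaling $\al$. Your explicit remarks on the form-versus-pointwise issue and the parameter bookkeeping in (a) are merely a more detailed write-up of what the paper leaves implicit.
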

\begin{proof}Using the assumption  $q_{-}\leq\al(\deg+q_{-})+\kappa_\al$ and
  the lower bound of Theorem~\ref{t:form}~(ii), we infer
\begin{align*}
q_{-}\leq \al (\deg+q_{+})+ \kappa_\al\leq \frac{\al}{(1-\tilde a)}
(\Delta+q_{+})+ \frac{\al}{(1-\tilde a)} \tilde k+\kappa_\al.
\end{align*}
Conversely, $q\in \Kr_{\al}$ and the upper bound of Theorem~\ref{t:form}~(ii) yields
\begin{align*}
q_{-}\leq \al(\Delta+q_{+}) + C_\al\leq\al{(1+\tilde a)} (\deg+q_{+})+
\al\tilde k+C_\al.
\end{align*}
Hence, (a) follows. For (b), notice that $\tilde a= \sqrt{\min({1/4,a^2})+2a+a^{2}}/(1+a)$ by Lemma~\ref{l:sparse_implies_inequality}.
\end{proof}


\section{Almost-sparseness and asymptotic of eigenvalues}\label{s:almost-sparse}

In this section we prove better estimates on the eigenvalue
asymptotics in a more specific situation. Looking at the inequality in
Theorem~\ref{t:form} (ii)  it seems desirable to have $\tilde a=0$. As
this is impossible when the degree is unbounded, we consider a
sequence of $\tilde a$ that tends to $0$. Keeping in mind
Remark~\ref{r:link}, this leads naturally to the following definition.

\begin{defi} Let  $\Gr:=(\Vr,\Er)$  be a graph and $q:\Vr\to\R$.
We say  $(\Gr, q)$ is  \emph{almost sparse}
if for all $\eps>0$ there is $k_{\eps}\ge0$ such that $(\Gr,q)$ is
$(\eps,k_{\eps})$-sparse, i.e.,   for any finite set $\Wr\subseteq
\Vr$ the induced subgraph $\Gr_{\Wr}:=(\Wr,\Er_{\Wr})$ satisfies
\[
2|\Er_{\Wr}| \leq k_\ve |\Wr| + \ve \left(|\partial \Wr|+q_{+}(\Wr)\right).
\]
\end{defi}

\begin{rem}(a) Every sparse graph $\Gr$ is almost sparse.\\
(b) For an  almost sparse graph  $(\Gr, q)$, every graph $(\Gr, q')$ with $q'\geq q$ is almost sparse.
\end{rem}

The main result of this section shows how the first order of the  eigenvalue asymptotics in the case of discrete spectrum can be determined for almost sparse graphs.

\begin{thm}\label{t:q-sparse-deg}
Let $\Gr:=(\Vr, \Er)$ be a graph and $q\in\Kr_{0^{+}}$. The
following  assertions are equivalent:
\begin{itemize}
  \item [(i)] $(\Gr, q)$ is almost sparse.
  \item [(ii)] For every $\eps>0$ there are $k_{\eps}\geq 0$ such that on $\Cc_c(\Vr)$
\begin{align*}\label{e:formath}
(1-\eps) (\deg+q) - k_{\eps} \leq \Delta + q \leq (1+\eps)
(\deg+q) +k_{\eps}.
\end{align*}
  \item [(iii)] For every $\eps>0$ there are $k_{\eps}\geq 0$ such that on $\Cc_c(\Vr)$
\begin{align*}
(1-\eps) (\deg+q) - k_{\eps} \leq \Delta + q.
\end{align*}
\end{itemize}
Moreover,
 $\Dc((\Delta + q)^{1/2})= \Dc((\deg +q)^{1/2})$ and
 the operator $\Delta+q$ has
purely discrete spectrum if and only if $\liminf_{|x|\to\infty}(\mathrm{deg}+q)(x)=\infty$. In this case, we have
\begin{align*}
\lim_{n\to \infty}
\frac{\lambda_n(\Delta+q)}{\lambda_n(\deg +q)}=1.
\end{align*}
\end{thm}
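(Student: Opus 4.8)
The plan is to prove the theorem by establishing the cycle of implications (i)$\Rightarrow$(ii)$\Rightarrow$(iii)$\Rightarrow$(i) and then deducing the form-domain equality, the discreteness criterion, and the sharp eigenvalue asymptotics as consequences. The core of the argument runs parallel to the proof of Theorem~\ref{t:form}, but now applied uniformly over a sequence $\eps\to0^+$. First I would treat the case $q\ge0$ and afterwards incorporate the negative part via the class $\Kr_{0^+}$.

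For (i)$\Rightarrow$(ii), I would fix $\eps>0$ and apply Lemma~\ref{l:sparse_implies_inequality} to the $(\eps,k_\eps)$-sparse pair $(\Gr,q_+)$. Since $\eps$ is now arbitrarily small (in particular $\eps<1$), the quantitative constants in that lemma give a lower and upper bound of the desired form $(1-\tilde a)(\deg+q_+)-\tilde k\le \Delta+q_+\le(1+\tilde a)(\deg+q_+)+\tilde k$, where crucially $\tilde a=\sqrt{\min(1/4,\eps^2)+2\eps+\eps^2}/(1+\eps)\simeq\sqrt{2\eps}\to0$ as $\eps\to0^+$ by the asymptotics recorded in Remark~\ref{r:link}. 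Thus by relabelling $\eps$ (replacing the old $\eps$ by a smaller one chosen so that $\tilde a\le\eps$) one obtains the two-sided inequality with the prefactor $(1\pm\eps)$ and some new constant $k_\eps$. To pass from $q_+$ to $q\in\Kr_{0^+}$, I would invoke Corollary~\ref{c:potentials}(a): since $q\in\Kr_{0^+}$, for each $\al\in(0,1)$ the negative part satisfies $q_-\le\al(\deg+q_+)+\ka_\al$, and Lemma~\ref{l:formperturbation} then converts the inequality for $q_+$ into one for $q$ at the cost of shrinking the prefactor by a factor $(1-\al)$ and enlarging the constant, which is harmless since $\al$ and $\eps$ are both free to tend to $0$. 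The implication (ii)$\Rightarrow$(iii) is trivial (drop the upper bound), and (iii)$\Rightarrow$(ii) follows at once from the Upside-Down-Lemma~\ref{l:upsidedown1} applied for each $\eps$ separately, so that in fact (ii) and (iii) are equivalent with the \emph{same} constants. Finally (iii)$\Rightarrow$(i) comes from Lemma~\ref{l:inequality_implies_sparse}: for each $\eps$ the lower bound with prefactor $(1-\eps)$ yields $(a,k)$-sparseness with $a=\eps/(1-\eps)$ and $k=k_\eps/(1-\eps)$; since $\eps/(1-\eps)\to0$ this exhibits $(\eps',k_{\eps'})$-sparseness for every $\eps'>0$, which is exactly almost sparseness.

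For the form-domain equality $\Dc((\Delta+q)^{1/2})=\Dc((\deg+q)^{1/2})$, I would note that the two-sided inequality in (ii) for any single fixed $\eps\in(0,1)$ already puts us in the setting of Theorem~\ref{t:form}, whence the equality of form domains follows from the Closed Graph Theorem (Theorem~\ref{t:imt}); almost sparseness is stronger than the hypothesis of Theorem~\ref{t:form}, so nothing new is needed here. The discreteness criterion likewise transfers verbatim from Theorem~\ref{t:form}: purely discrete spectrum of $\Delta+q$ is equivalent to that of $\deg+q$ by the comparison of forms, and $\deg+q$ being a multiplication operator has purely discrete spectrum exactly when $\liminf_{|x|\to\infty}(\deg+q)(x)=\infty$.

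The heart of the theorem, and the step I expect to be the main obstacle, is upgrading the two-sided bounds $1-\tilde a\le\liminf\lambda_n(\Delta+q)/\lambda_n(\deg+q)\le\limsup(\cdots)\le1+\tilde a$ from Theorem~\ref{t:form} to the genuine limit $\lim_n\lambda_n(\Delta+q)/\lambda_n(\deg+q)=1$. The point is that Theorem~\ref{t:form} only gives these ratio bounds for one fixed pair $(\tilde a,\tilde k)$, whereas almost sparseness furnishes, for \emph{every} $\eps>0$, a pair $(\eps,k_\eps)$ with the corresponding inequality (ii). By the Min-Max-Principle (Theorem~\ref{t:minmax}) applied to the inequality (ii) with parameter $\eps$, one gets
\begin{align*}
(1-\eps)\lambda_n(\deg+q)-k_\eps\le\lambda_n(\Delta+q)\le(1+\eps)\lambda_n(\deg+q)+k_\eps
\end{align*}
for all $n$. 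Dividing by $\lambda_n(\deg+q)$ and using that discreteness forces $\lambda_n(\deg+q)\to\infty$, the constant terms $k_\eps/\lambda_n(\deg+q)$ vanish as $n\to\infty$ for each fixed $\eps$, yielding
\begin{align*}
1-\eps\le\liminf_{n\to\infty}\frac{\lambda_n(\Delta+q)}{\lambda_n(\deg+q)}\le\limsup_{n\to\infty}\frac{\lambda_n(\Delta+q)}{\lambda_n(\deg+q)}\le1+\eps.
\end{align*}
The delicate point is that $k_\eps$ blows up as $\eps\to0$, so one cannot send $\eps\to0$ before sending $n\to\infty$; the order of the two limits matters. Since the displayed inequality holds for every $\eps>0$ and its left and right sides do not depend on $n$, one may now let $\eps\to0^+$ to squeeze both the $\liminf$ and the $\limsup$ to $1$, which gives the claimed limit.
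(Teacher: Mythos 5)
Your proposal is correct and follows essentially the same route as the paper, whose proof of this theorem is simply the remark that it is "a direct application of Theorem~\ref{t:form} if one keeps track of the constants given explicitly by Lemma~\ref{l:inequality_implies_sparse}, Lemma~\ref{l:sparse_implies_inequality} and Lemma~\ref{l:formperturbation}." Your expansion — in particular the observation that $\tilde a\simeq\sqrt{2\eps}\to 0$, the use of the Upside-Down-Lemma for the upper bound, and the order of limits ($n\to\infty$ before $\eps\to0^+$) in the eigenvalue asymptotics — is exactly the bookkeeping the authors leave implicit.
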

\begin{proof}
The statement is a direct application of Theorem~\ref{t:form} if one
keeps track of the constants given explicitly by
Lemma~\ref{l:inequality_implies_sparse},
Lemma~\ref{l:sparse_implies_inequality} and
Lemma~\ref{l:formperturbation}.
\end{proof}


\section{Magnetic Laplacians}\label{s:magnetic}
In this section, we consider magnetic Schr\"odinger operators. Clearly, every lower bound can be deduced from Kato's inequality. However, for the eigenvalue asymptotics we also need to prove an upper bound.

We fix a
phase
\[\theta:\Vr\times \Vr\rightarrow \R/2\pi\Z \;\mbox{ such that }\;
\theta(x,y)= - \theta(y,x).\] For a potential $q:\Vr\to[0,\infty)$ we consider the magnetic Schr\"odinger operator
$\Delta_\theta+q$ defined as
\begin{align*}
\Dc(\Delta_\theta+q):=\Big\{\ph\in\ell^{2}(\Vr)\mid& \Big(v\mapsto \sum_{x\sim y}(\ph(x)-e^{\rmi \theta({x,y})}\ph(x)) +
q(x)\varphi(x)\Big)\in\ell^{2}(\Vr)\Big\}\\
    (\Delta_\theta+q)\ph(x)&:=\sum_{x\sim y}(\ph(x)-e^{\rmi\theta({x,y})} \ph(y)) + q(x) \varphi(x).
\end{align*}
A computation for $\ph \in \Cc_c(\Vr)$ gives
\[\langle  \ph, (\Delta_\theta+q)\ph \rangle= \frac{1}{2} \sum_{x,y,
  x\sim y}\left|\ph(x) -e^{\rmi \theta(x,y)}\ph(y)
\right|^2+\sum_{x}q(x)|\ph(x)|^{2}.\]
The operator is non-negative and selfadjoint as it is essentially
selfadjoint on $\Cc_c(\Vr)$ (confer e.g.\ \cite{Gol2}). For $\al>0$,
let $\Kr^{\theta}_{\al}$ be the class of real-valued potentials $q$
such that $q_{-}\leq \al (\Delta_{\theta}+q_{+})+C_{\al}$ for some
$C_{\al}\ge0$. Denote
\[\Kr_{0^{+}}^{\theta}=\bigcap_{\al\in(0,1)}\Kr_{\al}^{\theta}.\]
Again, for $\al\in(0,1)$ and $q\in\Kr^{\theta}_{\al}$, we define
$\Delta_{\theta}+q$ to be the form sum of $\Delta_{\theta}+q_{+}$ and
$-q_{-}$. 

We present our result for magnetic Schr\"odinger operators
which has one implication from the equivalences of Theorem~\ref{t:form}
and Theorem~\ref{t:q-sparse-deg}.

\begin{thm}\label{t:q-sparse-deg-magne}
Let $\Gr:=(\Vr,\Er)$ be a graph, $\theta$ be a phase and
$q\in \Kr^{\theta}_{0^{+}}$ be a potential.
Assume $(\Gr,q)$ is $(a,k)$-sparse for some  $a,k\geq 0$.
Then, we have the following:
  \begin{itemize}
    \item [(a)] There are $\tilde a\in (0,1)$, $k\ge0$ such that
on $\Cc_c(\Vr)$
 \begin{equation*}\label{e:main-bound-sparse-magnetic}
  ({1-\tilde a})(\deg+q)-k\le  \Delta_{\theta}+q\le({1+\tilde a})
  (\deg+q) +k.
 \end{equation*}
    \item[(b)] $\Dc\left(|\Delta_{\theta}+q|^{1/2}\right)=
\Dc\left(|\mathrm{deg}+q|^{1/2}\right)$.
    \item[(c)] The operator $\Delta_{\theta}+q$ has
purely discrete spectrum if and only if
$$\liminf_{|x|\to\infty}(\mathrm{deg}+q)(x)=\infty.$$ In this case,
if $(\Gr,q)$ is additionally almost sparse, then
\begin{align*}
    \liminf_{\lm\to\infty} \frac{\lm_{n}(\Delta_{\theta}+q)}
    {{\lm}_{n}(\mathrm{deg}+q)}=1.
\end{align*}
  \end{itemize}
\end{thm}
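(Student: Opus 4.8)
The plan is to reduce the magnetic statement to the non-magnetic results already available, using Kato's inequality for the lower bound and the magnetic Upside-Down-Lemma (Lemma~\ref{l:upsidedown}, referenced in the excerpt) for the upper bound. First I would treat the case $q\ge0$ and deduce the general case $q\in\Kr^{\theta}_{0^+}$ afterwards via the form-perturbation lemma (Lemma~\ref{l:formperturbation}), exactly as in the proof of Theorem~\ref{t:form}. For part (a) with $q\ge0$, the lower bound
\begin{align*}
(1-\tilde a)(\deg+q)-\tilde k\le \Delta_\theta+q
\end{align*}
is obtained from the non-magnetic lower bound of Lemma~\ref{l:sparse_implies_inequality}: by Kato's inequality (the diamagnetic inequality) one has, for all $f\in\Cc_c(\Vr)$,
\begin{align*}
\langle |f|,\Delta|f|\rangle \le \langle f,\Delta_\theta f\rangle,
\end{align*}
since $\big||f(x)|-|f(y)|\big|\le\big|f(x)-e^{\rmi\theta(x,y)}f(y)\big|$ pointwise on each edge. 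Because $(\Gr,q)$ is $(a,k)$-sparse with $q_+=q$, Lemma~\ref{l:sparse_implies_inequality} gives the lower bound for $\Delta$ applied to $|f|$; combining these and noting $\langle|f|,(\deg+q)|f|\rangle=\langle f,(\deg+q)f\rangle$ yields the magnetic lower bound with the \emph{same} constants $(\tilde a,\tilde k)$ as in the non-magnetic case.

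The upper bound is the genuinely new ingredient, and it is where the magnetic Upside-Down-Lemma enters: applied to the magnetic lower bound just established, Lemma~\ref{l:upsidedown} automatically produces the matching upper bound
\begin{align*}
\Delta_\theta+q \le (1+\tilde a)(\deg+q)+\tilde k
\end{align*}
on $\Cc_c(\Vr)$. This completes (a) for $q\ge0$; for general $q\in\Kr^{\theta}_{0^+}$ I would split $q=q_+-q_-$, apply the above to $q_+$, and absorb $-q_-$ through form-boundedness with bound strictly below one, as done for the non-magnetic operator, adjusting $(\tilde a,\tilde k)$ according to Lemma~\ref{l:formperturbation}. I expect the main obstacle here to be bookkeeping: verifying that the form-perturbation argument, which was stated for $\Delta$, transfers verbatim to $\Delta_\theta$, i.e.\ that the KLMN construction of $\Delta_\theta+q$ and the class $\Kr^{\theta}_\al$ behave identically to the non-magnetic setting. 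Since the quadratic form of $\Delta_\theta+q_+$ dominates $-q_-$ under precisely the same hypothesis, this should go through without change.

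Given (a), parts (b) and (c) follow by the abstract functional-analytic machinery already invoked for Theorem~\ref{t:form}. The equality of form domains in (b),
\begin{align*}
\Dc\big(|\Delta_\theta+q|^{1/2}\big)=\Dc\big(|\deg+q|^{1/2}\big),
\end{align*}
comes from the two-sided bound in (a) together with the Closed Graph Theorem (Theorem~\ref{t:imt}), and the characterization of discreteness of spectrum plus the eigenvalue ratio bounds
\begin{align*}
1-\tilde a\le \liminf_{n\to\infty}\frac{\lambda_n(\Delta_\theta+q)}{\lambda_n(\deg+q)}\le \limsup_{n\to\infty}\frac{\lambda_n(\Delta_\theta+q)}{\lambda_n(\deg+q)}\le 1+\tilde a
\end{align*}
follow from the Min-Max Principle (Theorem~\ref{t:minmax}). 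For the sharp limit in (c) under the additional almost-sparseness hypothesis, I would run the same argument with a sequence $\eps\to0^+$: almost sparseness provides, for each $\eps>0$, constants $k_\eps$ giving the bound in (a) with $\tilde a$ replaced by an arbitrarily small $\eps$ (after the form-perturbation adjustment, whose effect on $\tilde a$ tends to $0$ with $\eps$ by Remark~\ref{r:link}). Passing to the limit in the Min-Max ratio estimates then squeezes the limit to $1$, exactly as in Theorem~\ref{t:q-sparse-deg}.
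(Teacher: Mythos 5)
Your plan is correct and follows essentially the same route as the paper: Kato's inequality for the magnetic lower bound, the magnetic Upside-Down-Lemma (Lemma~\ref{l:upsidedown}) for the upper bound, and the Closed Graph Theorem and Min-Max Principle for (b) and (c). The one point where you diverge, and where your ``bookkeeping'' remark conceals the actual content, is the treatment of $q_{-}$. The paper does not perturb the magnetic form directly: it first proves $\Kr_{0^{+}}^{\theta}=\Kr_{0^{+}}$ for $(a,k)$-sparse graphs (Lemma~\ref{l:classes2}, which itself relies on the Upside-Down-Lemma), feeds $q\in\Kr_{0^{+}}$ into the non-magnetic Theorem~\ref{t:form} to get the lower bound for $\Delta+q$ with the full potential, and only then applies Lemma~\ref{l:upsidedown} to obtain both magnetic bounds at once. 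Your alternative --- establish the two-sided magnetic bound for $q_{+}$ and then absorb $-q_{-}$ --- works verbatim for the lower bound, since Lemma~\ref{l:formperturbation} is an abstract statement about forms and the hypothesis $q_{-}\leq\al(\Delta_{\theta}+q_{+})+C_{\al}$ is exactly what $\Kr_{\al}^{\theta}$ provides. But that lemma only converts \emph{lower} bounds, and the upper bound for $\Delta_{\theta}+q$ is equivalent, via $\Delta_{\theta}=2\deg-\Delta_{\theta+\pi}$, to a lower bound for $\Delta_{\theta+\pi}+q$, whose perturbation requires $q\in\Kr_{0^{+}}^{\theta+\pi}$ --- precisely the comparison of classes across phases that Lemma~\ref{l:classes2} supplies and that does not hold for free. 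This is fixable either by invoking that lemma (the paper's choice) or by a short direct estimate bounding $\tilde a\, q_{-}$ by a small multiple of $\deg+q$ plus a constant, but it is not ``without change''. A further minor imprecision: Lemma~\ref{l:upsidedown} takes the \emph{non-magnetic} lower bound as its hypothesis, not ``the magnetic lower bound just established''; since you do have the non-magnetic bound in hand from Lemma~\ref{l:sparse_implies_inequality}, this is harmless.
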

\begin{rem}
(a) The constants $\tilde a$ and $\tilde k$ can chosen to be the
same  as the ones we obtained in the proof of Theorem \ref{t:form},
i.e., these constants are explicitly given combining
Lemma~\ref{l:sparse_implies_inequality} and Lemma~\ref{l:formperturbation}.\\
(b) Statement (a) and (b) of the theorem above remain true  for
$q\in \Kr_{\al}$, $\al\in (0,1)$ since $\Kr_{\al}\subseteq
\Kr_{\al}^{\theta}$ by Kato's inequality below.
\end{rem}

We will prove  the theorem by applying Theorem~\ref{t:form} and
Theorem~\ref{t:q-sparse-deg}.  The considerations heavily rely on
Kato's inequality and  a conceptual version of the
Upside-Down-Lemma, Lemma~\ref{l:upsidedown1}, which shows that a
lower bound for $\Delta+q$ implies an upper and lower bound on
$\Delta_{\theta}+q$. Secondly, in Theorem~\ref{t:q-sparse-deg}
potentials in $\Kr_{0^{+}}$ are considered, while here we start with
the class $\Kr_{0^{+}}^{\theta}$. However, it can be seen that
$\Kr_{0^{+}}=\Kr_{0^+}^{\theta}$ in the  case of $(a,k)$-sparse
graph, see Lemma~\ref{l:classes2} below.

As mentioned above a key fact is Kato's inequality, see e.g.
\cite[Lemma~2.1]{DM} or \cite[Theorem~5.2.b]{GKS}.

\begin{pro}[Kato's inequality]\label{l:Kato}
Let $\Gr:=(\Vr,\Er)$ be a graph, $\theta$ be a phase and  $q:\Vr\to\R$. For all
$f\in \cC_c(\Vc)$, we have
\begin{align*}
  \langle
    |f|,(\Delta|f|+q|f|)\rangle \leq   \langle
    f,(\Delta_{\theta}f+qf)\rangle.
\end{align*}In particular, for all $\al > 0$
\begin{align*}
    \Kr_{\al}\subseteq\Kr_{\al}^{\theta}   \mbox{ and }
      \Kr_{0^+}\subseteq\Kr_{0^+}^{\theta}.
\end{align*}
\end{pro}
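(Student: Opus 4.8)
The plan is to reduce the claimed operator inequality to a pointwise (edgewise) estimate, the discrete analogue of the diamagnetic inequality. First I would observe that the potential contributions on the two sides coincide: since $q$ is real and multiplication by $q$ depends only on $|f|^{2}$, one has $\sum_{x}q(x)|f(x)|^{2}=\sum_{x}q(x)\big||f|(x)\big|^{2}$, so the terms $\langle|f|,q|f|\rangle$ and $\langle f,qf\rangle$ are identical. Everything therefore comes down to comparing the two Dirichlet-type energies, for which I would use the explicit quadratic-form expressions recorded just above the statement.

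The core step is the edgewise inequality
$$\big||f(x)|-|f(y)|\big|^{2}\leq \big|f(x)-e^{\rmi\theta(x,y)}f(y)\big|^{2}$$
for each directed edge $x\sim y$. I would prove it by expanding both sides: the difference equals $2|f(x)||f(y)|-2\,\mathrm{Re}\big(\overline{f(x)}\,e^{\rmi\theta(x,y)}f(y)\big)$, which is non-negative because $\mathrm{Re}(z)\leq|z|$ and $\big|\overline{f(x)}\,e^{\rmi\theta(x,y)}f(y)\big|=|f(x)||f(y)|$. Summing this over all non-oriented edges with the prefactor $\tfrac12$ and adding the common potential term yields $\langle|f|,(\Delta|f|+q|f|)\rangle\leq\langle f,(\Delta_{\theta}f+qf)\rangle$, which is the assertion. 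The key subtlety to keep in mind is that the comparison must be carried out edge by edge, before summing, rather than at the level of the operators.

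For the inclusions of potential classes, I would apply the inequality just proved with $q_{+}$ in place of $q$. If $q\in\Kr_{\al}$, then testing the defining inequality $q_{-}\leq\al(\Delta+q_{+})+C_{\al}$ against $|f|$ gives
$$\langle f,q_{-}f\rangle=\langle|f|,q_{-}|f|\rangle\leq\al\langle|f|,(\Delta+q_{+})|f|\rangle+C_{\al}\big\||f|\big\|^{2}\leq\al\langle f,(\Delta_{\theta}+q_{+})f\rangle+C_{\al}\|f\|^{2},$$
where I use $\langle f,q_{-}f\rangle=\langle|f|,q_{-}|f|\rangle$ and $\big\||f|\big\|=\|f\|$. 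Hence $q\in\Kr_{\al}^{\theta}$, and intersecting over $\al\in(0,1)$ yields $\Kr_{0^{+}}\subseteq\Kr_{0^{+}}^{\theta}$.

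I do not expect a genuine obstacle here: the only substantive ingredient is the edgewise diamagnetic estimate, and the passage to the potential classes is a direct consequence obtained by testing against $|f|$ and exploiting that the multiplication operator $q_{-}$ is insensitive to phases.
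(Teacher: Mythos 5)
Your proof is correct and matches the paper's approach: the paper simply states that the inequality follows from ``a direct calculation'' and that the class inclusions are an ``immediate consequence,'' and your edgewise estimate $\bigl||f(x)|-|f(y)|\bigr|^{2}\leq |f(x)-e^{\rmi\theta(x,y)}f(y)|^{2}$ (via $\mathrm{Re}(z)\leq|z|$) together with testing the defining inequality of $\Kr_{\al}$ against $|f|$ is exactly that calculation written out. No gaps.
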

\begin{proof}
The proof of the inequality can be obtained by a direct
calculation. The second statement is an immediate consequence.
\end{proof}

The next lemma is a rather surprising observation. It is the magnetic version of the Upside-Down-Lemma, Lemma~\ref{l:upsidedown1}.

\begin{lemma}[Upside-Down-Lemma -- magnetic version]\label{l:upsidedown}
Let $\Gr:=(\Vr,\Er)$ be a graph, $\theta$ be a phase and $q:\Vr\to
\R$ be a potential.  Assume that there are $\tilde
a\in(0,1)$ and $\tilde k\geq 0$ such that
{for all $f\in\Cc_c(\Vr)$, we have
\begin{align*}
(1-\tilde a)\langle f,(\deg + q) f \rangle -\tilde k \Vert
f\Vert^2\leq \langle f, \Delta f +q f\rangle
\end{align*}
then for all $f\in\Cc_c(\Vr)$, we also have
\begin{align*}(1-\tilde a)\langle f,(\deg + q) f \rangle -\tilde k
  \Vert f\Vert^2 &\leq \langle f, \Delta_\theta f +q
  f\rangle\leq(1+\tilde a)\langle f, (\deg + q) f \rangle +\tilde k
  \Vert f\Vert^2.
\end{align*}
}
\end{lemma}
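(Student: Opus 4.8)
The plan is to mimic the structure of the non-magnetic Upside-Down-Lemma (Lemma~\ref{l:upsidedown1}), the key new ingredient being Kato's inequality (Proposition~\ref{l:Kato}), which replaces the pointwise diamagnetic comparison $\langle|f|,\Delta|f|\rangle\le\langle f,(2\deg-\Delta)f\rangle$ that was available in the non-magnetic case. First I would establish the \emph{lower} bound. By Kato's inequality, for all $f\in\Cc_c(\Vr)$ we have $\langle|f|,(\Delta+q)|f|\rangle\le\langle f,(\Delta_\theta+q)f\rangle$. Applying the hypothesis to $|f|\in\Cc_c(\Vr)$ gives
\begin{align*}
(1-\tilde a)\langle|f|,(\deg+q)|f|\rangle-\tilde k\||f|\|^2\le\langle|f|,(\Delta+q)|f|\rangle.
\end{align*}
Since $\deg+q$ and the potential act by multiplication by nonnegative functions (on $q_+$) and the magnetic form only affects off-diagonal phases, one has $\langle|f|,(\deg+q)|f|\rangle=\langle f,(\deg+q)f\rangle$ and $\||f|\|^2=\|f\|^2$; chaining these gives the desired lower bound for $\Delta_\theta+q$.

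The \emph{upper} bound is the part requiring the genuinely new idea, and I expect it to be the main obstacle. The trick in Lemma~\ref{l:upsidedown1} was the algebraic identity $\langle f,(2\deg-\Delta)f\rangle=\tfrac12\sum_{x\sim y}|f(x)+f(y)|^2\ge\langle|f|,\Delta|f|\rangle$. In the magnetic setting the natural replacement is the \emph{twisted} operator: a direct computation should yield
\begin{align*}
\langle f,(2\deg-\Delta_\theta)f\rangle=\frac12\sum_{x,y,\,x\sim y}\bigl|f(x)+e^{\rmi\theta(x,y)}f(y)\bigr|^2,
\end{align*}
because $2\deg$ contributes $\tfrac12\sum_{x\sim y}(|f(x)|^2+|f(y)|^2)\cdot 2$ and subtracting the magnetic Dirichlet form completes the square with the phase $e^{\rmi\theta(x,y)}$. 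The key estimate is then the pointwise bound $|f(x)+e^{\rmi\theta(x,y)}f(y)|\ge\bigl||f(x)|-|f(y)|\bigr|$ (reverse triangle inequality, valid since $|e^{\rmi\theta}|=1$), which gives
\begin{align*}
\langle f,(2\deg-\Delta_\theta)f\rangle\ge\frac12\sum_{x,y,\,x\sim y}\bigl||f(x)|-|f(y)|\bigr|^2=\langle|f|,\Delta|f|\rangle.
\end{align*}
This is exactly the magnetic analogue of the inequality in Lemma~\ref{l:upsidedown1}; the phase simply drops out under the absolute values, which is the surprising point.

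Finally I would combine the two ingredients as in the non-magnetic proof. Adding the potential $q$ and rearranging the displayed inequality $\langle|f|,(\Delta+q)|f|\rangle\le\langle f,(2\deg-\Delta_\theta)f\rangle+\langle f,qf\rangle$ gives, after using $\langle f,qf\rangle=\langle|f|,q|f|\rangle$,
\begin{align*}
\langle f,(\Delta_\theta+q)f\rangle-2\langle f,(\deg+q)f\rangle\le-\langle|f|,(\Delta+q)|f|\rangle.
\end{align*}
Now applying the hypothesis to $|f|$ on the right-hand side, namely $-\langle|f|,(\Delta+q)|f|\rangle\le-(1-\tilde a)\langle|f|,(\deg+q)|f|\rangle+\tilde k\||f|\|^2$, and again converting the $|f|$-brackets back to $f$-brackets, yields $\langle f,(\Delta_\theta+q)f\rangle\le(1+\tilde a)\langle f,(\deg+q)f\rangle+\tilde k\|f\|^2$, which is the claimed upper bound. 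The whole argument is thus structurally identical to Lemma~\ref{l:upsidedown1}, with Kato's inequality supplying the lower bound and the twisted completion-of-squares identity supplying the upper bound; the only subtlety to verify carefully is that each diagonal quantity ($\deg$, $q$, and the $\ell^2$ norm) is genuinely insensitive to replacing $f$ by $|f|$, which holds precisely because the magnetic phase lives only on the off-diagonal terms.
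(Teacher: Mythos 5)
Your proof is correct and essentially the same as the paper's: the lower bound via Kato's inequality applied to $|f|$, and the upper bound via the observation that $2\deg-\Delta_\theta=\Delta_{\theta+\pi}$ bounds $\langle|f|,\Delta|f|\rangle$ from above. Your explicit ``twisted completion of squares plus reverse triangle inequality'' computation is precisely Kato's inequality for the shifted phase $\theta+\pi$, which is how the paper states the same step.
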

\begin{proof} The lower bound follows directly from Kato's
inequality and the lower bound from the assumption (since $\langle f,(\deg+q)f\rangle=\langle |f|,(\deg+q)|f|\rangle$ for all $f\in \Cc_c(\Vr)$). Now, observe that for all $\theta$
\begin{align*}
    \Delta_{\theta}=2\deg-\Delta_{\theta+\pi}.
\end{align*}
So, the upper bound for $\Delta_{\theta}+q$ follows from the lower bound of $\Delta_{\theta+\pi}+q$ which we deduced from Kato's inequality.
\end{proof}

The lemma above allows to relate the classes $\Kr_{\al}$ and $\Kr_{\al}^{\theta}$ for $(a,k)$-sparse graphs.

\begin{lemma}\label{l:classes2}For $a,k\ge0$ let $\Gr:=(\Vr,\Er)$ be an
  $(a,k)$-sparse graph, $\theta$ be a phase and $\al>0$. Then,
\begin{align*}
    \Kr_{\al}^{\theta}\subseteq \Kr_{\al'}, \mbox{ for  } \al'=
\frac{1+\tilde a}{1-\tilde a}\, \al,
\end{align*}
where $\tilde a$ is given in Lemma~\ref{l:sparse_implies_inequality}. In particular,
\begin{align*}
        \Kr_{0^{+}}^{\theta}= \Kr_{0^{+}}.
\end{align*}
Moreover, if  $(\Gr, q)$ is almost-sparse, then
\[\Kr_\al^\theta \subseteq \Kr_{\al'}, \mbox{ for all } \al'> \al.\]
\end{lemma}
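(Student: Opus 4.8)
The plan is to combine the two-sided control of both $\Delta+q_+$ and $\Delta_\theta+q_+$ by $\deg+q_+$ with the defining inequality of $\Kr^\theta_\al$, and then to track the resulting form-bound constant.

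First I would apply Lemma~\ref{l:sparse_implies_inequality} to the non-negative potential $q_+$: since $\Gr$ is $(a,k)$-sparse and $q_+\ge 0$, the pair $(\Gr,q_+)$ is again $(a,k)$-sparse, so on $\Cc_c(\Vr)$ one has $(1-\tilde a)(\deg+q_+)-\tilde k\le\Delta+q_+\le(1+\tilde a)(\deg+q_+)+\tilde k$ with the explicit constants $\tilde a,\tilde k$ of that lemma. The lower half of this is precisely the hypothesis of the magnetic Upside-Down-Lemma (Lemma~\ref{l:upsidedown}) applied with the potential $q_+$, which then delivers the two-sided bound $(1-\tilde a)(\deg+q_+)-\tilde k\le\Delta_\theta+q_+\le(1+\tilde a)(\deg+q_+)+\tilde k$ on $\Cc_c(\Vr)$. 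This is the crux of the argument: Kato's inequality (Proposition~\ref{l:Kato}) by itself only bounds $\Delta_\theta+q_+$ from below, whereas it is the \emph{upper} bound for $\Delta_\theta+q_+$ that will control $q_-$, and this upper bound is exactly what the Upside-Down-Lemma manufactures out of the non-magnetic lower bound.

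Next I would take $q\in\Kr^\theta_\al$, that is $q_-\le\al(\Delta_\theta+q_+)+C_\al$ on $\Cc_c(\Vr)$. Feeding in the upper bound $\Delta_\theta+q_+\le(1+\tilde a)(\deg+q_+)+\tilde k$ and then the rearranged lower bound $\deg+q_+\le(1-\tilde a)^{-1}(\Delta+q_+)+(1-\tilde a)^{-1}\tilde k$ yields $q_-\le\frac{1+\tilde a}{1-\tilde a}\,\al\,(\Delta+q_+)+C_{\al'}$ for a suitable constant $C_{\al'}\ge0$. This is exactly the statement $q\in\Kr_{\al'}$ with $\al'=\frac{1+\tilde a}{1-\tilde a}\al$; only the routine collection of the additive terms into $C_{\al'}$ remains.

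For the equality $\Kr^\theta_{0^+}=\Kr_{0^+}$, the inclusion $\Kr_{0^+}\subseteq\Kr^\theta_{0^+}$ is immediate from Proposition~\ref{l:Kato}; for the reverse, given $q\in\Kr^\theta_{0^+}$ and any $\beta\in(0,1)$, I would set $\al=\frac{1-\tilde a}{1+\tilde a}\beta\in(0,1)$, so that $q\in\Kr^\theta_\al$ and the first inclusion places $q$ in $\Kr_{\frac{1+\tilde a}{1-\tilde a}\al}=\Kr_\beta$; intersecting over $\beta\in(0,1)$ gives $q\in\Kr_{0^+}$. Finally, for the almost-sparse refinement the very same computation applies with $(\Gr,q_+)$ being $(\eps,k_\eps)$-sparse for each $\eps>0$, the associated constant $\tilde a=\tilde a(\eps)$ of Lemma~\ref{l:sparse_implies_inequality} tending to $0$ as $\eps\to0$. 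Hence $\frac{1+\tilde a(\eps)}{1-\tilde a(\eps)}\to1$, and given any $\al'>\al$ I would choose $\eps$ small enough that $\frac{1+\tilde a(\eps)}{1-\tilde a(\eps)}\al\le\al'$, obtaining $\Kr^\theta_\al\subseteq\Kr_{\frac{1+\tilde a(\eps)}{1-\tilde a(\eps)}\al}\subseteq\Kr_{\al'}$, where the last inclusion uses that $\Kr_{\gamma}\subseteq\Kr_{\al'}$ whenever $\gamma\le\al'$ since $\Delta+q_+\ge0$.
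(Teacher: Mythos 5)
Your proposal is correct and follows essentially the same route as the paper: apply Lemma~\ref{l:sparse_implies_inequality} to $q_+$, upgrade the resulting lower bound to a two-sided bound on $\Delta_\theta+q_+$ via the magnetic Upside-Down-Lemma, re-insert the lower bound to dominate $\deg+q_+$ by $(1-\tilde a)^{-1}(\Delta+q_+)$ plus a constant, and conclude $\Kr_\al^\theta\subseteq\Kr_{\al'}$ with $\al'=\frac{1+\tilde a}{1-\tilde a}\al$; Kato's inequality gives the reverse inclusion for $\Kr_{0^+}$, and $\tilde a(\eps)\to 0$ handles the almost-sparse case. Your write-up is in fact slightly more explicit than the paper's (e.g.\ the choice $\al=\frac{1-\tilde a}{1+\tilde a}\beta$ for the $\Kr_{0^+}$ equality and the monotonicity $\Kr_\gamma\subseteq\Kr_{\al'}$ for $\gamma\le\al'$), but the argument is the same.
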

\begin{proof}
Let $q\in \Kr_\al^\theta$. Applying Lemma~\ref{l:sparse_implies_inequality}, we get $\Delta+q_{+}\geq(1-\tilde a)(\deg+q_{+})-\tilde k$. Now, by the virtue of the Upside-Down-Lemma, Lemma~\ref{l:upsidedown}, we infer
\begin{align*}
\Delta_\theta +q_+ &\leq (1+\tilde a)(\deg +q_+) +\tilde k\leq
\frac{1+\tilde a}{1-\tilde a} (\Delta +q_+)+ \frac{2}{1-\tilde a}\tilde k
\end{align*}
which implies the first statement and
$\Kr_{0^{+}}^{\theta}\subseteq \Kr_{0^{+}}$. The reverse inclusion
$\Kr_{0^{+}}^{\theta}\supseteq \Kr_{0^{+}}$ follows from Kato's
inequality, Lemma~\ref{l:Kato}. For almost sparse graphs $a$ can be
chosen arbitrary small and accordingly $\tilde a$ (from
Lemma~\ref{l:sparse_implies_inequality}) becomes arbitrary
small. Hence, the statement $\Kr_\al^\theta \subseteq \Kr_{\al'},$ for
$\al'> \al$ follows from the inequality above.
\end{proof}

\begin{proof}[Proof of Theorem~\ref{t:q-sparse-deg-magne}] Let
$q\in \Kr_{0^{+}}^{\theta}$. By Lemma~\ref{l:classes2}, $q\in
\Kr_{0^{+}}$. Thus, (a) follows from
  Theorem~\ref{t:form} and Lemma~\ref{l:upsidedown}. Using (a)
  statement (b) follows from an application of the Closed Graph
  Theorem, Theorem~\ref{t:imt} and statement (c) follows from an
  application of the Min Max Principle, Theorem~\ref{t:minmax}.
\end{proof}

\begin{rem}
Instead of using Kato's inequality one can also reproduce the proof of
Lemma~\ref{l:sparse_implies_inequality} using the following estimate
\begin{align*}
    ||f(x)|^{2}-|f(y)|^{2}|\leq |(f(x)-e^{\rmi \theta(x,y)}f(y))(\overline{f(x)}+e^{-\rmi \theta(x,y)}\overline {f(y)})|.
\end{align*}
So, we infer the key estimate:
\begin{align*}
 \langle f,&(\deg +q) f\rangle-
k\|f\|^{2}
\\
&\leq  (1+a) \langle f,(\Delta_\theta+q)
f\rangle^{\frac{1}{2}}\langle f,(\Delta_{\theta+\pi}+q)
f\rangle^{\frac{1}{2}},
\\
&=
(1+a) \langle f,(\Delta_\theta+q) f\rangle^{\frac{1}{2}}\big(2\langle
f,(\deg+q) f\rangle- \langle f,(\Delta_\theta+q) f\rangle\big)^{\frac{1}{2}}.
\end{align*}
The rest of the proof is analogous.
\end{rem}

It can be observed that unlike in Theorem~\ref{t:form} or Theorem~\ref{t:q-sparse-deg} we do not have an equivalence in the theorem above. A reason for this seems to be  that our definition of sparseness does not involve the magnetic potential. This direction shall be pursued in the future. Here, we restrict ourselves to some  remarks on the perturbation theory in the context of Theorem~\ref{t:q-sparse-deg-magne} above.

\begin{rem}\label{r:perturbation}(a) If the inequality Theorem~\ref{t:q-sparse-deg-magne}~(a) holds for some $\theta$, then the inequality holds with the same constants for $-\theta$ and $\theta\pm\pi$. This can be seen by the fact $\Delta_{\theta+\pi}=2\deg-\Delta$ and $\langle f,\Delta_{\theta}f\rangle=\langle \overline{f},\Delta_{-\theta}\overline{f}\rangle$ while $\langle f,\deg f\rangle=\langle \overline{f},\deg\overline{f}\rangle$ for $f\in \cC_{c}(\Vr)$.

(b) The set of $\theta$ such that
Theorem~\ref{t:q-sparse-deg-magne}~(a)
holds true for some fixed $\tilde a$ and $\tilde k$ is closed in the product
topology, i.e., with respect to  pointwise convergence. This follows as $\langle f,\Delta_{\theta_{n}}f\rangle\to \langle f,\Delta_{\theta}f\rangle$ if $\theta_{n}\to\theta$, $n\to\infty$, for fixed $f\in \cC_{c}(\Vr)$.

(c) For two phases $\theta$ and $\theta'$ let $h(x)=\max_{y\sim x}| \theta(x,y)-\theta'(x,y)|$. By a straight forward estimate $\limsup_{|x|\to\infty}h(x)=0$ implies that for every $\eps>0$ there is $C\ge0$ such that
\begin{align*}
    -\eps \deg -C\leq \Delta_{\theta}-\Delta_{\theta'}\leq \eps\deg+C
\end{align*}
on $\cC_{c}(\Vr)$. We discuss three consequences of this inequality:

First of all, this inequality immediately yields that if   $\Dc(\Delta_{\theta}^{1/2})=
\Dc(\mathrm{deg}^{1/2})$ then $\Dc(\Delta_{\theta'}^{1/2})=
\Dc(\mathrm{deg}^{1/2})$ (by the KLMN Theorem, see {e.g.,}
\cite[Theorem~X.17]{RS}) which in turn yields
equality of the form domains of $\Delta_{\theta}$ and $\Delta_{\theta'}$.

Secondly, combining this inequality with Theorem~\ref{t:q-sparse-deg}
we obtain the following: If  $\limsup_{|x|\to\infty}\max_{y\sim x}|\theta(x,y)|=0$
and for every $\eps>0$ there is $k_{\eps}\ge0$  such that
\begin{align*}
    (1-\eps)\deg-k_{\eps}\leq\Delta_{\theta}\leq     (1+\eps)\deg+k_{\eps}^,
\end{align*}
then  the graph is almost sparse and in consequence the inequality in Theorem~\ref{t:q-sparse-deg-magne}~(a) holds for any phase.

Thirdly, using the techniques in the proof of \cite[Proposition~5.2]{Gol2} one shows that the essential spectra of $\Delta_{\theta}$ and $\Delta_{\theta'}$ coincide. With slightly more effort and the help of the Kuroda-Birman Theorem, \cite[Theorem~XI.9]{RS} one can  show that if  $h\in \ell^{1}(\Vr)$, then even the absolutely continuous spectra of $\Delta_{\theta}$ and $\Delta_{\theta'}$  coincide.
\end{rem}

\section{Isoperimetric estimates  and sparseness}\label{s:cheeger}
In this section we relate the concept of sparseness with the  concept of isoperimetric estimates.
First, we present a result which should be viewed in the light of Theorem~\ref{t:form} as it points out in which sense isoperimetric estimates are stronger than our notions of sparseness. In the second subsection, we present a result related to Theorem~\ref{t:q-sparse-deg}. Finally, we present a concrete comparison of sparseness and isoperimetric estimates. As this section is of a more geometric flavor we restrict ourselves to the case of potentials $q:\Vr\to[0,\infty)$.

\subsection{Isoperimetric estimates}
Let  $\Ur\subseteq \Vr$ and define the \emph{Cheeger} or
\emph{isoperimetric constant} of $\Ur$ by
\[\al_{\Ur}:=\inf_{\Wr\subset \Ur\,{\scriptsize\mbox{finite}}}
\frac{|\partial \Wr|+q(\Wr)}{\deg(\Wr)+q(\Wr)}.\]
In the case where $\deg(\Wr)+q(\Wr)=0$, for instance when $W$ is
  an isolated point, by convention the above quotient is set to be equal
  to 0,  Note that $\alpha_{\Ur}\in [0,1) $.

The following theorem illustrates in which sense positivity of the
Cheeger constant is linked with $(a,0)$-sparseness. We refer to
Theorem \ref{t:sparse-isop-q0} for precise constants.

\begin{thm}\label{p:cheeger-form} Given $\Gr:=(\Vr, \Er)$ a graph and
  $q:\Vr\to [0,\infty)$. The following assertions are
  equivalent
\begin{itemize}
  \item [(i)] $\alpha_{\Vr}>0.$
  \item [(ii)]
  There is $ \tilde a \in(0,1)$
\begin{align*}
(1- \tilde a) (\deg+q) \leq \Delta + q \leq (1+ \tilde a)(\deg+q).
\end{align*}
  \item [(iii)] There is $\tilde a\in(0,1)$ such that
  \begin{align*}
    (1-\tilde a) (\deg+q) \leq \Delta + q.
  \end{align*}
\end{itemize}
\end{thm}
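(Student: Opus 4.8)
The plan is to establish the cycle of implications (i)$\Rightarrow$(ii)$\Rightarrow$(iii)$\Rightarrow$(i). Only the first implication carries real content; the middle one is trivial and the last is a one-line test-function computation. The whole argument is a specialization of the machinery already developed for Lemma~\ref{l:sparse_implies_inequality}, with the sparseness bound replaced by the defining inequality of the Cheeger constant.

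For (i)$\Rightarrow$(ii), I would fix $f\in\Cc_c(\Vr)$, set $\Om_t:=\{x\in\Vr\mid |f(x)|^2>t\}$, and invoke the area and co-area formulas (cf.\ \cite[Theorem~12 and Theorem~13]{KL2}) to write $\langle f,(\deg+q)f\rangle=\int_0^\infty(\deg(\Om_t)+q(\Om_t))\,dt$ and, on the other side, $\tfrac12\sum_{x\sim y}\big||f(x)|^2-|f(y)|^2\big|+\sum_x q(x)|f(x)|^2=\int_0^\infty(|\partial\Om_t|+q(\Om_t))\,dt$. From (i) every finite $\Wr$ obeys $|\partial\Wr|+q(\Wr)\ge\al_{\Vr}(\deg(\Wr)+q(\Wr))$; applying this to each level set $\Om_t$ and integrating yields $\al_{\Vr}\langle f,(\deg+q)f\rangle\le \tfrac12\sum_{x\sim y}\big||f(x)|^2-|f(y)|^2\big|+\sum_x q(x)|f(x)|^2$.

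The next step reuses the factorization of Lemma~\ref{l:sparse_implies_inequality} verbatim: estimate $||f(x)|^2-|f(y)|^2|\le|(f(x)-f(y))(\overline{f(x)}+\overline{f(y)})|$ and apply Cauchy--Schwarz, splitting the potential term evenly between the two factors, to get $\al_{\Vr}\langle f,(\deg+q)f\rangle\le \langle f,(\Delta+q)f\rangle^{1/2}\big(2\langle f,(\deg+q)f\rangle-\langle f,(\Delta+q)f\rangle\big)^{1/2}$. Writing $X:=\langle f,(\Delta+q)f\rangle$ and $Y:=\langle f,(\deg+q)f\rangle$ (and noting $0\le X\le 2Y$ since $\Delta\le2\deg$), squaring gives $X^2-2XY+\al_{\Vr}^2Y^2\le0$, whose resolution in $X$ produces $\big(1-\sqrt{1-\al_{\Vr}^2}\big)Y\le X\le\big(1+\sqrt{1-\al_{\Vr}^2}\big)Y$. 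Since $\al_{\Vr}\in(0,1)$, the choice $\tilde a:=\sqrt{1-\al_{\Vr}^2}\in(0,1)$ is exactly statement (ii); the degenerate case $Y=0$ forces $X=0$ and is covered trivially.

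Finally, (ii)$\Rightarrow$(iii) is immediate (discard the upper bound), and for (iii)$\Rightarrow$(i) I would test the inequality against $f=\bone_\Wr$ for finite $\Wr\subseteq\Vr$, using $\langle\bone_\Wr,\Delta\bone_\Wr\rangle=|\partial\Wr|$, $\langle\bone_\Wr,\deg\bone_\Wr\rangle=\deg(\Wr)$ and $\langle\bone_\Wr,q\bone_\Wr\rangle=q(\Wr)$; this gives $(1-\tilde a)(\deg(\Wr)+q(\Wr))\le|\partial\Wr|+q(\Wr)$ for every $\Wr$ with $\deg(\Wr)+q(\Wr)>0$, whence $\al_{\Vr}\ge1-\tilde a>0$ upon taking the infimum. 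The main obstacle is the first implication, and within it the two delicate points are the even distribution of the potential across the two Cauchy--Schwarz factors (so the second factor collapses to $2Y-X$) and the correct solution of the quadratic, which pins down $\tilde a=\sqrt{1-\al_{\Vr}^2}$ and, crucially, forces $\tilde a<1$ precisely because $\al_{\Vr}>0$.
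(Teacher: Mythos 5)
Your proof is correct and follows essentially the same route as the paper: the implication (i)$\Rightarrow$(ii) is exactly the content of Proposition~\ref{p:cheeger} (which the paper extracts from \cite[Proposition~15]{KL2} via the same area/co-area and Cauchy--Schwarz argument, yielding the same constant $\tilde a=\sqrt{1-\al_{\Vr}^2}$), and the test-function argument with $f=\bone_{\Wr}$ for (iii)$\Rightarrow$(i) is precisely the computation the paper delegates to \cite[Proposition~3.4]{Gol2} and to Lemma~\ref{l:inequality_implies_sparse}. The only difference is that you write out in full the details the paper handles by citation.
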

The implication (iii)$\Rightarrow$(i) is already found in
\cite[Proposition 3.4]{Gol2}. The implication (i)$\Rightarrow$(ii)
is a consequence from standard isoperimetric estimates which can be
extracted from the proof of \cite[Proposition~15]{KL2}.

\begin{pro}[{\cite{KL2}}]\label{p:cheeger}
Let $\Gr:=(\Er, \Vr)$ be a graph and $q:\Vr\to[0,\infty)$. Then, for all  $\Ur\subseteq \Vr$ we have on $ \Cc_{c}(\Ur)$.
\begin{align*}
\big(1-\sqrt{1-\al_{\Ur}^2}\big)(\deg+q) \leq\Delta+q\leq\big(1+\sqrt{1-\al_{\Ur}^2}\big) (\deg+q).
\end{align*}
\end{pro}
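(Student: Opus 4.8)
The plan is to mirror the argument in the proof of Lemma~\ref{l:sparse_implies_inequality}, with the defining inequality of the Cheeger constant $\al_{\Ur}$ playing the role previously played by sparseness. Fix $f\in\Cc_c(\Ur)$ and abbreviate $L:=\langle f,(\Delta+q)f\rangle$ and $D:=\langle f,(\deg+q)f\rangle$. First I would apply the area and co-area formulas of \cite[Theorem~12 and Theorem~13]{KL2} to the superlevel sets $\Om_{t}:=\{x\in\Vr\mid |f(x)|^{2}>t\}$, obtaining
\[ D=\int_{0}^{\infty}\big(\deg(\Om_{t})+q(\Om_{t})\big)\,dt \]
together with
\[ \frac{1}{2}\sum_{x\sim y}\big||f(x)|^{2}-|f(y)|^{2}\big|+\sum_{x}q(x)|f(x)|^{2}=\int_{0}^{\infty}\big(|\partial\Om_{t}|+q(\Om_{t})\big)\,dt. \]
Because $f$ is finitely supported in $\Ur$, each $\Om_{t}$ is a finite subset of $\Ur$, so the definition of $\al_{\Ur}$ gives $|\partial\Om_{t}|+q(\Om_{t})\ge\al_{\Ur}\big(\deg(\Om_{t})+q(\Om_{t})\big)$ for every $t$; integrating this pointwise bound yields the functional isoperimetric inequality
\[ \al_{\Ur}\,D\le\frac{1}{2}\sum_{x\sim y}\big||f(x)|^{2}-|f(y)|^{2}\big|+\sum_{x}q(x)|f(x)|^{2}. \]

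Next I would bound the right-hand side from above exactly as in Lemma~\ref{l:sparse_implies_inequality}: using $\big||f(x)|^{2}-|f(y)|^{2}\big|\le|(f(x)-f(y))(\overline{f(x)}+\overline{f(y)})|$ and the Cauchy--Schwarz inequality, and identifying the two resulting factors with $\langle f,(\Delta+q)f\rangle=L$ and $\langle f,(2\deg-\Delta+q)f\rangle=2D-L$ (the latter via $\frac{1}{2}\sum_{x\sim y}|f(x)+f(y)|^{2}=\langle f,(2\deg-\Delta)f\rangle$), I obtain
\[ \frac{1}{2}\sum_{x\sim y}\big||f(x)|^{2}-|f(y)|^{2}\big|+\sum_{x}q(x)|f(x)|^{2}\le L^{1/2}(2D-L)^{1/2}. \]
Combining the two displays gives $\al_{\Ur}D\le L^{1/2}(2D-L)^{1/2}$. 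Since $\Delta\le2\deg$ forces $2D-L\ge0$, both sides are non-negative and squaring is legitimate, producing the quadratic inequality $L^{2}-2DL+\al_{\Ur}^{2}D^{2}\le0$.

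Finally I would read off the conclusion by solving this quadratic in $L$. Its discriminant is $4D^{2}(1-\al_{\Ur}^{2})\ge0$ (here $\al_{\Ur}\in[0,1)$ makes the square root real), and its roots are $L=D(1\pm\sqrt{1-\al_{\Ur}^{2}})$, so the inequality localizes $L$ between them:
\[ \big(1-\sqrt{1-\al_{\Ur}^{2}}\big)D\le L\le\big(1+\sqrt{1-\al_{\Ur}^{2}}\big)D, \]
which is precisely the asserted estimate on $\Cc_c(\Ur)$. The steps are all routine once the isoperimetric inequality is in hand; the only points demanding care are the bookkeeping ones, namely checking that every superlevel set $\Om_{t}$ really lies in $\Ur$ so that $\al_{\Ur}$ applies termwise, and verifying the sign conditions $2D-L\ge0$ and non-negativity of the isoperimetric sum before squaring. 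I expect the main (though minor) obstacle to be the clean justification of the co-area formula in the presence of the potential $q$, but this is exactly what \cite[Theorem~12 and Theorem~13]{KL2} provide and was already used in Lemma~\ref{l:sparse_implies_inequality}, so no genuinely new difficulty arises.
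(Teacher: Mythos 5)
Your proof is correct, and it is essentially the argument the paper intends: the paper does not prove Proposition~\ref{p:cheeger} in-text but defers to the proof of \cite[Proposition~15]{KL2}, which is exactly the co-area/area formula plus Cauchy--Schwarz plus quadratic-inequality scheme you describe, and which the paper itself reuses in the proof of Lemma~\ref{l:sparse_implies_inequality} with sparseness in place of the bound $|\partial\Om_{t}|+q(\Om_{t})\ge\al_{\Ur}(\deg(\Om_{t})+q(\Om_{t}))$. All the delicate points you flag (that $\Om_{t}\subseteq\Ur$ is finite, that $2D-L\ge0$, and the degenerate convention when $(\deg+q)(\Om_t)=0$) are handled correctly.
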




\subsection{Isoperimetric estimates at infinity}

Let the \emph{Cheeger constant at infinity} be defined as
\begin{align*}
\al_{\infty}=\sup_{\Kr\subseteq \Vr\,{\scriptsize{\mathrm{finite}}}}\al_{\Vr\setminus \Kr}.
\end{align*}
Clearly, $0\leq\al_{\Vr}\leq \al_\Ur\leq\al_{\infty}\leq 1$ for any
$\Ur\subseteq \Vr$.

As a consequence of Proposition \ref{p:cheeger}, we get the following theorem.

\begin{thm}\label{t:cheeger0}
Let $\Gr:=(\Er, \Vr)$ be a graph and $q:\Vr\to[0,\infty)$ be a
potential. Assume $\al_{\infty}>0$. Then, we have the following:
\begin{itemize}
  \item [(a)] For every
$\eps>0$ there is $k_{\eps}\geq 0$ such that on $ \Cc_{c}(\Vr)$
\begin{align*}
(1-\eps)\big(1-\sqrt{1-\al_{\infty}^2}\big)(\deg+q) -k_{\eps}&\leq\Delta+q\\
&\leq(1+\eps)\big(1+\sqrt{1-\al_{\infty}^2}\big) (\deg+q)+k_{\eps}.
\end{align*}
  \item [(b)] $\Dc((\Delta+q)^{1/2})=
\Dc((\deg+q)^{1/2})$.
  \item [(c)]  The operator $\Delta+q$ has
purely discrete spectrum if and only if we have $\liminf_{|x|\to\infty}(\mathrm{deg}+q)(x)=\infty$. In this case, if additionally $\al_{\infty}=1$, we get
\begin{align*}
    \liminf_{\lm\to\infty} \frac{\lm_{n}(\Delta+q)} {{\lm}_{n}(\mathrm{deg}+q)}=1.
\end{align*}
\end{itemize}
\end{thm}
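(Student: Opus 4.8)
The plan is to derive Theorem~\ref{t:cheeger0} as a direct consequence of Proposition~\ref{p:cheeger} applied to the complements of finite sets, mimicking the passage from the uniform bound in Theorem~\ref{p:cheeger-form} to the ``at infinity'' version, exactly as Theorem~\ref{t:q-sparse-deg} is deduced from Theorem~\ref{t:form}. First I would fix $\eps>0$ and observe that, by the definition of $\al_\infty$ as a supremum over finite $\Kr\subseteq\Vr$, there is a finite set $\Kr_{\eps}$ such that $\al_{\Vr\setminus\Kr_{\eps}}$ is close to $\al_\infty$; since the function $t\mapsto 1-\sqrt{1-t^2}$ is continuous and increasing on $[0,1]$, one can arrange
\begin{align*}
1-\sqrt{1-\al_{\Vr\setminus\Kr_\eps}^2}\ge (1-\eps)\big(1-\sqrt{1-\al_\infty^2}\big),\quad
1+\sqrt{1-\al_{\Vr\setminus\Kr_\eps}^2}\le (1+\eps)\big(1+\sqrt{1-\al_\infty^2}\big).
\end{align*}
Applying Proposition~\ref{p:cheeger} with $\Ur=\Vr\setminus\Kr_\eps$ then yields the two-sided bound of part (a) on $\Cc_c(\Vr\setminus\Kr_\eps)$ with $k_\eps=0$ there.

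The main obstacle, and the only genuinely technical point, will be to extend these inequalities from $\Cc_c(\Vr\setminus\Kr_\eps)$ to all of $\Cc_c(\Vr)$ at the cost of the additive constant $k_\eps$. The plan is to split a general $f\in\Cc_c(\Vr)$ via the finite set $\Kr_\eps$: write $f=f\bone_{\Kr_\eps}+f\bone_{\Vr\setminus\Kr_\eps}$ and control the cross terms and the part supported on the finite set $\Kr_\eps$ by a bounded perturbation. Concretely, since $\Kr_\eps$ is finite and the graph is locally finite, the operator $\Delta$ restricted to functions meeting $\Kr_\eps$ differs from its ``decoupled'' counterpart only through finitely many edges touching $\Kr_\eps$; the associated rank-type correction is bounded, so there is a constant $k_\eps\ge0$ absorbing both the discrepancy between $\langle f,(\Delta+q)f\rangle$ and its value after removing $\Kr_\eps$ and the contribution of $\deg+q$ on $\Kr_\eps$. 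This is precisely the standard device by which a form inequality valid off a finite set upgrades to one valid everywhere modulo an additive constant, and it gives part (a).

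With part (a) established, parts (b) and (c) follow formally and require no new ideas. For (b), the two-sided bound in (a) states that the forms of $\Delta+q$ and $\deg+q$ are comparable up to the additive constant $k_\eps$; an application of the Closed Graph Theorem, Theorem~\ref{t:imt}, yields the equality $\Dc((\Delta+q)^{1/2})=\Dc((\deg+q)^{1/2})$ of form domains, just as in the proof of Theorem~\ref{t:form}. For the discreteness criterion in (c), the equality of form domains together with the Min-Max Principle, Theorem~\ref{t:minmax}, shows that $\Delta+q$ has purely discrete spectrum if and only if $\deg+q$ does, and the latter is governed by $\liminf_{|x|\to\infty}(\deg+q)(x)=\infty$ since $\deg+q$ is a multiplication operator. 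Finally, when $\al_\infty=1$ the factors $1\mp\sqrt{1-\al_\infty^2}$ both equal $1$, so letting $\eps\to0$ in the Min-Max comparison squeezes the ratio $\lm_n(\Delta+q)/\lm_n(\deg+q)$ to $1$, exactly as in the almost-sparse case of Theorem~\ref{t:q-sparse-deg}; I expect this limiting argument to be routine once part (a) is in hand.
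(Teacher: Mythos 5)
Your proposal is correct and follows essentially the same route as the paper: choose a finite $\Kr_\eps$ so that $\al_{\Vr\setminus\Kr_\eps}$ approximates $\al_\infty$, apply Proposition~\ref{p:cheeger} on $\Cc_c(\Vr\setminus\Kr_\eps)$, absorb the finitely many terms involving $\Kr_\eps$ (a bounded, indeed finite-rank, perturbation by local finiteness) into $k_\eps$, and then deduce (b) and (c) from Theorem~\ref{t:imt} and the Min-Max Principle, Theorem~\ref{t:minmax}. No substantive differences from the paper's argument.
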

\begin{proof}(a)
Let  $\eps>0$ and $\Kr\subseteq \Vr$ be finite and
large enough such that
\begin{align*}
(1-\eps)\big(1-\sqrt{1-\al_{\infty}^2}\big) &\leq \big(1- \sqrt{1-
  \al_{\Vr\setminus \Kr}^2}\big)
\\
 \big(1+ \sqrt{1-
  \al_{\Vr\setminus \Kr}^2}\big)&\leq (1+\varepsilon
)\big(1+\sqrt{1-\al_{\infty}^2}\big).
\end{align*}
From Proposition~\ref{p:cheeger} we conclude on $\cC_{c}(\Vr\setminus \Kr)$
\begin{align*}
(1-\eps)\big(1-\sqrt{1-\al_{\infty}^2}\big)(\deg+q) &\leq \big(1- \sqrt{1-\al_{\Vr\setminus \Kr}^2}\big)(\deg+q)\\
&\leq \Delta+q
  \leq \big(1+ \sqrt{1-  \al_{\Vr\setminus \Kr}^2}\big)(\deg+q)\\
  &\leq (1+\varepsilon
)\big(1+\sqrt{1-\al_{\infty}^2}\big)(\deg+q)
\end{align*}
By local finiteness the operators $\bone_{\Vr\setminus \Kr}(\Delta+q)\bone_{\Vr\setminus \Kr}$ and $\bone_{\Vr\setminus \Kr}(\deg+q)\bone_{\Vr\setminus \Kr}$ are bounded (indeed, finite rank) perturbations of $\Delta+q$ and $\deg+q$.  This gives rise to the constants $k_{\eps}$ and the inequality of (a) follows.
Now, (b) is an immediate consequence of (a), and (c)   follows by the Min-Max-Principle, Theorem~\ref{t:minmax}.
\end{proof}

\subsection{Relating sparseness and isoperimetric estimates.}
We now explain how the notions of sparseness and isoperimetric   estimates are exactly related.

First, we consider classical isoperimetric estimates.

\begin{thm}\label{t:sparse-isop-q0} Let $\Gr:=(\Vr,\Er)$ be a graph,     $a,k\geq 0$,   and let $q:\Vr\to [0,\infty)$ be a potential.
  \begin{itemize}
      \item [(a)]  $\al_{\Vr}\ge \displaystyle\frac{1}{1+a}$ if and only
       if  $(\Gr,q)$ is $(a,0)$-sparse.
    \item [(b)] If $(\Gr,q)$ is $(a,k)$-sparse, then
        \begin{align*}
            \al_{\Vr}\ge \frac{d-k}{d(1+a)},
        \end{align*}
        where $d:=\inf_{x\in\Vr}(\deg+q)(x)$. In particular,
        $\al_{\Vr}>0$ if $d>k$.

    \item [(c)] Suppose that $(\Gr,q)$ is $(a,k)$-sparse graph that is
      not $(a,k')$-sparse for all $k'<k$. Suppose also that there is
      $d$ such that $d=\deg(x)+q(x)$ for all $x\in\Vr$. Then
\[\al_{\Vr}=\displaystyle
      \frac{d-k}{d(1+a)}.\]
  \end{itemize}
  \end{thm}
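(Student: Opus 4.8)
The plan is to prove the three statements of Theorem~\ref{t:sparse-isop-q0} by translating between the two defining quotients: the isoperimetric ratio $(|\partial\Wr|+q(\Wr))/(\deg(\Wr)+q(\Wr))$ and the sparseness inequality $2|\Er_\Wr|\le k|\Wr|+a(|\partial\Wr|+q_+(\Wr))$. The algebraic bridge is the identity $\deg(\Wr)=2|\Er_\Wr|+|\partial\Wr|$ recalled in the proof of Lemma~\ref{l:inequality_implies_sparse}, which lets one replace $2|\Er_\Wr|$ by $\deg(\Wr)-|\partial\Wr|$ everywhere. Since $q\ge0$ here we have $q=q_+$, so the sparseness condition reads $\deg(\Wr)-|\partial\Wr|\le k|\Wr|+a(|\partial\Wr|+q(\Wr))$, i.e.
\begin{align*}
\deg(\Wr)+q(\Wr)\le k|\Wr|+(1+a)\big(|\partial\Wr|+q(\Wr)\big).
\end{align*}
This single rearranged inequality is the common engine for all three parts.

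For part (a) I would argue both directions from the displayed inequality with $k=0$. The $(a,0)$-sparse condition becomes $\deg(\Wr)+q(\Wr)\le(1+a)(|\partial\Wr|+q(\Wr))$ for every finite $\Wr$, which is exactly $(|\partial\Wr|+q(\Wr))/(\deg(\Wr)+q(\Wr))\ge 1/(1+a)$ for every finite $\Wr$ (handling the degenerate case $\deg(\Wr)+q(\Wr)=0$ by the stated convention). Taking the infimum over $\Wr$ gives $\al_\Vr\ge 1/(1+a)$, and conversely the bound on the infimum forces the inequality for each $\Wr$; so the two statements are equivalent.

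For part (b) I would not drop the $k|\Wr|$ term but instead bound it. From the rearranged inequality,
\begin{align*}
(1+a)\big(|\partial\Wr|+q(\Wr)\big)\ge \deg(\Wr)+q(\Wr)-k|\Wr|.
\end{align*}
Using $d=\inf_x(\deg+q)(x)$ we have $\deg(\Wr)+q(\Wr)\ge d|\Wr|$, hence $|\Wr|\le(\deg(\Wr)+q(\Wr))/d$, so $k|\Wr|\le (k/d)(\deg(\Wr)+q(\Wr))$. Substituting gives $(1+a)(|\partial\Wr|+q(\Wr))\ge(1-k/d)(\deg(\Wr)+q(\Wr))$, which upon dividing yields the ratio bound $(d-k)/(d(1+a))$ for each $\Wr$, and taking the infimum gives $\al_\Vr\ge(d-k)/(d(1+a))$. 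Positivity when $d>k$ is then immediate.

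For part (c), under the regularity hypothesis $\deg(x)+q(x)=d$ for all $x$ we have exactly $\deg(\Wr)+q(\Wr)=d|\Wr|$, so the inequality of (b) becomes an equality in the passage $k|\Wr|=(k/d)(\deg(\Wr)+q(\Wr))$; thus the lower bound $\al_\Vr\ge(d-k)/(d(1+a))$ still holds. The remaining work is the matching upper bound, and this is the step I expect to be the main obstacle. The hypothesis that $(\Gr,q)$ is \emph{not} $(a,k')$-sparse for any $k'<k$ means that for every $k'<k$ there is a finite witness $\Wr$ with $2|\Er_\Wr|>k'|\Wr|+a(|\partial\Wr|+q(\Wr))$; equivalently the rearranged inequality is violated, giving $\deg(\Wr)+q(\Wr)>k'|\Wr|+(1+a)(|\partial\Wr|+q(\Wr))$. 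I would feed this near-optimal witness into the ratio and use $\deg(\Wr)+q(\Wr)=d|\Wr|$ to get $(|\partial\Wr|+q(\Wr))/(\deg(\Wr)+q(\Wr))<(d-k')/(d(1+a))$, so $\al_\Vr\le(d-k')/(d(1+a))$ for every $k'<k$; letting $k'\to k^-$ yields $\al_\Vr\le(d-k)/(d(1+a))$, and combined with the lower bound this forces equality. The delicate point is confirming that the sharpness hypothesis really produces, for each $k'<k$, a genuine finite witness of sparseness-failure that can be used at fixed $a$, so that the limit $k'\to k^-$ closes the gap cleanly.
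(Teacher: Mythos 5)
Your proposal is correct and follows essentially the same route as the paper: rearrange the sparseness inequality via $\deg(\Wr)=2|\Er_\Wr|+|\partial\Wr|$ for (a), bound $k|\Wr|\le (k/d)(\deg+q)(\Wr)$ for (b), and use a witness of the failure of $(a,k')$-sparseness plus $k'\to k^-$ for (c). The ``delicate point'' you flag is not an issue: failure of $(a,k')$-sparseness means, by definition, the existence of a finite set violating the inequality at the fixed $a$, which is exactly the witness you need.
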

\begin{proof}Let $\Wr\subset \Vr$ be a finite set. Recalling the
    identity      $\deg(\Wr)=2|\Er_{\Wr}|+|\partial\Wr|$
we notice that
\begin{align*}
\frac{1}{1+a}\leq \frac{|\partial\Wr|+q(\Wr)}{(\deg+q)(\Wr)}
\end{align*}
is equivalent to
\begin{align*}
 2 |\Er_{\Wr}| \leq a( |\partial\Wr|+q(\Wr))
\end{align*}
which proves (a).

For (b), the definition of $(a,k)$-sparseness yields
  \begin{align*}
    \frac{|\partial\Wr|+q(\Wr)}{(\deg+q)(\Wr)}=
    1-\frac{2|\Er_{\Wr}|}{(\deg+q)(\Wr)}\geq 1- a
    \frac{|\partial\Wr|+q(\Wr)}{(\deg+q)(\Wr)} - k
    \frac{|\Wr|}{(\deg+q)(\Wr)}.
  \end{align*}
This concludes immediately.

For (c), the lower bound of $\alpha_\Vr$ follows from (b). Since
  $(\Gr,q)$ is not $(a,k')$-sparse there is a finite $\Wr_0\subset
  \Vr$ such that
  \begin{align*}
    \frac{|\partial\Wr_0|+q(\Wr_0)}{(\deg+q)(\Wr_0)}
    < 1- a
    \frac{|\partial\Wr_0|+q(\Wr_0)}{(\deg+q)(\Wr_0)} - k'
    \frac{|\Wr_0|}{(\deg+q)(\Wr_0)}.
  \end{align*}
Therefore $\alpha_\Vr <(d-k')/(d(1+a))$. 
  \end{proof}

We next address the relation between almost sparseness and
    isoperimetry and show two ``almost equivalences''.

\begin{thm}\label{t:sparse-isop-q}
 Let $\Gr:=(\Vr,\Er)$ be a graph and let $q:\Vr\to [0,\infty)$ be a potential.
 \begin{itemize}
   \item [(a)] If $\alpha_\infty>0$, then $(\Gr,q)$ is $(a,k)$-sparse for some
$a>0$, $k\geq 0$.  On the other hand, if $(\Gr,q)$ is        $(a,k)$-sparse for some
$a>0$, $k\geq 0$ and
\[l:=\liminf_{|x|\to \infty} (\deg+q)(x) > k,\]
then
\begin{align*}\label{e:sparse-isop-q0}
\alpha_\infty \geq \frac{l-k}{l(a+1)}>0,
\end{align*}
if $l$ is finite and $\alpha_\infty\geq 1/(1+a)$ otherwise.
   \item [(b)]  If $\alpha_\infty=1$, then $(\Gr,q)$ is almost sparse. On the other hand, if $(\Gr,q)$ is almost sparse  and   $\liminf_{|x|\to\infty}(\deg+q)(x)=\infty$, then $\al_{\infty}=1$.
 \end{itemize}
 \end{thm}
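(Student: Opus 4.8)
The plan is to prove the two ``almost equivalences'' in Theorem~\ref{t:sparse-isop-q} by reducing everything to the finite-set inequalities already established, rather than invoking the operator inequalities. The statement concerns relations between $\al_\infty$ and $(a,k)$-sparseness (part (a)) and between $\al_\infty=1$ and almost sparseness (part (b)), all of which are defined combinatorially in terms of finite subsets $\Wr\subseteq\Vr$. So the whole argument should be a careful bookkeeping of Cheeger-type quotients over finite sets avoiding a fixed finite ``core'' $\Kr$.

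For part (a), first I would prove the forward direction. Assume $\al_\infty>0$, so there is a finite $\Kr\subseteq\Vr$ with $\al_{\Vr\setminus\Kr}>0$; by Theorem~\ref{t:sparse-isop-q0}(a) applied to the subgraph on $\Vr\setminus\Kr$, the pair $(\Gr_{\Vr\setminus\Kr},q)$ is $(a,0)$-sparse with $a=(1-\al_{\Vr\setminus\Kr})/\al_{\Vr\setminus\Kr}$. The issue is to upgrade $(a,0)$-sparseness off the core $\Kr$ to $(a,k)$-sparseness on all of $\Vr$. Given an arbitrary finite $\Wr\subseteq\Vr$, I would split the edge count of $\Gr_\Wr$ into edges lying in $\Wr\setminus\Kr$, handled by the $(a,0)$-bound, and edges meeting $\Kr$, whose number is controlled by $\deg$ restricted to the \emph{fixed finite} set $\Kr$, hence by a constant times $|\Wr\cap\Kr|\le|\Wr|$. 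This produces a term $k|\Wr|$ and yields $(a,k)$-sparseness. For the reverse direction, assume $(a,k)$-sparse with $l:=\liminf_{|x|\to\infty}(\deg+q)(x)>k$. For any $\ve>0$ I would choose a finite $\Kr$ so that $(\deg+q)(x)\ge l-\ve$ (or is arbitrarily large, when $l=\infty$) for $x\notin\Kr$, and then run the computation of Theorem~\ref{t:sparse-isop-q0}(b) over finite sets $\Wr\subseteq\Vr\setminus\Kr$, replacing the infimum $d$ by $l-\ve$; this gives $\al_{\Vr\setminus\Kr}\ge(l-\ve-k)/((l-\ve)(1+a))$, and letting $\ve\to0$ and taking the supremum over $\Kr$ yields the claimed lower bound on $\al_\infty$.

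For part (b), the forward direction is the assertion that $\al_\infty=1$ forces almost sparseness, i.e.\ $(\ve,k_\ve)$-sparseness for every $\ve>0$. Fixing $\ve>0$, I would set $a=\ve$ and observe $\al_\infty=1$ means $\sup_\Kr\al_{\Vr\setminus\Kr}=1$, so there is a finite $\Kr$ with $\al_{\Vr\setminus\Kr}\ge 1/(1+\ve)$; by Theorem~\ref{t:sparse-isop-q0}(a) this is exactly $(\ve,0)$-sparseness of the graph on $\Vr\setminus\Kr$, which I then convert to $(\ve,k_\ve)$-sparseness on all of $\Vr$ by the same core-splitting argument as in part (a), the constant $k_\ve$ again coming from the degrees on the fixed finite set $\Kr$. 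Since $\ve$ was arbitrary, $(\Gr,q)$ is almost sparse. For the reverse direction, assume almost sparse with $\liminf_{|x|\to\infty}(\deg+q)(x)=\infty$. Then for each $\ve>0$ the pair is $(\ve,k_\ve)$-sparse, and since $l=\infty>k_\ve$, part (a) already gives $\al_\infty\ge 1/(1+\ve)$; letting $\ve\to0$ forces $\al_\infty\ge1$, hence $\al_\infty=1$ as $\al_\infty\le1$ always.

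The main obstacle, and the only genuinely nontrivial step, is the core-splitting conversion from $(a,0)$- or $(\ve,0)$-sparseness \emph{away from} a finite set $\Kr$ to $(a,k)$-sparseness \emph{on the whole graph}. One must verify that the edges of an arbitrary finite induced subgraph $\Gr_\Wr$ that are not already counted by the boundary-type estimate on $\Vr\setminus\Kr$ contribute at most a constant multiple of $|\Wr|$; the key point is that these extra edges all have an endpoint in the fixed finite set $\Kr$, so their total number is bounded by $\tfrac12\deg(\Kr)$, a constant depending only on $\Kr$, which can be absorbed into $k_\ve|\Wr|$ since $\Wr\ne\emptyset$. I expect the direction $\al_\infty=1\Rightarrow$ almost sparse in part (b) to be where the bookkeeping must be done most carefully, because one needs a \emph{different} core $\Kr$ and constant $k_\ve$ for each $\ve$, and the passage $a=\ve$ must be tracked through the explicit Cheeger-to-sparse dictionary of Theorem~\ref{t:sparse-isop-q0}(a).
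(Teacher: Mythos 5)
Your proof is correct, but the forward implications in (a) and (b) follow a genuinely different route from the paper's. The paper deduces them from machinery already in place: $\al_\infty>0$ (resp.\ $\al_\infty=1$) yields the form inequality of Theorem~\ref{t:cheeger0}~(a), and then Theorem~\ref{t:form} (ii)$\Rightarrow$(i) (resp.\ Theorem~\ref{t:q-sparse-deg} (ii)$\Rightarrow$(i)), i.e.\ Lemma~\ref{l:inequality_implies_sparse} tested on indicator functions, converts the form inequality back into sparseness. You instead stay entirely combinatorial: the identity $\deg(\Wr)=2|\Er_{\Wr}|+|\partial\Wr|$ turns $\al_{\Vr\setminus\Kr}\ge 1/(1+a)$ into $2|\Er_{\Wr}|\le a\left(|\partial\Wr|+q(\Wr)\right)$ for finite $\Wr\subseteq\Vr\setminus\Kr$, and your core-splitting step upgrades this to global $(a,k)$-sparseness, since the edges of $\Gr_\Wr$ meeting the fixed finite set $\Kr$ and the extra boundary edges of $\Wr\setminus\Kr$ ending in $\Wr\cap\Kr$ number at most $\deg(\Kr)$ each and can be absorbed into $k|\Wr|$ because $|\Wr|\ge 1$ (local finiteness guarantees $\deg(\Kr)<\infty$). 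This buys a self-contained, elementary argument that never passes through $\Delta$, at the price of redoing by hand the finite-perturbation step that the paper hides in the finite-rank argument inside Theorem~\ref{t:cheeger0}. The reverse implications coincide with the paper's. One small point of care: state the first step as the equivalence of $\al_{\Vr\setminus\Kr}\ge 1/(1+a)$ with the displayed inequality for finite $\Wr\subseteq\Vr\setminus\Kr$, where $\partial\Wr$ and $\deg$ are still computed in the ambient graph $\Gr$ (the same algebra as in the proof of Theorem~\ref{t:sparse-isop-q0}~(a)), rather than literally applying that theorem to the induced subgraph $\Gr_{\Vr\setminus\Kr}$: the intrinsic Cheeger constant of that subgraph can be strictly smaller than $\al_{\Vr\setminus\Kr}$ (even zero, if the subgraph has isolated vertices), so the literal application would not be available.
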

\begin{proof}
The first implication of (a) follows from Theorem~\ref{t:cheeger0}~(a) and Theorem~\ref{t:form} (ii)$\Rightarrow$(i). For the opposite direction let $\eps>0$ and $\Kr\subseteq \Vr$ be finite such that $\deg+q\ge l-\eps$ on $\Vr\setminus \Kr$. Using the formula in the proof of  Theorem~\ref{t:sparse-isop-q0} above, yields for $\Wr\subseteq \Vr\setminus \Kr$
 \begin{align*}
    \frac{|\partial\Wr|+q(\Wr)}{(\deg+q)(\Wr)}&= 1-\frac{2|\Er_{\Wr}|}{(\deg+q)(\Wr)}\ge
    1-\frac{k|\Wr|+a(|\partial \Wr|+q(\Wr))}{(\deg+q)(\Wr)}\\
&\ge   1-\frac{k}{(l-\eps)} -\frac{a(|\partial\Wr|+q(\Wr))}{(\deg+q)(\Wr)}.
  \end{align*}
  This proves (a).\\
  The first implication of (b) follows from
  Theorem~\ref{t:cheeger0}~(a) and Theorem~\ref{t:q-sparse-deg}
  (ii)$\Rightarrow$(i). The other implication follows from (a) using
  the definition of almost sparseness.
\end{proof}

\begin{rem}
(a) We point out that without the assumptions on $(\deg+q)$ the converse implications  do not hold. For example the Cayley graph of $\Z$ is $2$-sparse (cf. Lemma~\ref{lem:planar=sparse}), but has $\alpha_\infty=0$.\\
(b) Observe that $\al_{\infty}=1$  implies        $\liminf_{|x|\to\infty}(\deg+q)(x)=\infty$. Hence,  (b)  can be rephrased  as the following equivalence: $\al_{\infty}=1$ is equivalent to  $(\Gr,q)$ almost sparse and $\liminf_{|x|\to\infty}(\deg+q)(x)=\infty$.\\
\end{rem}

{The previous theorems provides a slightly simplified proof of \cite{K1} which also appeared morally in somewhat different forms in \cite{D1,Woe}.
\begin{cor}Let $\Gr:=(\Vr,\Er)$ be a planar graph.
\begin{itemize}
  \item [(a)]  If for all vertices $\deg\geq   7$, then $\alpha_\Vr >0$.
\item [(b)]  If for all vertices  away from a finite set $\deg\geq   7$, then $\alpha_\infty >0$.
\end{itemize}
 \end{cor}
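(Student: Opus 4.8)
The plan is to reduce both parts to the already established correspondences between sparseness and isoperimetric constants, namely Theorem~\ref{t:sparse-isop-q0} and Theorem~\ref{t:sparse-isop-q}, used throughout with $q\equiv 0$. The single new ingredient is the classical consequence of Euler's formula that a finite simple planar graph on $n$ vertices carries at most $3n-6$ edges once $n\ge 3$ (and trivially at most $3n$ edges when $n\le 2$). All the geometric content of the corollary will be packaged into one uniform sparseness statement, after which (a) and (b) become immediate applications of the two theorems.

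First I would establish that planarity alone forces a universal edge bound on \emph{every} finite induced subgraph. Since an induced subgraph $\Gr_\Wr$ of a planar graph is again planar, the Euler estimate gives $2|\Er_\Wr|\le 6|\Wr|$ for all finite $\Wr\subseteq\Vr$. Because $|\partial\Wr|\ge 0$, this is exactly the assertion that $(\Gr,0)$ is $(a,6)$-sparse for every $a\ge 0$; in particular it is $6$-sparse (the case $a=0$) and $(1,6)$-sparse. This is the only step that uses planarity, and it is where all the work sits.

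With this in hand, part (a) follows by feeding the degree hypothesis into Theorem~\ref{t:sparse-isop-q0}(b): applying it with $a=0$, $k=6$ and $d:=\inf_{x\in\Vr}\deg(x)$, the assumption $\deg\ge 7$ gives $d\ge 7>6=k$, so the theorem returns $\alpha_\Vr\ge (d-6)/d\ge 1/7>0$. For part (b) I would instead invoke Theorem~\ref{t:sparse-isop-q}(a) using the $(1,6)$-sparseness (the theorem needs $a>0$, which is why I keep $a=1$ here rather than $a=0$): with $l:=\liminf_{|x|\to\infty}\deg(x)$, the hypothesis that $\deg\ge 7$ off a finite set gives $l\ge 7>6$, whence $\alpha_\infty\ge (l-6)/(2l)>0$ when $l$ is finite and $\alpha_\infty\ge 1/2$ otherwise.

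No step poses a serious obstacle; the only points needing care are bookkeeping ones. One must apply the Euler bound to $\Gr_\Wr$ and use that planarity is inherited by induced subgraphs, and one must remember that in the definitions of $(a,k)$-sparseness and of $\alpha_\Vr,\alpha_\infty$ the quantities $\deg(\Wr)$ and $|\partial\Wr|$ are computed in the ambient graph $\Gr$, not inside $\Gr_\Wr$; the identity $\deg(\Wr)=2|\Er_\Wr|+|\partial\Wr|$ keeps these terms consistent. Finally, the degenerate cases $|\Wr|\le 2$, where $3n-6$ would be negative, should be checked separately, but there $|\Er_\Wr|\le 1$ makes the bound $2|\Er_\Wr|\le 6|\Wr|$ trivial.
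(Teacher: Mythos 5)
Your proposal is correct and follows exactly the paper's route: the paper proves this corollary by combining Lemma~\ref{lem:planar=sparse} (planar graphs are $6$-sparse via Euler's formula) with Theorem~\ref{t:sparse-isop-q0} for part (a) and Theorem~\ref{t:sparse-isop-q} for part (b). Your only addition is to spell out the constants and to note the small bookkeeping point that one passes from $6$-sparse to $(1,6)$-sparse in order to meet the hypothesis $a>0$ in Theorem~\ref{t:sparse-isop-q}(a), which is a correct and harmless refinement of the paper's one-line argument.
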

\proof Combine Theorem~\ref{t:sparse-isop-q0}  and Theorem~\ref{t:sparse-isop-q}  with Lemma
\ref{lem:planar=sparse}.\qed
}

\section{Examples}\label{s:sparse}

\subsection{Examples of  sparse graphs}

To start off, we exhibit two classes of sparse graphs. First we consider the case of graphs with bounded degree.
\begin{lem}\label{lem:bounded=sparse}
Let $\Gr:=(\Vr,\Er)$ be a graph. Assume $D:= \sup_{x\in \Vr} \deg(x)<+\infty$, then $\Gr$ is $D$-sparse.
\end{lem}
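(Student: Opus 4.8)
The plan is to prove the statement directly from the definition of $k$-sparseness by bounding the number of edges in any induced subgraph using the degree bound. This is the simplest possible case, so the approach will be a one-line counting argument rather than anything requiring the functional-analytic machinery developed earlier.

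\medskip

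First I would fix an arbitrary finite set $\Wr\subseteq\Vr$ and consider the induced subgraph $\Gr_{\Wr}=(\Wr,\Er_{\Wr})$. The key observation is the handshake-type identity: each edge of $\Gr_{\Wr}$ contributes to the degree (within $\Gr_{\Wr}$) of exactly two of its endpoints, so summing the degrees over $\Wr$ counts every edge twice. More precisely, writing $\deg_{\Wr}(x)$ for the degree of $x$ in the induced subgraph, one has
\begin{align*}
2|\Er_{\Wr}| = \sum_{x\in\Wr}\deg_{\Wr}(x).
\end{align*}
Since the induced subgraph can only have fewer neighbors than the full graph, $\deg_{\Wr}(x)\le\deg(x)\le D$ for every $x\in\Wr$.

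\medskip

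Combining these two facts immediately yields
\begin{align*}
2|\Er_{\Wr}| = \sum_{x\in\Wr}\deg_{\Wr}(x) \le \sum_{x\in\Wr} D = D|\Wr|,
\end{align*}
which is exactly the defining inequality for $D$-sparseness (that is, $(0,D)$-sparseness in the notation of the extended definition, taking $a=0$ and $k=D$). As $\Wr$ was an arbitrary finite set, this establishes the claim.

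\medskip

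I do not anticipate any genuine obstacle here; the only point requiring a moment's care is the distinction between the degree $\deg(x)$ computed in the ambient graph $\Gr$ and the degree $\deg_{\Wr}(x)$ computed in the induced subgraph $\Gr_{\Wr}$. The inequality $\deg_{\Wr}(x)\le\deg(x)$ holds because an edge survives into $\Gr_{\Wr}$ only when both its endpoints lie in $\Wr$, so restricting to $\Wr$ can only remove incident edges. Everything else is elementary counting, and no hypothesis beyond the uniform degree bound $D<\infty$ is needed.
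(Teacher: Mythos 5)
Your argument is correct and is essentially the paper's proof: the paper writes $2|\Er_{\Wr}|\le \deg(\Wr)\le D|\Wr|$, which is exactly your combination of the handshake identity for the induced subgraph with the bound $\deg_{\Wr}(x)\le\deg(x)\le D$. No gaps.
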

\begin{proof}
 Let $\Wr$ be a finite subset of $\Vr$. Then, $2|\Ec_\Wr|\leq \deg(\Wr) \leq D |\Wr|$.
\end{proof}

We turn to graphs which admit a $2$-cell embedding into $S_g$, where
$S_g$ denotes a compact orientable topological surface of genus
$g$. (The surface $S_{g}$ might be pictured as a sphere with $g$
handles.)  Admitting a $2$-cell embedding means that the graphs can be
embedded into $S_{g}$ without self-intersection.
By definition we say that a graph is planar when $g=0$. Note that unlike other possible definitions of planarity, we do not impose any local compactness on the embedding.

\begin{lem}\label{lem:planar=sparse}
\begin{itemize}
 \item[(a)] Trees are $2$-sparse.
 \item[(b)] Planar graphs are ${6}$-sparse.
\item[(c)] Graphs admitting a $2$-cell embedding into $S_g$ with $g\geq 1$ are
  ${4g+2}$-sparse.
\end{itemize}
\end{lem}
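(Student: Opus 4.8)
The plan is to bound the number of edges of any finite induced subgraph using Euler's formula for graphs embedded in $S_g$. First I would reduce to the connected case: if $\Gr_{\Wr}$ has several connected components, the inequality $2|\Er_{\Wr}|\le k|\Wr|$ is additive over components (both sides split as sums), so it suffices to prove the bound for each connected induced subgraph and then sum. I would also discard trivial cases, namely when $\Wr$ has very few vertices, where the bound holds by inspection.

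For the connected case, the key tool is the Euler--Poincar\'e formula. A finite connected graph $2$-cell embedded in $S_g$ satisfies $V - E + F = 2 - 2g$, where $V=|\Wr|$, $E=|\Er_{\Wr}|$, and $F$ is the number of faces. The crucial inequality relating faces and edges comes from counting edge-face incidences: since the graph is simple (no loops or multiple edges, as $\Er$ takes values in $\{0,1\}$), every face is bounded by at least $3$ edges, and each edge borders at most $2$ faces, giving $3F \le 2E$, hence $F \le \tfrac{2}{3}E$. Substituting into Euler's formula yields $V - E + \tfrac23 E \ge 2 - 2g$, that is $V - \tfrac13 E \ge 2-2g$, so $E \le 3V - 3(2-2g) = 3V + 6g - 6$. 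For the genus $g\ge 1$ case this gives $2E \le 6V + 12g - 12 = (6 + (12g-12))V/\!\cdots$; more carefully, to produce a clean bound of the form $2E \le k|\Wr|$ I would absorb the constant into the linear term using $V\ge 1$: since $12g-12 \ge 0$ for $g\ge1$, one has $2E \le 6V + (12g-12) \le (6 + 12g-12)V = (12g-6)V$, which is \emph{not} quite $(4g+2)V$, so the na\"ive incidence bound is too weak and must be sharpened.

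The main obstacle is therefore obtaining the sharp constant $4g+2$ rather than the crude $12g-6$. The sharpening exploits that the embedding is a \emph{$2$-cell} embedding, so every face is a disk; I would use the refined girth/face bound more carefully, and critically handle the fact that for small $\Wr$ the subgraph induced may not itself be $2$-cell embedded in $S_g$ but rather in some $S_{g'}$ with $g'\le g$. The correct approach is: an induced subgraph of a graph embeddable in $S_g$ is itself embeddable in $S_g$ (just restrict the embedding), and for \emph{any} simple graph embeddable in $S_g$ one has the standard bound $E \le 3V + 6(g-1)$ when $V\ge 3$. To reach $2|\Er_{\Wr}| \le (4g+2)|\Wr|$ I would combine this with the trivial bound $E \le \binom{V}{2}$ valid for small $V$, and verify that $6V + 12(g-1) \le (4g+2)V$ holds precisely when $V \ge \tfrac{12(g-1)}{4g-4} = 3$, so the two regimes ($V\ge 3$ via Euler, $V\le 2$ by inspection since then $E\le 1 \le (4g+2)V/2$) together cover all cases.

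For parts (a) and (b): a tree on $V$ vertices has $E = V-1 < V$, so $2E \le 2V$ (using $E\le V-1$), giving $2$-sparseness directly without Euler's formula. For planar graphs, setting $g=0$ in the bound $E \le 3V-6$ (valid for $V\ge 3$) gives $2E \le 6V - 12 \le 6V$, establishing $6$-sparseness, with the cases $V\le 2$ again handled trivially since then $E \le 1 \le 3V = 6V/2$. I expect the routine part to be these Euler-formula manipulations; the genuinely delicate point, which I would flag explicitly, is justifying that restricting to an induced subgraph preserves embeddability in $S_g$ and that the face-counting inequality $3F\le 2E$ is legitimate for the possibly-disconnected or low-vertex subgraphs, which is why the reduction to connected subgraphs with $V\ge 3$ is carried out first.
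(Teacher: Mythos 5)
Your proposal is correct and follows essentially the same route as the paper: Euler's formula $V-E+F=2-2g$ combined with the face--edge incidence count $3F\le 2E$ gives $E\le 3V+6(g-1)$, which is absorbed into $(2g+1)V$ exactly when $V\ge 3$, with the small-vertex and tree/planar cases handled by inspection as in the paper. Your explicit reduction to connected components and your care about whether the induced subgraph is still $2$-cell embedded are slightly more scrupulous than the paper's one-line assertion on these points, but they do not change the argument.
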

\begin{proof}
(a) Let $\Gr:=(\Vr,\Er)$ be a tree and  $\Gr_{\Wr}:=(\Wr,\Er_{\Wr})$ be a
finite  induced subgraph of $\Gr$. Clearly $|\Er_\Wr| \leq |\Wr|-1$. Therefore, every tree is $2$-sparse.

We treat the cases (b) and (c) simultaneously.
Let $\Gr:=(\Vr,\Er)$ be a graph which is connected $2$-cell
embedded in $S_g$ with $g\geq 0$ (as remarked above planar graphs correspond to $g=0$). Let $\Gr_{\Wr}:=(\Wr,\Er_{\Wr})$ be a
finite  induced subgraph of $\Gr$ which, clearly, also admits a $2$-cell
embedding  into $S_g$. The statement is  clear for $|\Wr|\leq 2$. Assume
$|\Wr|\geq 3$. Let $\Fr_{\Wr}$ be the faces induced by $\Gr_{\Wr}:=(\Wr,\Er_{\Wr})$ in $S_{g}$.
Here, all faces (even the outer one) contain at least $3$ edges, each edge belongs only to $2$ faces, thus,
\[
2|\Er_{\Wr}| \geq 3 |\Fr_{\Wr}|.
\]
Euler's formula,
$|\Wr|-|\Er_{\Wr}|+|\Fr_{\Wr}|=2-2g$,
gives then
\[
 2-2g+ |\Er_{\Wr}| =|\Wr|+|\Fr_{\Wr}|  \leq |\Wr| + \frac{2}{3} |\Er_{\Wr}|\]
 that is
\[|\Er_{\Wr}| \leq 3|\Wr| +6(g-1) \leq \max(2g+1,3) |\Wr|.\]
This concludes the proof.
\end{proof}

Next, we explain how to construct sparse graphs from
existing sparse graphs.

\begin{lem}\label{l:sums_of_graphs}
 Let $\Gr_1:=(\Vr_{1},\Er_1)$ and $\Gr_2:=(\Vr_{2},\Er_2)$ be two graphs.
 \begin{itemize}
   \item [(a)] Assume $\Vr_{1}=\Vr_{2}$, $\Gr_1$ is $k_1$-sparse and  $\Gr_2$
 is $k_2$-sparse. Then, $\Gr:=(\Vr,\Er)$ with $\Er:=\max(\Er_1, \Er_2)$ is $(k_1+k_2)$-sparse.
   \item [(b)]  Assume $\Gr_1$ is $k_1$-sparse and  $\Gr_2$
 is $k_2$-sparse. Then $\mathcal{G}_1\oplus \mathcal{G}_2:=(\Vr, \Er)$ with where $\Vr:=\Vr_1\times \Vr_2$ and
\[ \Er\left((x_{1},x_{2}),
  (y_1,y_2)\right):= \delta_{\{x_{1}\}}(y_1)\cdot\Er_2(x_{2},y_2) + \delta_{\{x_{2}\}}(y_2)\cdot\Er_1(x_{1},y_1),\]
  is $(k_1+k_2)$-sparse.
   \item [(c)] Assume $\Vr_{1}=\Vr_{2}$, $\Gr_1$ is $k$-sparse and $\Er_{2}\leq \Er_{1}$. Then, $\Gr_{2}$ is $k$-sparse.
 \end{itemize}
\end{lem}
\begin{proof}For (a) let $\Wr\subseteq \Vr$ be finite and note that $|\Er_{\Wr}|\leq|\Er_{1,\Wr}|+|\Er_{2,\Wr}|$. For (b) let $p_{1}$, $p_{2}$ the canonical projections from $\Vr$ to $\Vr_{1}$ and $\Vr_{2}$. For finite $\Wr\subseteq \Vr$ we observe $$|\Er_{\Wr}|=|\Er_{1,p_{1}(\Wr)}|+|\Er_{2,p_{2}(\Wr)}| \leq k_{1}|p_{2}(\Wr)|+ k_{2}|p_{1}(\Wr)|\leq(k_{1}+k_{2})|\Wr|.$$
For (c) and $\Wr\subseteq \Vr$ finite, we have
$|\Er_{2,\Wr}|\leq|\Er_{1,\Wr}|$ which yields the statement.
\end{proof}

\begin{rem}
(a) We point out that there are bi-partite graphs which are  not
sparse. See for example \cite[Proposition~4.11]{Gol2} or take an
antitree, confer \cite[Section~6]{KLW},  where the number of vertices
in the spheres grows monotonously to $\infty$.
\\
(b) The last point of the lemma states that the $k$-sparseness  is
non-decreasing when we remove  edges from the graph. This is not the
case for the isoperimetric constant.
\end{rem}

\subsection{Examples of almost-sparse and {$(a,k)$}-sparse graph}
We construct a series of examples which are perturbations of a radial tree. They illustrate that sparseness, almost sparseness and $(a,k)$-sparseness are indeed  different concepts.

Let   $\beta=(\beta_{n})$, $\gamma=(\gm_{n})$ be two sequences of natural numbers.
Let $\Tr=\Tr(\be)$ with $\Tr=(\Vr,\Er^\Tr)$ be a radial tree with root $o$ and
vertex degree $\be_{n}$ at the $n$-th sphere,
that is every vertex which has natural graph distance $n$ to $o$ has
$(\be_{n}-1)$ forward neighbors. We denote the distance spheres by
$S_{n}$. We let $\Gr(\be,\gm)$ be the set of graphs
$\Gr:=(\Vr,\Er^{\Gr})$ that are super graphs of $\Tr$ such that the
induced subgraphs $\Gr_{S_{n}}$ are $\gm_{n}$-regular and
$\Er^{\Gr}(x,y)=\Er^{\Tr}(x,y)$ for $x\in S_{n},y\in S_{m}$, $m\neq
n$.

Observe that $\Gr(\be,\gm)$ is non empty {if and only if}
$\gm_{n}\prod_{j=0}^{n}(\be_{j}-1)$ is even and
$\gm_{n}<|S_{n}|=\prod_{j=0}^{n}(\be_{j}-1)$ for all $n\ge0$.

\begin{figure}[!ht]
\scalebox{1.6}[0.6]{\includegraphics{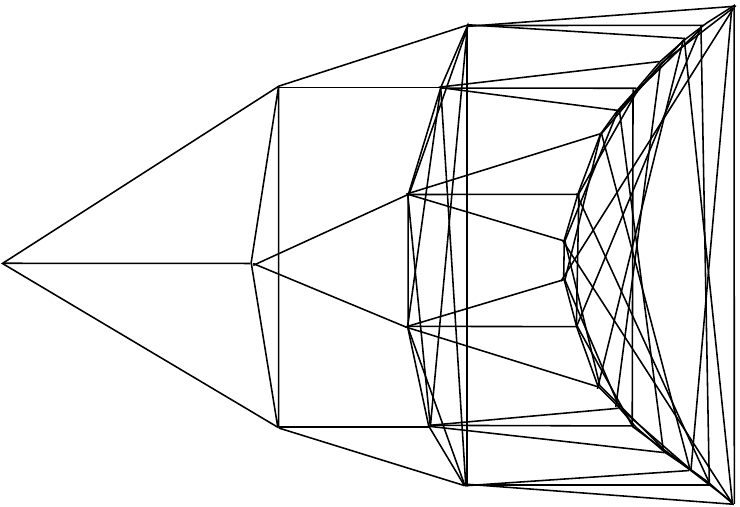}}
\caption{\label{f:example} $\Gr$ with $\be=(3,3,4,\ldots)$ and $\gm=(0,2,4,5,\ldots)$.}
\end{figure}

\begin{pro}
Let $\be,\gm\in\N_{0}^{\N_{0}}$, $a=\limsup_{n\to\infty}{\gm_{n}}/{\be_{n}}$ and
$\Gr\in\Gr(\be,\gm)$.
\begin{itemize}
  \item [(a)] If $a=0$, then $\Gr$ is almost sparse. The graph $\Gr$ is sparse if and only if $\limsup_{n\to\infty}\gm_{n}<\infty $.
  \item [(b)] If $a>0$, then $\Gr$ is $(a',k)$-sparse  for some $k\ge0$ if $a'>a$. Conversely, if $\Gr$ is $(a',k)$-sparse  for some $k\ge0$, then $a'\geq a$.
\end{itemize}
\end{pro}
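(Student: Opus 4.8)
The plan is to compute directly the edge count of an induced subgraph $\Gr_{\Wr}$ for $\Gr\in\Gr(\be,\gm)$ and compare it with the defining inequality of $(a',k)$-sparseness. The crucial structural observation is that any graph in $\Gr(\be,\gm)$ is a tree $\Tr$ (which is $2$-sparse by Lemma~\ref{lem:planar=sparse}(a)) together with the additional ``horizontal'' edges inside each sphere $S_n$, where the induced subgraph on $S_n$ is $\gm_n$-regular. Thus for a finite $\Wr\subseteq\Vr$ I would split the edges of $\Gr_{\Wr}$ into tree edges and sphere edges, writing
\begin{align*}
2|\Er_{\Wr}| = 2|\Er^{\Tr}_{\Wr}| + \sum_{n\ge 0} 2|\Er^{S_n}_{\Wr}|,
\end{align*}
where $\Er^{S_n}_{\Wr}$ denotes the horizontal edges of $\Gr_{S_n}$ contained in $\Wr$. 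The tree part is controlled by $2|\Er^{\Tr}_{\Wr}|\le 2|\Wr|$, while the sphere part, by $\gm_n$-regularity, satisfies $2|\Er^{S_n}_{\Wr}|\le\gm_n|\Wr\cap S_n|$. This reduces everything to controlling $\sum_n\gm_n|\Wr\cap S_n|$ in terms of $|\partial\Wr|$ and $|\Wr|$.

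For part (a), the case $a=0$ means $\gm_n/\be_n\to 0$. Here I would prove almost sparseness by showing that for each $\eps>0$ the large spheres contribute with coefficient at most $\eps$: for all but finitely many $n$ we have $\gm_n\le\eps\be_n$, and I must convert the factor $\be_n$ into boundary terms. The key geometric fact for a radial tree is that each vertex in $S_n\cap\Wr$ either has all its $\be_n-1$ forward tree-neighbors in $\Wr$ (contributing to interior degree) or sends some forward tree-edges to the boundary $\partial\Wr$. I would argue that $\be_n|\Wr\cap S_n|$ is comparable to the boundary and interior degree originating at $S_n$, so that $\sum_{n\text{ large}}\gm_n|\Wr\cap S_n|\le\eps(|\partial\Wr|+\deg(\Wr))$-type bounds emerge; the finitely many small spheres get absorbed into a constant $k_\eps|\Wr|$. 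The sparseness-versus-almost-sparseness dichotomy ($\limsup\gm_n<\infty$) then follows because bounded $\gm_n$ makes the whole horizontal contribution $O(|\Wr|)$, giving $a=0$ genuinely, whereas unbounded $\gm_n$ forces a genuine dependence on the boundary, precluding a pure $k$-sparse bound.

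For part (b), the case $a>0$: the ``if'' direction ($a'>a$) follows by the same decomposition, choosing $N$ so that $\gm_n\le a'\be_n$ for $n\ge N$, bounding the tail horizontal edges by $a'$ times boundary-plus-interior degree contributions and absorbing the first $N$ spheres into $k|\Wr|$. The converse ($a'\ge a$ necessarily) is the part I expect to be the main obstacle: I must exhibit, for each $n$, a test set $\Wr_n$ on which the ratio $2|\Er_{\Wr_n}|/(|\partial\Wr_n|+|\Wr_n|)$ is forced to be close to $\gm_n/\be_n$, so that sparseness with parameter $a'$ demands $a'\ge\gm_n/\be_n$ in the limit. The natural candidate is to take $\Wr_n$ to be a full subtree truncated at level $n$ together with enough of $S_n$ to realize the $\gm_n$-regular horizontal structure; the delicate point is estimating $|\partial\Wr_n|$ precisely, since the boundary consists of the forward tree-edges leaving $S_n$, and one must verify that the horizontal edge count $\tfrac12\gm_n|S_n|$ dominates in the right ratio against this boundary of size roughly $(\be_n-1)|S_n|$. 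Getting these cardinalities to pin down $\gm_n/\be_n$ in the limit is the computational heart of the argument.
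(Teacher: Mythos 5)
Your treatment of the forward directions (almost sparseness when $a=0$, $(a',k)$-sparseness when $a'>a$, and plain $k$-sparseness when $\limsup_n\gm_n<\infty$) is essentially the paper's argument in different packaging: the paper starts from $2|\Er^{\Gr}_{\Wr}|+|\partial^{\Gr}\Wr|=\deg^{\Gr}(\Wr)=\deg^{\Tr}(\Wr)+\sum_{n}\gm_n|\Wr\cap S_n|$, bounds the tail by $(a+\eps)\deg^{\Tr}(\Wr)$ and the finitely many initial spheres by $C_\eps|\Wr|$, and then uses $\deg^{\Tr}(\Wr)=2|\Er^{\Tr}_{\Wr}|+|\partial^{\Tr}\Wr|\le 2|\Wr|+|\partial^{\Gr}\Wr|$; you split the edge count first and then invoke the same identity on the tree, which amounts to the same computation. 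Your ``key geometric fact'' about forward neighbors is precisely the identity $\deg^{\Tr}(\Wr)=2|\Er^{\Tr}_{\Wr}|+|\partial^{\Tr}\Wr|$, so that part is sound, though you should make sure the $\eps$-term on the right is expressed through $\deg^{\Tr}$ (or $|\partial^{\Gr}\Wr|+|\Wr|$) and not through $\deg^{\Gr}$, which would reintroduce the horizontal edges you are trying to bound.

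The genuine gap is in the two necessity statements: that unbounded $\gm_n$ precludes sparseness, and that $(a',k)$-sparseness forces $a'\ge a$. For both you only gesture at a witness, and the test set you propose --- a truncated subtree together with part of $S_n$ --- is harder to analyze than necessary and is explicitly left uncomputed. It drags in the tree edges of the ball and the horizontal edges of all lower spheres, whose total you would then have to show is negligible against $\gm_n|S_n|$; that is not automatic without extra growth assumptions on $|S_m|$ and $\gm_m$. The paper's choice is simply $\Wr=S_n$: a single sphere contains no tree edges, so $2|\Er_{S_n}|=\gm_n|S_n|$ exactly, and $|\partial^{\Gr}S_n|=\be_n|S_n|$ exactly, since every tree edge at a vertex of $S_n$ leaves $S_n$. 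Then $k$-sparseness yields $\gm_n\le k$, which settles the ``only if'' in (a), and $(a',k)$-sparseness yields $\gm_n|S_n|\le k|S_n|+a'\be_n|S_n|$, hence $\gm_n/\be_n\le k/\be_n+a'$ and $a\le a'$ in the limit. Replacing your candidate set by $S_n$ makes the ``computational heart'' you were worried about disappear.
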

\begin{proof}
Let $\eps>0$ and let $N\ge0$ be so large that
\begin{align*}
    \gm_{n}\leq (a+\eps)\be_{n},\quad n\ge N.
\end{align*}
Set $C_\varepsilon :=\sum_{n=0}^{N-1}\deg^\Gr(S_{n})$.
Let $\Wr$ be a non-empty finite subset of
$\Vr$. We calculate
 \begin{align*}
 2 |\Er^\Gr_{\Wr}| + |\partial^\Gr \Wr|
 &=  \deg^\Gr(\Wr)=\deg^\Tr(\Wr) + \sum_{n\geq 0}|\Wr\cap S_n| \gm_n\\
 & \leq  \deg^\Tr(\Wr) + (a+\ve)\sum_{n\geq 0}|\Wr\cap S_n| \be_{n} + \sum_{n=0}^{N-1}|\Wr\cap S_n|\gm_n \\
 &\leq (1+a+\ve) \deg^\Tr(\Wr) +  C_\varepsilon |\Wr|\\
&= 2(1+a+\ve) |\Er^\Tr_{\Wr}| + (1+a+\ve) |\partial^\Tr \Wr|+ C_\ve |\Wr| \\
&\leq (2(1+a+ \ve)+ C_\ve) |\Wr| + (1+a+\ve) |\partial^\Tr \Wr|,
\end{align*}
where we used that trees are
$2$-sparse in the last inequality. Finally, since $ |\partial^\Gr \Wr| \geq |\partial^\Tr
\Wr|$, we conclude
\[
 2|\Er^\Gr_{\Wr}| \leq \left(2(1+a+\ve)+ C_\ve\right) |\Wr| +
 (a+\ve)|\partial^\Gr \Wr|.
\]
This shows that the graph in (a) with $a=0$ is almost sparse and that the graph in (b) with $a>0$ is $(a+\eps,k_{\eps})$-sparse for $\eps>0$ and $k_{\eps}=2(1+a+\ve)+ C_\ve$.  Moreover, for the other statement of  (a) let $k_{0}=\limsup_{n\to\infty}\gm_{n} $ and note that for $\Gr_{S_{n}}$
\begin{align*}
    2|\Er_{S_{n}}|= \gm_{n} |S_{n}|.
\end{align*}
Hence, if  $k_{0}=\infty$, then $\Gr$ is not sparse. On the other hand, if $k_{0}<\infty$, then $\Gr$ is $(k_{0}+2)$-sparse by
 Lemma~\ref{l:sums_of_graphs} as $\Tr$ is 2-sparse by Lemma~\ref{lem:planar=sparse}. This finishes the proof of (a). Finally, assume that $\Gr$ is $(a',k)$-sparse with $k\ge0$. Then, for $\Wr=S_{n}$
 \begin{align*}
    \gm_{n}|S_{n}|=2|\Er_{S_{n}}|\leq k|S_{n}|+a'|\partial^{\Gr} S_{n}|=k|S_{n}|+a'\be_{n}|S_{n}|
 \end{align*}
 Dividing by $\beta_{n}|S_{n}|$ and taking the limit yields $a\leq a'$.  This proves (b).
\end{proof}

\begin{rem}
In (a), we may suppose alternatively that we have the complete graph
on $S_n$ and the following exponential growth $\lim_{n\to \infty}
\frac{|S_{n}|}{|S_{n+1}|}=0$.
\end{rem}


\appendix
\section{Some general operator theory}
We collect some consequences  of standard results from functional analysis that are used in the paper. Let $H$ be a Hilbert space with norm $\|\cdot\|$. For a quadratic form $Q$, denote the form norm by $\|\cdot\|_{Q}:=\sqrt{Q(\cdot)+\|\cdot\|^{2}}$. The following  is a direct consequence of the Closed Graph Theorem, (confer e.g. \cite[Satz 4.7]{We}).

\begin{thm}\label{t:imt} Let $(Q_{1},\Dc(Q_{1}))$ and $(Q_{2},\Dc(Q_{2}))$ be closed non-negative quadratic forms with a common form core $\Dc_{0}$.
Then, the following are equivalent:
\begin{itemize}
  \item [(i)]  $\Dc(Q_{1})\leq\Dc(Q_{2})$.
  \item [(ii)] There are constants $c_{1}>0$, $c_{2}\ge0$ such that $c_{1}Q_{2}-c_{2}\leq Q_{1}$ on $D_{0}$.
\end{itemize}
\end{thm}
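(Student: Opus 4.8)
The plan is to prove Theorem~\ref{t:imt} as a direct consequence of the Closed Graph Theorem applied to the identity map between the two form Hilbert spaces. First I would introduce, for $j=1,2$, the Hilbert space $\Hc_{j}$ which is the completion of $\Dc_{0}$ with respect to the form norm $\|\cdot\|_{Q_{j}}=\sqrt{Q_{j}(\cdot)+\|\cdot\|^{2}}$; since $\Dc_{0}$ is a form core for both closed nonnegative forms, $\Hc_{j}$ is canonically identified with the form domain $\Dc(Q_{j})$ equipped with its form norm. The equivalence to be shown relates the set-theoretic inclusion of form domains (statement~(i), which I read as $\Dc(Q_{1})\subseteq\Dc(Q_{2})$) to the two-sided-free comparison of forms in statement~(ii).

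The direction (ii)$\Rightarrow$(i) is the easy one and I would dispatch it first by a direct estimate: if $c_{1}Q_{2}-c_{2}\leq Q_{1}$ on $\Dc_{0}$, then for every $\ph\in\Dc_{0}$ one has $c_{1}\|\ph\|_{Q_{2}}^{2}=c_{1}Q_{2}(\ph)+c_{1}\|\ph\|^{2}\leq Q_{1}(\ph)+(c_{2}+c_{1})\|\ph\|^{2}\leq \max(1,c_{2}+c_{1})\,\|\ph\|_{Q_{1}}^{2}$, so the identity on $\Dc_{0}$ extends to a bounded map $\Hc_{1}\to\Hc_{2}$, whence $\Dc(Q_{1})\subseteq\Dc(Q_{2})$.

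For the main direction (i)$\Rightarrow$(ii) the key step is to invoke the Closed Graph Theorem. Consider the identity map $\iota:\Hc_{1}\to\Hc_{2}$ defined a priori only on the common core; since $\Dc(Q_{1})\subseteq\Dc(Q_{2})$, this map is everywhere defined on $\Hc_{1}$. I would verify that its graph is closed: if $\ph_{n}\to\ph$ in $\Hc_{1}$ and $\iota\ph_{n}\to\psi$ in $\Hc_{2}$, then both sequences converge in the ambient space $H$ (because $\|\cdot\|\leq\|\cdot\|_{Q_{j}}$), forcing $\ph=\psi$ in $H$ and hence $\psi=\iota\ph$. By the Closed Graph Theorem $\iota$ is bounded, so there is $C>0$ with $\|\ph\|_{Q_{2}}^{2}\leq C\|\ph\|_{Q_{1}}^{2}$ for all $\ph\in\Dc_{0}$, that is $Q_{2}(\ph)+\|\ph\|^{2}\leq C\,Q_{1}(\ph)+C\|\ph\|^{2}$. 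Rearranging yields $\tfrac{1}{C}Q_{2}(\ph)-\tfrac{C-1}{C}\|\ph\|^{2}\leq Q_{1}(\ph)$, which is (ii) with $c_{1}=1/C$ and $c_{2}=(C-1)/C$.

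The main obstacle is a conceptual rather than computational one: ensuring that the completions $\Hc_{j}$ genuinely coincide with the form domains $\Dc(Q_{j})$ and that the identity map is well defined between them. This is exactly where closedness of the forms and the common core hypothesis are used, guaranteeing that limits taken in the form norm correspond to genuine elements of the respective form domains and that the two notions of convergence are compatible through the embedding into $H$. Once this identification is in place, the Closed Graph Theorem does all the work and the extraction of the explicit constants is routine algebra.
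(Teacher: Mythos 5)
Your proposal is correct and follows essentially the same route as the paper: the easy direction by noting that the form comparison makes the identity bounded (equivalently, maps $\|\cdot\|_{Q_1}$-Cauchy sequences to $\|\cdot\|_{Q_2}$-Cauchy sequences), and the main direction by applying the Closed Graph Theorem to the identity map $(\Dc(Q_1),\|\cdot\|_{Q_1})\to(\Dc(Q_2),\|\cdot\|_{Q_2})$, with closedness of the graph checked through the common embedding into $H$. Your verification of closedness and the extraction of the constants $c_1,c_2$ are in fact spelled out more carefully than in the paper's own two-line argument.
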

\begin{proof}If (ii) holds, then any $\|\cdot\|_{Q_{1}}$-Cauchy sequence is a $\|\cdot\|_{Q_{2}}$-Cauchy sequence. Thus, (ii) implies (i). On the other hand,  consider the identity map $j:(\Dc(Q_{1}),\|\cdot\|_{Q_{1}})\to (\Dc(Q_{2}),\|\cdot\|_{Q_{2}})$. The map $j$ is closed as it is defined on the whole Hilbert space $(\Dc(Q_{1}),\|\cdot\|_{Q_{1}})$ and, thus, bounded by the Closed Graph Theorem  \cite[Theorem~III.12]{RS} which implies (i).
\end{proof}

For a selfadjoint operator $T$ which is bounded from below, we denote the
bottom of the  spectrum by $\lm_{0}(T)$ and the bottom of the essential
spectrum by $\lm_{0}^{\mathrm{ess}}(T)$. Let $n(T)\in \N_{0}\cup\{\infty\}$ be the dimension of the range of the spectral projection of $(-\infty,\lm_{0}^{\mathrm{ess}}(T))$.  For $\lm_{0}(T)<\lm_{0}^{\mathrm{ess}}(T)$ we denote the eigenvalues below $\lm_{0}^{\mathrm{ess}}(T)$  by $\lm_{n}(T)$, for $0  \leq n \leq n(T)$,  in increasing  order counted with multiplicity.

\begin{thm}\label{t:minmax} Let $(Q_{1},\Dc(Q_{1}))$ and $(Q_{2},\Dc(Q_{2}))$ be closed non-negative quadratic forms with a common form core $\Dc_{0}$  and let $T_{1}$ and $T_{2}$ be the corresponding selfadjoint operators.
Assume there are constants $c_{1}>0$, $c_{2}\in\R$ such that on $\Dc_{0}$
\begin{align*}
    c_{1}Q_{2}-c_{2}\leq Q_{1}.
\end{align*}
Then,  $c_{1}\lm_{n}(T_{2})-c_{2}\leq \lm_{n}(T_{1})$, for $0 \leq n
\leq \min(n(T_1), n(T_2))$. Moreover,
$c_{1}\lm_{0}^{\mathrm{ess}}(T_{2})-c_{2}\leq
\lm_{0}^{\mathrm{ess}}(T_{1})$, in particular,
$\si_{\mathrm{ess}}(T_{1})=\emptyset$ if
$\si_{\mathrm{ess}}(T_{2})=\emptyset$ and in this case
      \begin{align*}
        c_{1}\leq\liminf_{n\to\infty} \frac{\lm_{n}(T_{1})}{\lm_{n}(T_{2})}.
      \end{align*}
\end{thm}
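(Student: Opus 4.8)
The plan is to derive every assertion from the variational (min--max) characterization of the spectrum, after first upgrading the hypothesis from the core $\mathcal{D}_{0}$ to the whole form domain of $Q_{1}$. First I would observe that the assumed inequality on $\mathcal{D}_{0}$ already forces the domain inclusion $\mathcal{D}(Q_{1})\subseteq\mathcal{D}(Q_{2})$ together with the inequality $c_{1}Q_{2}(\phi)-c_{2}\|\phi\|^{2}\le Q_{1}(\phi)$ for \emph{all} $\phi\in\mathcal{D}(Q_{1})$. Indeed, given $\phi\in\mathcal{D}(Q_{1})$, choose $\phi_{m}\in\mathcal{D}_{0}$ with $\phi_{m}\to\phi$ in $\|\cdot\|_{Q_{1}}$; the inequality shows that $(\phi_{m})$ is also $\|\cdot\|_{Q_{2}}$-Cauchy, so $\phi\in\mathcal{D}(Q_{2})$ because both forms are closed, and passing to the limit preserves the inequality. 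This is exactly the implication (ii)$\Rightarrow$(i) of Theorem~\ref{t:imt}, whose proof supplies the limiting argument.

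Next I would invoke the min--max principle in its form version. For a non-negative closed form $Q$ with associated self-adjoint operator $T$, set
\[
\mu_{n}(T):=\inf_{\substack{V\subseteq \mathcal{D}(Q)\\ \dim V=n+1}}\ \sup_{\substack{\phi\in V\\ \phi\neq 0}}\frac{Q(\phi)}{\|\phi\|^{2}}.
\]
By the standard min--max theorem (see e.g.\ \cite[Theorem~XIII.1]{RS}) one has $\mu_{n}(T)=\lambda_{n}(T)$ whenever $n<n(T)$, and $\mu_{n}(T)=\lambda_{0}^{\mathrm{ess}}(T)$ for $n\ge n(T)$; in all cases the sequence $(\mu_{n}(T))_{n}$ is non-decreasing with $\lim_{n\to\infty}\mu_{n}(T)=\lambda_{0}^{\mathrm{ess}}(T)$.

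The heart of the argument is then a short comparison. By the domain inclusion, any $(n+1)$-dimensional subspace $V\subseteq \mathcal{D}(Q_{1})$ is an admissible competitor for $Q_{2}$ as well, so the extended inequality yields
\[
\sup_{\phi\in V,\,\phi\neq0}\frac{Q_{1}(\phi)}{\|\phi\|^{2}}\ \ge\ c_{1}\sup_{\phi\in V,\,\phi\neq0}\frac{Q_{2}(\phi)}{\|\phi\|^{2}}-c_{2}\ \ge\ c_{1}\,\mu_{n}(T_{2})-c_{2}.
\]
Taking the infimum over all such $V$ gives $\mu_{n}(T_{1})\ge c_{1}\mu_{n}(T_{2})-c_{2}$ for every $n$. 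Translating back through the dictionary of the previous paragraph, this reads $c_{1}\lambda_{n}(T_{2})-c_{2}\le\lambda_{n}(T_{1})$ for $0\le n\le\min(n(T_{1}),n(T_{2}))$, while letting $n\to\infty$ produces $c_{1}\lambda_{0}^{\mathrm{ess}}(T_{2})-c_{2}\le\lambda_{0}^{\mathrm{ess}}(T_{1})$.

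Finally, if $\sigma_{\mathrm{ess}}(T_{2})=\emptyset$ then $\lambda_{0}^{\mathrm{ess}}(T_{2})=+\infty$, so the last inequality forces $\lambda_{0}^{\mathrm{ess}}(T_{1})=+\infty$, i.e.\ $\sigma_{\mathrm{ess}}(T_{1})=\emptyset$; both operators then have purely discrete, unbounded spectrum and $\lambda_{n}(T_{2})\to\infty$. Dividing $c_{1}\lambda_{n}(T_{2})-c_{2}\le\lambda_{n}(T_{1})$ by $\lambda_{n}(T_{2})$ and taking $\liminf$ gives $c_{1}\le\liminf_{n\to\infty}\lambda_{n}(T_{1})/\lambda_{n}(T_{2})$. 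I expect the only genuinely delicate point to be the bookkeeping in the min--max dictionary, namely keeping straight when $\mu_{n}$ is an honest eigenvalue as opposed to the bottom of the essential spectrum, and matching the index ranges; the analytic content is entirely contained in the closedness/core argument of the first step.
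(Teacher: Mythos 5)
Your proof is correct, and it runs on the same engine as the paper's -- the min--max principle -- but executes it in the dual formulation. The paper uses the sup--inf version restricted to the common core $\Dc_{0}$ (so the comparison inequality is only ever applied to vectors of $\Dc_{0}$, and no extension to the form domain is needed), and it gets the eigenvalue bound by plugging in the first $n$ eigenfunctions of $T_{2}$ as the vectors to be orthogonal to; the statement about the essential spectrum is then obtained ``by a similar argument.'' You instead use the inf--sup version over $(n+1)$-dimensional subspaces of $\Dc(Q_{1})$, which forces you to first upgrade the inequality from $\Dc_{0}$ to all of $\Dc(Q_{1})$ (your closedness argument for this is exactly the (ii)$\Rightarrow$(i) step of Theorem~\ref{t:imt} and is fine). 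What your route buys is uniformity: the bound $\mu_{n}(T_{1})\geq c_{1}\mu_{n}(T_{2})-c_{2}$ holds for every $n$ without distinguishing whether $\mu_{n}$ is an eigenvalue or the bottom of the essential spectrum, so the essential-spectrum inequality and the discreteness transfer drop out by letting $n\to\infty$ rather than by a separate argument; you also never need the eigenfunctions of $T_{2}$ to exist. The cost is the preliminary domain-extension step, which the paper's core-based formulation avoids. One small bookkeeping remark: the index range $0\le n\le\min(n(T_{1}),n(T_{2}))$ in the statement versus your convention $n<n(T)$ is an off-by-one artifact of the paper's indexing of $\lambda_{n}$ up to $n(T)$; your dictionary handles it correctly.
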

\begin{proof}
Letting
\begin{align*}
    \mu_{n}(T)=\sup_{\ph_{1},\ldots,\ph_{n}\in H}\inf_{0\neq\psi\in\{\ph_{1},\ldots,\ph_{n}\}^{\perp}\cap \Dc_0}\frac{\langle T\psi,\psi\rangle}{\langle \psi,\psi\rangle},
\end{align*}
for a selfadjoint operator $T$, we know by the Min-Max-Principle \cite[Chapter XIII.1]{RS} $\mu_{n}(T)=\lm_{n}(T)$ if $\lm_{n}(T)<\lm_{0}^{\mathrm{ess}}(T)$
and $\mu_{n}(T)=\lm_{0}^{\mathrm{ess}}(T)$ otherwise, $n\ge0$.
Assume $n\leq\min\{n(T_{1}),n(T_{2})\}$ and  let $\ph_{0}^{(j)},\ldots,\ph_{n}^{(j)}$ be the eigenfunctions of $T_{j}$ to $\lm_{0}(T_{j}),\ldots,\lm_{n}(T_{j})$ we get
\begin{align*}
c_{1}\lm_{n}(T_{2})-c_{3} &=\inf_{0\neq\psi\in\{\ph_{1}^{(2)},\ldots,\ph_{n}^{(2)}\}^{\perp}\cap \Dc_0}\Big(c_{1}\frac{\langle T_{2}\psi,\psi\rangle}{\langle \psi,\psi\rangle}-c_{3}\Big)\\
&\leq\inf_{0\neq\psi\in\{\ph_{1}^{(2)},\ldots,\ph_{n}^{(2)}\}^{\perp}\cap \Dc_0} \frac{\langle T_{1}\psi,\psi\rangle}{\langle \psi,\psi\rangle}\leq\mu_{n}(T_{1})=\lm_{n}(T_{1})
\end{align*}
This
directly implies the first statement. By a similar argument the  statement about the bottom of the essential spectrum follows, in particular, $\lm_{0}^{\mathrm{ess}}(T_{2})=\infty$ implies $\lim_{n\to\infty}\mu_{n}(T_{1})=\infty$ and, thus,
$\lm_{0}^{\mathrm{ess}}(T_{1})=\infty$. In this
case  $\lm_{n}(T_{2})\to\infty$,
$n\to\infty$, which implies the final statement.
\end{proof}

Finally, we give a lemma which helps us to transform inequalities under form perturbations.

\begin{lemma}\label{l:formperturbation}
Let $(Q_{1},\Dc(Q_{1}))$, $(Q_{2},\Dc(Q_{2}))$ and $(q,\Dc(q))$ be
closed symmetric non-negative quadratic forms with a common form core
$\Dc_{0}$ such that there are $\al\in (0,1)$, $C_{\al}\ge0$ such that
\[q\le
\al Q_{1}+C\]
 on $\Dc_{0}$. If for
$a\in(0,1)$ and $k\ge0$
$$(1-a)Q_{2}-k\leq Q_{1}\quad\mbox{ on }\Dc_{0},$$
 then
 \begin{align*}
         \frac{(1-\al)(1-a)}{(1-\al(1-a))}(Q_{2}-q) -\frac{(1-\al)k+aC_\al}{(1-\al(1-a))}\leq Q_{1}-q,\qquad\mbox{on $\Dc_{0}$}.
\end{align*}
In particular, if $a\to 0^+$, then $(1-\al)(1-a)/(1-\al(1-a))\to
1^-$ and if $\al\to 0^+$, then $(1-\al)(1-a)/(1-\al(1-a))\to (1-a)$.
\end{lemma}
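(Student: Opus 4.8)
The plan is to obtain the conclusion as a single non-negative linear combination of the two hypotheses, with the weights chosen so that $Q_{1}$ survives with coefficient exactly $1$ while the $Q_{2}$- and $q$-coefficients match the asserted inequality.

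First I would recast the two assumptions as non-negativity of two auxiliary forms on $\Dc_{0}$, namely
\[
R_{1}:=Q_{1}-(1-a)Q_{2}+k\ge 0\quad\text{and}\quad R_{2}:=\al Q_{1}+C_{\al}-q\ge 0,
\]
the former being a restatement of $(1-a)Q_{2}-k\le Q_{1}$ and the latter of $q\le \al Q_{1}+C_{\al}$. Because both hold on the common form core $\Dc_{0}$, any combination $sR_{1}+tR_{2}$ with $s,t\ge 0$ is again non-negative on $\Dc_{0}$. Expanding gives
\[
sR_{1}+tR_{2}=(s+t\al)Q_{1}-s(1-a)Q_{2}-t\,q+(sk+tC_{\al}).
\]

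Next I would fix $s$ and $t$ by matching this against the target $Q_{1}-cQ_{2}+(c-1)q+K\ge 0$, which is exactly the rearrangement of $Q_{1}-q\ge c(Q_{2}-q)-K$. The matching conditions $s+t\al=1$, $s(1-a)=c$, and $t=1-c$ form a small linear system whose solution is
\[
s=\frac{1-\al}{1-\al(1-a)},\qquad t=\frac{a}{1-\al(1-a)},
\]
and then automatically $c=s(1-a)=(1-\al)(1-a)/(1-\al(1-a))$ and $K=sk+tC_{\al}=((1-\al)k+aC_{\al})/(1-\al(1-a))$, i.e.\ precisely the constants in the statement. The point worth emphasizing is that $s+t\al=1$ is exactly what makes $Q_{1}$ reappear with coefficient $1$, so that $Q_{1}-q$ is recovered cleanly rather than a multiple of it.

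The only genuine check—and the step I would be most careful about—is admissibility of the weights, i.e.\ $s,t\ge 0$. This holds because $\al,a\in(0,1)$ force $\al(1-a)<1$, so the common denominator $1-\al(1-a)$ is strictly positive while the numerators $1-\al$ and $a$ are non-negative. With $s,t\ge 0$ established, $sR_{1}+tR_{2}\ge 0$ rearranges directly into $Q_{1}-q\ge c(Q_{2}-q)-K$ on $\Dc_{0}$, which is the claim. Finally, the two limiting assertions are immediate substitutions: as $a\to 0^{+}$ one gets $c\to (1-\al)/(1-\al)=1$, and since $1-c=t=a/(1-\al(1-a))>0$ the value $1$ is approached from below; as $\al\to 0^{+}$ one gets $c\to (1-a)$.
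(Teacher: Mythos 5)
Your proof is correct and is essentially the paper's argument in a different packaging: the paper first rewrites the hypothesis on $q$ as $q\le \tfrac{\al}{1-\al}(Q_1-q)+\tfrac{C_\al}{1-\al}$ and then substitutes it into $(1-a)Q_2-k\le Q_1$ after subtracting $(1-a)q$, which amounts to exactly the non-negative combination $sR_1+tR_2$ with your weights $s=\tfrac{1-\al}{1-\al(1-a)}$, $t=\tfrac{a}{1-\al(1-a)}$. Your explicit check that $1-\al(1-a)>0$ (hence $s,t\ge0$) is the same positivity the paper uses implicitly when dividing by that quantity, so no gap.
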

\begin{proof} The assumption on $q$ implies
\begin{align*}
    q\leq \frac{\al}{(1-\al)}(Q_{1}-q)+\frac{{C_\al}}{(1-\al)}.
\end{align*}
We subtract $(1-a)q$ on each side of the lower bound in $(1-a)Q_{2}-k\leq Q_{1}$. Then, we get
\begin{align*}
  (1-a)(Q_{2}-q)-k\leq (Q_{1}-q)+a q\leq
  \frac{1-\al(1-a)}{(1-\al)}(Q_{1}-q)+\frac{a C_\al}{(1-\al)}
\end{align*}
and, thus, the asserted inequality follows.
\end{proof}

\textbf{Acknowledgement}.
MB was partially supported by the ANR project HAB
(ANR-12-BS01-0013-02). SG was partially supported by the ANR project
GeRaSic and SQFT. MK enjoyed the hospitality of Bordeaux University
when this work started. Moreover, MK  acknowledges the financial
support of the German Science Foundation (DFG), Golda Meir Fellowship,
the Israel Science Foundation (grant no. 1105/10 and  no. 225/10)  and
BSF grant no. 2010214.

\end{document}